\documentclass[reqno,11pt]{amsart}
\usepackage[margin=1.1in, footskip=1cm]{geometry}
\usepackage{amsmath, amsthm, amssymb, amsfonts, booktabs}
\usepackage{mathrsfs}
\usepackage{thmtools, thm-restate}
\usepackage{xcolor}
\allowdisplaybreaks
\pdfpagewidth=\paperwidth
\pdfpageheight=\paperheight
\usepackage[initials,nobysame]{amsrefs}

\newtheorem{theorem}{Theorem}[section]
\newtheorem{lemma}[theorem]{Lemma}
\newtheorem{corollary}[theorem]{Corollary}

\theoremstyle{definition}

\theoremstyle{remark}
\newtheorem{remark}[theorem]{Remark}

\numberwithin{equation}{section}

\newcommand{\mmod}[1]{\,\,({\rm{mod}}\,\,#1)}

\def\calX{{\mathcal X}}
\def\calY{{\mathcal Y}}

\def\N{{\mathbb N}}
\def\R{{\mathbb R}}\def\T{{\mathbb T}}
\def\Z{{\mathbb Z}}

\def\grM{{\mathfrak M}}

\newcommand{\hA}{\widehat{A}}

\newcommand{\hC}{\widehat{C}}
\newcommand{\hD}{\widehat{D}}

\newcommand{\hG}{\widehat{G}}
\newcommand{\hH}{\widehat{H}}

\newcommand{\hi}{\hat{i}}

\newcommand{\hL}{\widehat{L}}

\newcommand{\hY}{\widehat{Y}}

\renewcommand{\d}{\,{\rm d}} 

\renewcommand{\epsilon}{\varepsilon}

\newcommand{\<}{\begin{equation}}
\renewcommand{\>}{\end{equation}}

\makeatletter
\@namedef{subjclassname@2020}{\textup{2020} Mathematics Subject Classification}
\makeatother

\begin{document}

\title{Subconvexity of Short $k$-Free Exponential Sums}
\author[Ben Doyle]{Ben Doyle}
\address{Department of Mathematics, Purdue University, 150 N. University Street, West 
Lafayette, IN 47907-2067, USA}
\email{doyle133@purdue.edu}
\subjclass[2020]{11L07, 11N60}
\keywords{Exponential sums, moments of additive functions, van der Corput's method.}
\thanks{}
\date{}

\begin{abstract}
    Let $S_k(\alpha;K)$ denote the exponential sum over $k$-free integers in the short interval $(N-K,N]$. For $s>0$, we prove essentially tight bounds on the $s$-th moments of $S_k(\alpha;K)$ whenever $K \gg N^{\theta_{k,s}+\epsilon}$ for some $\theta_{k,s}<1/2$. As an immediate consequence, we obtain a lower bound for the $L^1$-mean of the M\"obius-twisted exponential sum over short intervals of length at least $N^{0.49685}$. Moreover, we show that further improvements on all of these results would follow immediately from improvements to an $\ell^2$-estimate involving the M\"obius function.

    \vspace{-1cm}
\end{abstract}

\maketitle

\section{Introduction}

For a positive integer $k \geq 2$, let $\mu_k(n)$ denote the indicator function of the $k$-free integers, that is, the integers  divisible by no $k$th powers of primes. This report is concerned with moments of the short exponential sum
\[S_k(\alpha;K) = \sum_{N-K<n \leq N} \mu_k(n)e(n\alpha)\]
where $K \leq N$ and as usual $e(\alpha) := e^{2\pi i \alpha}$. Most existing results on this sum have been concerned with the case $K=N$. Balog and Ruzsa \cite{balog-ruzsa:1998}, following work of Br\"udern et al. \cite{brudern-etal:1998}, proved that
\[\int_0^1 |S_k(\alpha;K)| \d\alpha \asymp K^{\frac{1}{k+1}}\]
when $K=N$ and the implicit constant may depend only on $k$. This was generalized later by Keil \cite{keil:2013} to the family of estimates
\<\label{keil-bds-non-crit-eq}
\int_0^1 |S_k(\alpha;K)|^s \d\alpha \asymp_{k,s} \begin{cases}
    K^{\frac{s}{k+1}} \quad &\text{if } s<1+\frac{1}{k} \\
    K^{s-1} \quad &\text{if } s>1+\frac{1}{k}
\end{cases}
\>
when $K=N$.

A natural escalation is to determine the persistence of this behavior when $K$ is smaller than $N$. Fixing an integer $k\geq 2$ and a positive real number $s\neq 1+\frac{1}{k}$, we say that an exponent $\theta$ is \emph{$(k,s)$-admissible} if \eqref{keil-bds-non-crit-eq} holds for any $K \gg N^{\theta}$, and we define $\theta_{k,s}$ to be the infimum of all $(k,s)$-admissible $\theta$. Here we allow the implicit constant in \eqref{keil-bds-non-crit-eq} to depend on $k,s,$ and $\theta$. The $s=1$ has been examined by Sun \cite{sun:2023}, who shows that $\theta_{2,1} \leq \frac{18}{29}$ and $\theta_{k,1} \leq \frac{k+1}{2k}$ for $k>2$. Our principal goal is the following improvement and generalization of this result.

\begin{theorem}\label{theta-thm}
    Let $k\geq 2$ and $s \neq 1+\frac{1}{k}$. Then one has that
    \[\theta_{k,s} \leq \begin{cases}
        \big(\frac{k+1}{2}\big)\delta_k \quad &\text{if } s<1+\frac{1}{k}, \\
        \big(\frac{1}{2}+\frac{1}{2(s-1)}\big)\delta_k \quad &\text{if } 1+\frac{1}{k}<s < 2, \\
        \delta_k \quad &\text{if } s\geq 2,
    \end{cases}\]
    where $\delta_2 = \frac{105}{317}$ and for $k \geq 3$,
    \<\label{little-delta-def-eq}
    \delta_k = \frac{1}{k+1}\bigg(1-\frac{3k^3-6k^2-k+4}{3k^{5}+15k^{4}+33k^{3}+21k^{2}+14k+10}\bigg) < \frac{1}{k+1}.
    \>
\end{theorem}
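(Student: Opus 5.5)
Our plan is to follow the Br\"udern--Keil strategy. We obtain matching upper and lower bounds for $\int_0^1|S_k(\alpha;K)|^s\,d\alpha$ by splitting $[0,1]$ into major arcs around rationals $a/q$ with $q\le Q$ and complementary minor arcs. The new input is an estimate for short intervals whose quality is governed by $\delta_k$.

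We start from the convolution identity $\mu_k(n)=\sum_{d^k\mid n}\mu(d)$. This gives
\[
S_k(\alpha;K)=\sum_{d}\mu(d)\sum_{(N-K)/d^k<m\le N/d^k} e(d^km\alpha).
\]
We split at $d\le D$ and $d>D$. For small $d$ the inner sum is a geometric series. Near $a/q$ it yields the main term
\[
\frac{K}{\zeta(k)}\,c_k(q)\,\frac{e(\beta N)-e(\beta(N-K))}{2\pi i\beta}\quad\text{plus errors},
\]
where $c_k(q)$ is a singular-series factor satisfying $c_k(q)\asymp q^{-k}$ on average (up to the multiplicative structure in $q$). For large $d$ we swap roles and write the sum over $m\le N/D^k$ with $d$ restricted to an interval of length $\asymp K/(md^{k-1})$. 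This is where short-interval losses appear.

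\textbf{Key step.} The key step is an $L^2$ (more generally $L^\infty$-times-$L^2$) bound for the error term $E(\alpha)=S_k(\alpha;K)-(\text{major arc approximation})$ that is nontrivial when $K\ge N^{\delta_k+\epsilon}$. For $\int|E|^2$ we use Parseval. This reduces the problem to counting $k$-free integers in short intervals and progressions, i.e.\ an $\ell^2$-estimate $\sum_{d\sim D}|\sum \mu(\cdot)|^2$ (the ``$\ell^2$-estimate involving the M\"obius function'' mentioned in the abstract). The exponent $\delta_k$ should come from the best known bound for the distribution of $k$-free numbers in short intervals, $K\gg N^{\delta_k}$. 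For $k\ge3$ this is obtained via van der Corput exponent pairs applied to sums $\sum_m\psi(x/m^k)$. For $k=2$ the special value $105/317$ is the known square-free short interval exponent. We would assume these as the input lemma, giving $\int_0^1|E|^2\ll K^{2-\eta}$ and $\sup|E|$ small on minor arcs once $K\ge N^{\delta_k+\epsilon}$.

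\textbf{Assembling the moments.}
\begin{itemize}
\item For $s\ge2$, the lower bound $K^{s-1}$ comes from the arc $|\alpha|\le 1/K$, where $|S_k|\gg K$. The upper bound comes from interpolating the trivial $L^\infty$ bound with the Parseval bound $\int|S_k|^2\ll K$. This only needs $K\ge N^{\delta_k}$, giving $\theta\le\delta_k$.
\item For $s<1+1/k$, the mass lives on the major arcs with $q\approx K^{1/(k+1)}$. Integrating $|\text{main term}|^s$ gives $\asymp K^{s/(k+1)}$. We need the errors to be negligible up to $q\le K^{1/(k+1)}$ on arcs of width about $q^{-1}K^{-k/(k+1)}$. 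Feeding Hölder with the $L^2$ error bound at this scale costs a factor, requiring $K^{2/(k+1)}\cdot(\text{loss})$, which produces the condition $K\ge N^{(k+1)\delta_k/2}$.
\item For $1+1/k<s<2$, we interpolate between the $s=2$ estimate and the critical structure via Hölder. This gives the exponent $\bigl(\tfrac12+\tfrac1{2(s-1)}\bigr)\delta_k$, which equals $\tfrac{k+1}{2}\delta_k$ at $s=1+1/k$ and $\delta_k$ at $s=2$. This matching at both endpoints serves as a consistency check on the interpolation.
\end{itemize}

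\textbf{Main obstacle.} We expect the main obstacle to be the $\ell^2$ M\"obius/short-interval input together with making the error uniformly small on major arcs with $q$ as large as $K^{1/(k+1)}$. The remaining steps are Hölder bookkeeping.
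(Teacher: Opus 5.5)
There is a genuine gap, and it sits exactly at the step you propose to assume.

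\textbf{The key input is misidentified.} The exponent $\delta_k$ is not the known exponent for $k$-free numbers in short intervals. That problem is the $s=2$ case: by Parseval, $\int_0^1|S_k(\alpha;K)|^2\,d\alpha$ counts the $k$-free $n\in(N-K,N]$. It is known far beyond $\delta_k$. Filaseta--Trifonov allow $K\gg N^{1/(2k+1)+\epsilon}$, and for $k=2$ Pandey goes below $N^{1/5}$, whereas $105/317\approx 0.331$. So your $s\ge 2$ argument (trivial upper bound plus the arc $|\alpha|\le c/K$) is fine, and even gives more than claimed.

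What controls $s<2$ is a different and harder quantity. It is the $\ell^2$-mean over $n$ of the M\"obius tail, not a sum over $d$ of squares:
\[
\sum_{N-K<n\le N}\Bigl|\sum_{\substack{y<d\le z\\ d^k\mid n}}\mu(d)\Bigr|^2\ll Ky^{1-k}+N^{\epsilon}K^{1/k}+N^{\delta_k+\epsilon}.
\]
Expanding the square produces pairs $(d_1,d_2)$ with $[d_1,d_2]^k\mid n$. The $O(1)$ rounding errors summed over all pairs cost about $N^{1/k}$ trivially. Pushing the error below $N^{1/(k+1)}$, which is what $\theta_{k,1}<\frac12$ needs, requires real cancellation in sawtooth sums such as $\sum\psi\bigl(N/(d_1d_2h)^k\bigr)$, including a two-dimensional van der Corput estimate.

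This bound is the paper's main new lemma, and $\delta_k$ is \emph{defined} by its proof. The paper splits by $h=(d_1,d_2)$. Small $h$ is treated by Perron's formula with mean values of $\zeta$ and of Dirichlet polynomials. Middle $h$ uses the two-dimensional van der Corput bound. Large $h$ uses one-dimensional exponent pairs, with the hyperbola method when $k=2$. One then optimizes, using Heath-Brown's VMVT-based pairs for $k\ge3$ and $(\frac{1}{30},\frac{11}{30})$ for $k=2$. Assuming this lemma assumes the substance of the theorem.

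\textbf{The bound you do state is too weak.} The form $\int_0^1|E|^2\ll K^{2-\eta}$ does not suffice. For $s<1+\frac1k$ the target is $K^{s/(k+1)}$, so the part $H_D$ with $d>K^{1/(k+1)}$ must satisfy $\int_0^1|H_D|^2\ll K^{2/(k+1)}$. The display gives $K^{2/(k+1)}+N^{\delta_k+\epsilon}$, and this is precisely where the condition $K\gg N^{(k+1)\delta_k/2}$ comes from. The part $d\le K^{1/(k+1)}$ is then handled blockwise: Fej\'er smoothing with $L^1/L^2$ interpolation for the upper bound, and Sun's major-arc sets for the lower bound. It is not done by a major-arc asymptotic.

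\textbf{The range $1+\frac1k<s<2$ also fails as stated.} H\"older interpolation between the $s=1+\frac1k$ and $s=2$ moments needs both endpoint estimates. It therefore needs the stronger hypothesis $K\gg N^{(k+1)\delta_k/2}$. It also inherits the logarithmic loss at the critical exponent, where only $K^{1/k}(\log K)^{2-1/k}$ is available. So it gives neither $K^{s-1}$ nor the intermediate threshold; agreement at the endpoints is not a derivation.

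The threshold $\bigl(\frac12+\frac{1}{2(s-1)}\bigr)\delta_k=\frac{s}{2(s-1)}\delta_k$ arises from running the split $S_k=h_D+H_D$ directly at exponent $s$. One has $\int_0^1|H_D|^s\le\bigl(\int_0^1|H_D|^2\bigr)^{s/2}\ll K^{s/(k+1)}+N^{s\delta_k/2+\epsilon}$, and one needs $N^{s\delta_k/2}\ll K^{s-1}$. The term $N^{\delta_k(s-1)}K^{(2-s)/(k+1)}$ from the small-$d$ blocks gives a weaker condition.
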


In the particular case $s=1$, Theorem \ref{theta-thm} shows that the expected behavior of the $L^1$-mean occurs in intervals of length shorter than $N^{\frac{1}{2}}$ (though our methods do not encroach much further into this territory; the shortest intervals attained are of length $N^{\frac{1555}{3142}+\epsilon} \approx N^{0.49491}$ in the case $k=4$). The principal improvement which allows the proof of Theorem \ref{theta-thm} to go beyond the square-root barrier is a refined estimate involving the middle part sums
\<\label{c_n-def}
c_n(y,z) := \sum_{\substack{y < d \leq z \\ d^k|n}} \mu(d).
\>
\begin{lemma}\label{c_n-bd-lemma}
    For $k\geq 2$ and $1 \leq y < z\leq N^{\frac{1}{k}}$, one has that
    \<\label{c_n-bd-lemma-eq}
    \sum_{N-K < n \leq N} |c_n(y,z)|^2 \ll Ky^{1-k} + N^\epsilon K^{\frac{1}{k}} + N^{\delta_k+\epsilon},
    \>
    where $\delta_k$ is as defined in \eqref{little-delta-def-eq}.
\end{lemma}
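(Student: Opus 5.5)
Expanding the square, we have
\[
\sum_{N-K<n\le N}|c_n(y,z)|^2=\sum_{y<d_1,d_2\le z}\mu(d_1)\mu(d_2)\,\#\{N-K<n\le N:\ [d_1,d_2]^k\mid n\}.
\]
We bound this crudely by replacing $\mu(d_1)\mu(d_2)$ with $|\mu(d_1)\mu(d_2)|$. Write $g=(d_1,d_2)$ and $d_i=ge_i$, so that $[d_1,d_2]=ge_1e_2$. The count of multiples of $m^k$ in an interval of length $K$ is $K/m^k+O(1)$, but the $O(1)$ error can only be charged when the interval actually contains a multiple.

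The plan is to split according to whether $m=[d_1,d_2]$ is small or large compared with $K^{1/k}$.

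\textbf{Main term.} The terms $K/m^k$ contribute
\[
K\sum_{g}\sum_{e_1,e_2>y/g}g^{-k}e_1^{-k}e_2^{-k}\ll K\sum_g g^{-k}\min\bigl(1,(g/y)^{k-1}\bigr)^2\ll Ky^{1-k}.
\]
This gives the first term of \eqref{c_n-bd-lemma-eq}.

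\textbf{Small moduli.} The $O(1)$ errors coming from pairs with $m\le K^{1/k}$ number at most $\sum_{m\le K^{1/k}}\tau(m)^2\ll N^\epsilon K^{1/k}$. This gives the second term.

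\textbf{Large moduli.} For pairs with $m>K^{1/k}$ the interval contains at most one multiple of $m^k$. The contribution is therefore
\[
\ll N^\epsilon\,\#\{(m,n): K^{1/k}<m\le z^2,\ m\le N^{1/k},\ m^k\mid n,\ N-K<n\le N\}\cdot(\text{divisor weights}),
\]
since $m^k\le n\le N$ forces $m\le N^{1/k}$. Equivalently, we must count integers $n=m^k l$ in $(N-K,N]$. Dyadically localize $m\sim M$ with $K^{1/k}\le M\le N^{1/k}$, so that $l\sim L=N/M^k$. The condition becomes $N-K<m^kl\le N$, that is, $m$ lies within $\approx KM/(kN)\cdot M^{0}$ of $(N/l)^{1/k}$: the fractional part of $(N/l)^{1/k}$ must be $\ll KM/N$.

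Counting such pairs is a lattice-point problem near the curve $x=(N/l)^{1/k}$. Two regimes arise:
\begin{itemize}
\item When $L$ is small (equivalently $M$ close to $N^{1/k}$), we count $l\le L$ trivially, giving $\ll L+KM\cdot(\text{stuff})$.
\item When $L$ is large, we sum over $m$ and count $l$ in an interval of length $K/M^k$, giving $\ll M(1+K/M^k)$.
\end{itemize}
Balancing the trivial bounds $\min(M,N/M^k)$ gives $N^{1/(k+1)}$, which is exactly the square-root barrier $\delta_k=1/(k+1)$.

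To beat this we use the expected count $\sum_l \#\{\|(N/l)^{1/k}\|\le \Delta\}$ with $\Delta=KM/N$. By Erdős–Turán (or the standard Fourier expansion of the $\Delta$-neighbourhood indicator), this count is
\[
L\Delta+\sum_{h\le 1/\Delta}\frac{1}{h}\Big|\sum_{l\sim L}e\bigl(h(N/l)^{1/k}\bigr)\Big|+\frac{L}{H}.
\]
The inner sums are bounded by van der Corput's method: exponent pairs, or the $k$-th derivative test. The phase has $j$-th derivative of size $h N^{1/k}L^{-1/k-j}=hM L^{-j}$. The term $L\Delta=K/M^{k-1}\cdot$ is harmless: it is $\le K^{1/k}$ once $M\ge K^{1/k}$.

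\textbf{Main obstacle.} The crux is optimizing the van der Corput estimates in the critical range $M\approx N^{1/(k+1)}\approx L$, choosing the exponent pair (or number of Weyl–van der Corput differencing steps) as a function of $k$. This should produce the rational saving in \eqref{little-delta-def-eq}. For $k=2$ one would instead invoke a sharper input, e.g.\ a Huxley-type bound for rounding errors $\sum\psi(N/l^{1/2})$, yielding $105/317$.

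If the problem is reduced to $\sum_{l\sim L}\psi\bigl((N/l)^{1/k}\bigr)$-type sums, we apply the best available bound from the literature in that range. We would verify that it gives $\ll N^{\delta_k+\epsilon}$ uniformly for $M$ in the transition range. We would also check that outside that range the trivial bounds already give $\ll N^{\delta_k}+K^{1/k}$.
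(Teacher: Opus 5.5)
Your reduction is correct, and it takes a genuinely different route from the paper's.

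\textbf{How the two routes differ.} The paper sorts pairs by $h=(d_1,d_2)$, drops coprimality, and bounds the dyadic blocks $A(H)$ by combining five estimates: a trivial bound, a Perron/zeta/mean-value estimate, one-dimensional van der Corput in $h$, a hyperbola-method variant (for $k=2$), and two-dimensional van der Corput in $(d_1,d_2)$. It then optimizes with the exponent pair $(\frac{1}{30},\frac{11}{30})$ for $k=2$ and Heath-Brown's pairs for $k\ge 3$. That optimization is where the specific values $\delta_k$ and $\frac{105}{317}$ come from.

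You instead use positivity. Once $[d_1,d_2]^k>K$, each count is $0$ or $1$, and at most $\tau(m^2)\ll N^{\epsilon}$ pairs share a given $m=[d_1,d_2]$. So everything beyond the main term and the small moduli is $\ll N^{\epsilon}\mathcal{N}$, where $\mathcal{N}=\#\{(m,l):m>K^{1/k},\ N-K<m^kl\le N\}$. This is the classical quantity from $k$-free numbers in short intervals. Collapsing the pair structure costs nothing here. The trivial count already gives $N^{1/(k+1)}$, which is where the paper says Sun's tools stop, and any power saving in one-variable sawtooth sums beats it.

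\textbf{What is missing.} You have not supplied that saving, and you misjudge it. This route will not ``produce'' $\delta_k$ or $\frac{105}{317}$, and it needs no Huxley-type or Vinogradov-based input. Van der Corput's estimate $\sum_{a<n\le b}\psi(f(n))\ll(b-a)\lambda^{1/3}+\lambda^{-1/2}$ for $|f''|\asymp\lambda$ suffices.

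For $M\le N^{1/(k+1)}$, sum over $m\sim M$ and write the number of $l$ as $K/m^k-\psi(N/m^k)+\psi((N-K)/m^k)$. The sawtooth sums are $\ll N^{1/3}M^{(1-k)/3}+M^{(k+2)/2}N^{-1/2}$. Taking the minimum with the trivial bound $M$, the error per dyadic block is $\ll N^{1/(k+2)}+N^{1/(2(k+1))}$.

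For $M>N^{1/(k+1)}$, sum over $l\sim L\le N^{1/(k+1)}$ instead. Write the number of $m$ as $B_l-A_l-\psi(B_l)+\psi(A_l)$, with $B_l=(N/l)^{1/k}$ and $A_l=((N-K)/l)^{1/k}$. The sawtooth sums are now $\ll N^{1/(3k)}L^{(k-1)/(3k)}+N^{-1/(2k)}L^{1+1/(2k)}\ll N^{2/(3(k+1))}$.

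The main terms total $\ll\sum_{M\ge K^{1/k}\text{ dyadic}}KM^{1-k}\ll K^{1/k}$. Hence $\mathcal{N}\ll K^{1/k}+N^{1/(k+2)}\log N$. Since $\frac{1}{k+2}<\delta_k$ for every $k\ge 2$ (e.g.\ $\frac14<\frac{105}{317}$), the lemma follows with room to spare.

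\textbf{Two points to fix in the write-up.} First, do not truncate the Erd\H{o}s--Tur\'an sum at $h\le 1/\Delta$ with weights $1/h$. With the second-derivative test, that produces a term of size $(N/K)^{1/2}$, which is useless for small $K$. Instead, truncate each $\psi$-sum at an $H$ optimized independently of $K$, as above. Second, dispose of $K\ge N^{k/(k+1)}$ by the trivial count, which already gives $\mathcal{N}\ll K^{1/k}$. This also avoids the case $K>N/2$, where $N-K$ may be much smaller than $N$.
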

This estimate involves a combination of numerous tools, including applications of bounds of the zeta function and Dirichlet polynomials, the hyperbola method, and both the single- and multi-dimensional van der Corput method. With Lemma \ref{c_n-bd-lemma} in hand, however, it is light work to establish Theorem \ref{theta-thm}. Indeed, one may show that the strength of the bound \eqref{c_n-bd-lemma-eq} is the sole impediment to taking $\theta_{k,s} = 0$. For each $k \geq 2$, let $\Delta_k$ denote the least real number such that for any $\epsilon>0$ and $1 \leq y < z$ one has
    \<\label{Delta-def-eq}
    \sum_{N-K < n \leq N} |c_n(y,z)|^2 \ll Ky^{-1} + N^\epsilon K^{\frac{1}{k}} + N^{\Delta_k+\epsilon}.\>

\begin{theorem}\label{Delta-theorem}
    Let $s \neq 1+\frac{1}{k}$ be a positive real number. Then one has that 
    \[
        \theta_{k,s} \leq \begin{cases}
            \big(\frac{k+1}{2}\big)\Delta_k \quad &\text{if } s<1+\frac{1}{k}, \\
            \big(\frac{1}{2}+\frac{1}{2(s-1)}\big)\Delta_k\quad &\text{if } 1+\frac{1}{k}<s < 2, \\
            \Delta_k \quad &\text{if } s\geq 2.
        \end{cases}
    \]
\end{theorem}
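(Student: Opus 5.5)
We decompose $\mu_k(n)=\sum_{d^k\mid n}\mu(d)$ at a parameter $y$ (with $z=N^{1/k}$), writing
\[S_k(\alpha;K)=\sum_{d\le y}\mu(d)\sum_{\substack{N-K<n\le N\\ d^k\mid n}}e(n\alpha)+\sum_{N-K<n\le N}c_n(y,N^{1/k})e(n\alpha)=:A(\alpha)+E(\alpha).\]

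\textbf{The error term.} By Parseval and \eqref{Delta-def-eq},
\[\int_0^1|E(\alpha)|^2\d\alpha\ll Ky^{-1}+N^\epsilon K^{1/k}+N^{\Delta_k+\epsilon}.\]
For $s\le 2$ we pass to $L^s$ by H\"older, giving $\|E\|_s\le\|E\|_2$. For $s>2$ we use $\|E\|_\infty\le\|S_k\|_\infty+\|A\|_\infty\ll K$ to obtain
\[\int_0^1|E|^s\d\alpha\le (CK)^{s-2}\|E\|_2^2 .\]

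\textbf{The main term.} The function $A(\alpha)$ is a truncated Keil-type model: a sum over $d\le y$ of geometric series of length $K/d^k$ concentrated near rationals $a/d^k$. We redo Keil's (or Balog--Ruzsa's) analysis for this model in the short interval. The expected outputs are:
\begin{itemize}
\item for $s>1+1/k$, the main contribution comes from the major arc at $0$ and its neighbours, giving $\int_0^1|A|^s\asymp K^{s-1}$ as soon as $y\to\infty$;
\item for $s<1+1/k$, arcs with $d\approx K^{1/(k+1)}$ dominate, giving $\int_0^1|A|^s\asymp K^{s/(k+1)}$ provided $y\ge K^{1/(k+1)}$ (upper bound by summing over $d$, lower bound from disjointness of the relevant arcs).
\end{itemize}
We then recover \eqref{keil-bds-non-crit-eq} for $S_k$ by the triangle inequality in $L^s$, or the quasi-triangle inequality $|a+b|^s\le|a|^s+|b|^s$ when $s<1$, provided $\|E\|_s=o(\|A\|_s)$.

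\textbf{Deriving the exponents.} Set $K=N^\theta$.
\begin{itemize}
\item \emph{Case $s<1+1/k$.} We take $y=K^{1/(k+1)}$ (times a large constant). Then $Ky^{-1}=K^{k/(k+1)}$, and we need $\|E\|_2\ll K^{1/(k+1)}$. This is not implied by $Ky^{-1}$ alone, so we handle $Ky^{-1}$ by a separate $L^s$ bound for $s<1+1/k$, since the corresponding tail is itself Keil-type. What remains is the condition $N^{\Delta_k/2}\ll K^{1/(k+1)}$, i.e.\ $\theta>\frac{k+1}{2}\Delta_k$, which matches the claim. The $K^{1/k}$ term is harmless after H\"older with the main term in the form $\|E\|_s^s\le\|E\|_1^{\,\cdot}\cdots$. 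Concretely, I would interpolate the error between $L^1$ and $L^2$ rather than using bare $L^2$.
\item \emph{Case $s\ge2$.} We need $K^{s-2}N^{\Delta_k}=o(K^{s-1})$, i.e.\ $\theta>\Delta_k$. The term $Ky^{-1}$ is $o(K)$ once $y\to\infty$, and $K^{1/k}=o(K)$.
\item \emph{Case $1+1/k<s<2$.} We interpolate: $\|E\|_s\le\|E\|_1^{1-\lambda}\|E\|_2^{\lambda}$, using the trivial $\|E\|_1\ll\|E\|_2$ combined with the sup bound $K$. Requiring $\|E\|_s^s\ll K^{s-1-\eta}$, the balance of $N^{\Delta_k}$ against $K^{(s-1)\cdots}$ should produce $\theta>\bigl(\frac12+\frac1{2(s-1)}\bigr)\Delta_k$. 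At $s=2$ this gives $\Delta_k$, and at $s=1+1/k$ it gives $\frac{k+1}{2}\Delta_k$, consistent with the other cases.
\end{itemize}

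\textbf{Main obstacle.} The hard part is establishing the two-sided model bounds for $A(\alpha)$ uniformly in short intervals with the chosen truncation $y$. In particular, the lower bound for $s<1+1/k$ requires showing that the arcs around $a/d^k$ with $d\sim K^{1/(k+1)}$ are essentially disjoint and each carries mass $\asymp (K/d^k)^{s-1}$. I would adapt Keil's argument directly, since it only uses $K$ as the length of the geometric sums.
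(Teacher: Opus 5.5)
Your treatment of $s>1+\frac1k$ is essentially sound, and for $s\ge2$ it is simpler than the paper's. You combine the reverse Minkowski inequality $\|S_k\|_s\ge\|A\|_s-\|E\|_s$ with $\|E\|_s^s\le(CK)^{s-2}\|E\|_2^2=o(K^{s-1})$, valid once $K\gg N^{\Delta_k+\epsilon}$ and $y\to\infty$, and with $|A(\alpha)|\gg K$ for $|\alpha|<1/(100K)$. This gives the lower bound directly and makes the mixed-moment Lemma \ref{hH-mixed-moment-lemma} unnecessary.

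For $1+\frac1k<s<2$ no interpolation is needed. The bound $\|E\|_s\le\|E\|_2$ suffices, since $N^{s\Delta_k/2}\ll K^{s-1}$ is exactly $\theta>\frac{s}{2(s-1)}\Delta_k=\bigl(\frac12+\frac{1}{2(s-1)}\bigr)\Delta_k$. Here, however, $y\to\infty$ is not enough, because you also need $(Ky^{1-k})^{s/2}\ll K^{s-1}$. So $y$ must be a power of $K$; the paper takes $K^{1/(k+1)}$. Then $\int_0^1|A|^s\ll K^{s-1}$ no longer comes from the arc at $0$ alone. It needs the weighted dyadic H\"older argument, interpolating Lemma \ref{T_i-bounds-lemma} at exponent $1+\delta$ with the $L^2$ bound.

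The genuine gap is the case $s<1+\frac1k$.

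\emph{The $Ky^{-1}$ term.} Your device here is not a valid step. \eqref{Delta-def-eq} is a single bound on $W(y,z)$ and cannot be split. Keil-type bounds such as $\int_0^1|T_i|^s\ll\hi K^{s-1}$ grow with $\hi$, so they cannot control a tail running up to $N^{1/k}$. The paper in fact uses \eqref{Delta-def-eq} with diagonal term $Ky^{1-k}$ (compare Lemma \ref{c_n-bd-lemma} and Corollary \ref{L2-T_i,H_i-bounds-lemma}; the two forms agree only for $k=2$). That is what produces $\frac{k+1}{2}\Delta_k$.

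\emph{The lower bound.} More seriously, even with $Ky^{1-k}$, a cut at $y\asymp K^{1/(k+1)}$ leaves $\|E\|_2$ of the same order $K^{1/(k+1)}$ as the main term. So $\|E\|_s=o(\|A\|_s)$ is unavailable. Taking $y=CK^{1/(k+1)}$ only moves the difficulty: you would need $\|A\|_1\gg K^{1/(k+1)}$ uniformly in $C$, yet the added moduli $K^{1/(k+1)}<d\le CK^{1/(k+1)}$ themselves carry $L^2$-mass of order $K^{1/(k+1)}$. Nor can you use arcs around $a/d^k$ with $d\asymp K^{1/(k+1)}$, contrary to your plan. They have total measure $\asymp1$, and the single-modulus size $K/d^k\asymp K^{1/(k+1)}$ does not dominate there.

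The missing idea, used in Lemma \ref{Ls-lower-subcrit-lemma}, is localization. Keep the cut at $K^{1/(k+1)}$ and integrate only over $\calY=\bigcup_{d\le\epsilon'K^{1/(k+1)}}\calY_d$, a set of measure $\ll(\epsilon')^{k+1}$. On $\calY$ the truncated sum contributes $\gg\epsilon'K^{1/(k+1)}$. Cauchy--Schwarz bounds the tail there by $|\calY|^{1/2}\|E\|_2\ll(\epsilon')^{(k+1)/2}K^{1/(k+1)}$. Since $\frac{k+1}{2}>1$, a small $\epsilon'$ wins.

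\emph{The upper bound.} Summing over $d$ already loses a factor $\log K$ at $s=1$. The paper removes it with the Fej\'er-type splitting $T_i=Q_i-R_i$ and H\"older with weights $(D-i)^s$, interpolating $\int_0^1|Q_i|\ll\hi(D-i)$ against $\int_0^1|Q_i|^2\ll K\hi^{1-k}$. The $N^{\epsilon}K^{1/k}$ term needs no interpolation at all, since $K^{1/(2k)}\le K^{1/(k+1)}$.
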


\noindent This is a useful observation, and not entirely trivial. In particular, the lower bound case when $s\geq 2$ requires a more careful argument involving to show the mixed moment bound of Lemma \ref{hH-mixed-moment-lemma}. Thus further improvements to Theorem \ref{theta-thm} follow immediately from improved bounds of the form \eqref{Delta-def-eq}.

\begin{remark}\label{pandey-remark}
    By orthogonality, the special case $s=2$ of our investigation equates to studying the prolifery of the estimate
    \<\label{(s=2)-asymp-eq}
    \#\{n \in (N-K,N]:\; n \;\text{is } k\text{-free}\} \asymp K
    \>
    when $K=N^{\theta}$ for $\theta<1$. Our definitions then have that the exponent $\theta_{k,2}$ is the minimal exponent for which \eqref{(s=2)-asymp-eq} holds whenever $K \gg N^{\theta_{k,2}+\epsilon}$. This is a classical problem, and considerably stronger results exist for this special case: Filaseta and Trifonov \cite{filaseta-trifonov:1996} have shown that $\theta_{k,2} \leq \frac{1}{2k+1}$, and in the case $k=2$ Pandey \cite{pandey:2024} has recently improved this to $\theta_{2,2} \leq \frac{1}{5}-\eta$ for some $\eta>0$ which is not made explicit. 
    
    One motivation for the results on $\theta_{k,2}$ is the problem of bounding gaps between $k$-free integers. It therefore follows from Theorem \ref{Delta-theorem} that a bound of the form \eqref{Delta-def-eq} with $\Delta_2 < \frac{1}{5}-\eta$ would directly imply better gap results, though the bounds proven in here are still quite far from such an improvement.
\end{remark}

The case $s=1+\frac{1}{k}$ has hitherto been absent in our discussion. Keil \cite{keil:2013} has proven the bounds
\<\label{keil-crit-bd-eq}
N^{\frac{1}{k}}\log N \ll \int_0^1 |S_k(\alpha;N)|^{1+\frac{1}{k}} \d\alpha \ll N^{\frac{1}{k}}(\log N)^2
\>
for all $k$, and it is suspected that the lower bound is the correct order of magnitude; the analogous lower bound in the function field setting is known to be accurate by recent work of the author \cite{doyle:2026}. We prove the following short interval result, which in the case $K=N$ improves mildly on the upper bound in \eqref{keil-crit-bd-eq}.

\begin{theorem}\label{crit-thm}
    Let $\Delta_k$ be defined as in \eqref{Delta-def-eq}. For $\epsilon>0$ and for $K \gg N^{(\frac{k+1}{2})\Delta_k+\epsilon}$, one has that
    \<\label{crit-thm-eq}
    K^{\frac{1}{k}}\log K \ll \int_0^1 |S_k(\alpha;K)|^{1+\frac{1}{k}} \d\alpha \ll K^{\frac{1}{k}}(\log K)^{2-\frac{1}{k}}.
    \>
\end{theorem}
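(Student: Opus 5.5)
The plan is to follow Keil's decomposition for $K=N$, with the interval length $K$ in place of $N$, and to use the hypothesis $K\gg N^{(\frac{k+1}{2})\Delta_k+\epsilon}$ only to control the error coming from \eqref{Delta-def-eq}. Write $\mu_k(n)=\sum_{d^k\mid n}\mu(d)$ and fix a parameter $y$, to be chosen as a small power of $K$ (roughly $y\asymp K^{1/(k+1)}$ up to logarithmic factors). Split
\[
S_k(\alpha;K)=A(\alpha)+B(\alpha)+C(\alpha),
\]
where $A$ collects the contribution of $d\le y$, $C$ that of $d> z$ (large $d$, handled by switching roles so that the cofactor $m=n/d^k$ is small), and $B(\alpha)=\sum_{N-K<n\le N}c_n(y,z)e(n\alpha)$ is the middle part. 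The main term is
\[
A(\alpha)=\sum_{d\le y}\mu(d)\sum_{(N-K)/d^k<m\le N/d^k}e(md^k\alpha).
\]
For $\alpha$ near $a/q$ this is a sum of geometric series, giving a major-arc profile of size roughly $\sum_{d\le y,\, q\mid d^k}\mu(d)\,\min(K/d^k,\|d^k\alpha\|^{-1})$.

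For the lower bound, I would take $s=1+\frac1k$ and restrict to small neighbourhoods of rationals $a/q$ with $q$ up to about $K^{k/(k+1)}$, and show that $|A(\alpha)|$ is large there, as in Keil. The point is that, summed over $q$, the major-arc integrals of $|A|^{1+1/k}$ behave like $\sum_q q^{-1}\cdot K^{1/k}$-type quantities, producing $K^{1/k}\log K$. The perturbation $B+C$ is removed with the triangle inequality together with Hölder. By Parseval, $\int_0^1|B|^2=\sum_n|c_n(y,z)|^2$, which is $\ll Ky^{-1}+N^\epsilon K^{1/k}+N^{\Delta_k+\epsilon}$ by \eqref{Delta-def-eq}. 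The hypothesis $K\gg N^{(\frac{k+1}{2})\Delta_k+\epsilon}$ is exactly what makes $N^{\Delta_k}$ at most $K^{2/(k+1)-\epsilon'}$. This should make the $L^{1+1/k}$ contribution of $B$ (via interpolation between $L^2$ and the trivial bound) a power smaller than $K^{1/k}$. The same approach should also dispose of $C$, using the analogous mean-square estimate for the tail, which I expect to be essentially a special case of the $c_n$ bound with $z=N^{1/k}$.

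For the upper bound, the goal is the saving $(\log K)^{2-1/k}$ rather than $(\log K)^2$. I would bound $\int|A|^{1+1/k}$ by a dyadic decomposition in $d$ and in the distance $\|\alpha-a/q\|$, and apply Hölder across the $\ll\log K$ dyadic ranges of $d$ more efficiently than Keil does. Concretely, write $A=\sum_j A_j$ over dyadic blocks $d\sim D_j$ and use
\[
\Bigl|\sum_j A_j\Bigr|^{1+1/k}\le J^{1/k}\sum_j|A_j|^{1+1/k},\qquad J\asymp\log K.
\]
Each block should contribute $\ll K^{1/k}(\log K)^{1-1/k}\cdot J^{-1}\cdot(\text{something})$, or more plausibly each block gives $K^{1/k}$ times $\log K$ from the sum over $q$. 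The total would then be $J^{1/k}\cdot J\cdot K^{1/k}$, which is too big. So I would instead aim to show that the blocks are essentially disjointly supported in $\alpha$ (different $q$-ranges dominate for different $D_j$). That gives $|A|^{1+1/k}\approx\sum_j|A_j|^{1+1/k}$ up to a loss of $(\log K)^{1-1/k}$ coming from Hölder on the overlap.

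I expect this sharpened bookkeeping of the $\log$ powers in the upper bound to be the main obstacle. The error terms $B,C$ are routine given \eqref{Delta-def-eq} and the stated threshold on $K$.
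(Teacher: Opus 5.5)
Your lower-bound outline is essentially the paper's argument. Split $S_k=h_D+H_D$ at $d\le\hD=K^{1/(k+1)}$; your $B+C$ is exactly $H_D$, a single middle-part sum with $z=N^{1/k}$. Remove $H_D$ with the $L^2$ bound from \eqref{Delta-def-eq}, and integrate $|h_D|^{1+1/k}$ over arcs of width $\asymp K^{-1}$ about $a/q^k$, $q$ squarefree, where $|h_D|\gg K/q^k$; then $\sum_q\mu^2(q)\varphi(q)/q^2$ produces the $\log K$.

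Three details need adjusting. First, the denominators must be $k$-th powers of squarefree $q$; at other denominators $h_D$ is not of size $K/\text{denominator}$. Second, they must be restricted to a small power of $K$, not taken up to $K^{k/(k+1)}$. This is needed both for disjointness of the arcs and to keep the incomplete-period error $O(q^kK^{1/(k+1)})$ below the main term $K/q^k$. Third, the tail is not a power smaller. With $y=K^{1/(k+1)}$ and the main term $Ky^{1-k}$ (as in Lemma \ref{c_n-bd-lemma} and Corollary \ref{L2-T_i,H_i-bounds-lemma}, not the $Ky^{-1}$ you quoted), you only get $\int_0^1|H_D|^{1+1/k}\d\alpha\ll K^{1/k}$ and $\int_0^1|H_D||h_D|^{1/k}\d\alpha\ll(K\log K)^{1/k}$. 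This suffices only because of the $\log K$ margin.

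The genuine gap is the upper bound, which is exactly where the theorem improves on Keil. You correctly see that symmetric H\"older over the $J\asymp\log K$ blocks $T_j$ costs $J^{1/k}$. Each block satisfies only
\[
\int_0^1|T_j|^{1+\frac1k}\d\alpha\ll\Big(\int_0^1|T_j|\d\alpha\Big)^{1-\frac1k}\Big(\int_0^1|T_j|^2\d\alpha\Big)^{\frac1k}\ll K^{\frac1k}(\log K)^{1-\frac1k}
\]
(not $K^{1/k}\log K$), so this reproduces Keil's $(\log K)^2$.

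Your remedy, essential disjointness of the supports of the blocks, is not established, and it is unclear that it holds in any usable form. The geometric series $\sum_m e(md^k\alpha)$ peak at every $b/d^k$. So each $a/q^k$ with $q$ small is a peak for constituents of every block at scale $2^j\ge q$. Any separation would have to come from cancellation in $\mu(d)$, which you do not analyse. Nor does your count close: with your guess of $K^{1/k}\log K$ per block, disjointness gives $(\log K)^2$, and a further ``loss of $(\log K)^{1-1/k}$ from the overlap'' only makes it worse. So no argument for the exponent $2-\frac1k$ is actually supplied.

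The missing idea is to decompose only one of the two factors of $|h_D|^{1+1/k}$. The paper writes
\[
\int_0^1|h_D|^{1+\frac1k}\d\alpha\le\sum_{j<D}\int_0^1|T_j|\,|h_D|^{\frac1k}\d\alpha,
\]
which costs only the linear factor $D\asymp\log K$. It then splits $|h_D|^{1/k}\ll|h_j|^{1/k}+|\sum_{j\le i<D}T_i|^{1/k}$ and uses three-factor H\"older:
\[
\int_0^1|T_j||h_j|^{\frac1k}\d\alpha\le\Big(\int_0^1|T_j|^2\d\alpha\Big)^{\frac1k}\Big(\int_0^1|T_j|\d\alpha\Big)^{1-\frac2k}\Big(\int_0^1|h_j|\d\alpha\Big)^{\frac1k},
\]
\[
\int_0^1|T_j|\Big|\sum_{j\le i<D}T_i\Big|^{\frac1k}\d\alpha\le\Big(\int_0^1|T_j|^2\d\alpha\Big)^{\frac1{2k}}\Big(\int_0^1|T_j|\d\alpha\Big)^{1-\frac1k}\Big(\int_0^1\Big|\sum_{j\le i<D}T_i\Big|^2\d\alpha\Big)^{\frac1{2k}}.
\]
The inputs are as follows. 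Both $\int_0^1|T_j|\d\alpha$ and $\int_0^1|h_j|\d\alpha$ are $\ll 2^j\log K$; the latter holds because $\sum_{i<j}2^i$ is geometric. Both $L^2$ integrals are $\ll K2^{j(1-k)}$, by the middle-part estimate with $y=2^j\le K^{1/(k+1)}$ and the hypothesis on $K$, with no logarithmic loss on this term. The powers of $2^j$ then cancel exactly, and each piece is $\ll K^{1/k}(\log K)^{1-1/k}$. Summing over $j<D$ gives $K^{1/k}(\log K)^{2-1/k}$. This is the rigorous substitute for your disjointness heuristic, which, combined with the correct per-block bound, would indeed have given the same exponent.
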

\begin{corollary}\label{crit-corollary}
    For $k \geq 2$ and $\epsilon>0$, the equation \eqref{crit-thm-eq} holds for all $K \gg N^{(\frac{k+1}{2})\delta_k+\epsilon}$, where $\delta_k$ is as defined in \eqref{little-delta-def-eq}.
\end{corollary}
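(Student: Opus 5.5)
This follows immediately by combining Theorem \ref{crit-thm} with Lemma \ref{c_n-bd-lemma}. The plan is to show that $\Delta_k \le \delta_k$. Once that holds, any $K \gg N^{(\frac{k+1}{2})\delta_k+\epsilon}$ also satisfies $K \gg N^{(\frac{k+1}{2})\Delta_k+\epsilon'}$ for some $\epsilon'>0$. We can then apply Theorem \ref{crit-thm}, whose implicit constants depend only on $k$ and $\epsilon$, to obtain \eqref{crit-thm-eq}.

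To show $\Delta_k \le \delta_k$, we compare \eqref{c_n-bd-lemma-eq} with \eqref{Delta-def-eq}.

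\emph{The main term.} For $y \ge 1$ and $k \ge 2$ we have $Ky^{1-k} \le Ky^{-1}$, so the main term of \eqref{c_n-bd-lemma-eq} is dominated by that of \eqref{Delta-def-eq}.

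\emph{The range of $z$.} Lemma \ref{c_n-bd-lemma} is stated only for $z \le N^{1/k}$, whereas \eqref{Delta-def-eq} allows arbitrary $z > y$. For $n \le N$, any $d$ with $d^k \mid n$ satisfies $d \le N^{1/k}$. Hence $c_n(y,z) = c_n(y,\min(z,N^{1/k}))$. If $y \ge N^{1/k}$, then $c_n(y,z)=0$ and the bound is trivial. Otherwise we apply the lemma with $z$ replaced by $\min(z,N^{1/k})$.

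\emph{Minimality.} Since $\Delta_k$ is defined as the least admissible exponent, the bound \eqref{Delta-def-eq} holds with $\delta_k$ in place of $\Delta_k$, and therefore $\Delta_k \le \delta_k$. If there were any concern about the infimum being attained, one could avoid it entirely. Theorem \ref{crit-thm} only uses \eqref{Delta-def-eq} as an input bound, so we can rerun its proof with the inequality \eqref{c_n-bd-lemma-eq} supplied directly.

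No step here is a genuine obstacle. The only point needing care is the bookkeeping of the $\epsilon$'s: the $N^{\epsilon}$ losses in \eqref{c_n-bd-lemma-eq} must be absorbed into the $\epsilon$ margin in the hypothesis on $K$, by choosing the auxiliary $\epsilon$ in the lemma smaller than the one in the corollary.
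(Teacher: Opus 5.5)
Your proposal is correct and follows the paper's own route exactly: Lemma \ref{c_n-bd-lemma} gives $\Delta_k\le\delta_k$, after which Theorem \ref{crit-thm} applies verbatim. One small point: the proof of Theorem \ref{crit-thm} uses the main term in the stronger form $Ky^{1-k}$ (through Corollary \ref{L2-T_i,H_i-bounds-lemma}), not the $Ky^{-1}$ printed in \eqref{Delta-def-eq}, so what matters is that \eqref{c_n-bd-lemma-eq} has exactly this $Ky^{1-k}$ term, not the inequality $Ky^{1-k}\le Ky^{-1}$ --- your fallback of feeding \eqref{c_n-bd-lemma-eq} directly into that proof is the robust way to phrase it.
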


We refrain from labeling the exponent $\big(\frac{k+1}{2}\big)\delta_k$ as $(k,1+\frac{1}{k})$-admissible, since we do not know the correct order of magnitude to expect for this moment.

Subconvexity results of this sort are ripe for application to problems involving character sums and averages of arithmetic functions, as discussed in the case $K=N$ by Wooley \cite{wooley:2026}. We mention here only a single example, which was a motivating factor in the original work of Balog and Ruzsa \cite{balog-ruzsa:2001}.

\begin{theorem}
    Let $\epsilon>0$. One has that
    \<\label{mobius-lwr-bd-eq}
    \int_0^1 \bigg|\sum_{N-K < n \leq N} \mu(n)e(n\alpha)\bigg|\d\alpha \gg K^{\frac{1}{6}}
    \>
    whenever $K \gg N^{(\frac{3}{2})\Delta_2+\epsilon}$, where $\Delta_2$ is defined by \eqref{Delta-def-eq}.
\end{theorem}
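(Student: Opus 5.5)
The plan is to follow the Balog--Ruzsa argument for $K=N$, where a lower bound for the $L^1$-mean of the M\"obius sum comes from the $L^1$-mean of the squarefree sum. I will write $M(\alpha)=\sum_{N-K<n\leq N}\mu(n)e(n\alpha)$ and exploit the identity $\mu_2(n)=\mu(n)^2$, i.e.\ $\mu_2(n)=\mu(n)\cdot\mu(n)$. The key observation is that multiplying coefficients pointwise corresponds to additive convolution on the Fourier side. For $n$ in the interval we have
\[
\mu_2(n)=\mu(n)^2=\int_0^1 M(\beta)\,\Big(\sum_{N-K<m\leq N}\mu(m)e(-m\beta)\Big)\cdots
\]
This form is awkward, so I will instead use the cleaner route: $\mu_2(n)=\mu(n)\lambda(n)$ is false in general. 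The relation I actually use is $\mu_2(n)=\mu(n)\mu(n)$ together with the bound $|\mu(n)|\le 1$. Concretely, I would set
\[
S_2(\alpha;K)=\sum_{N-K<n\le N}\mu(n)\cdot \mu(n)e(n\alpha),
\]
and write it as $\int_0^1 M(\alpha-\beta)\,\widehat{w}(\beta)\,d\beta$, where $w(n)=\mu(n)\mathbf 1_{(N-K,N]}(n)$. This form does not give an $L^1$ bound directly.

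Balog and Ruzsa's actual mechanism is different. For any sequence $a_n$ supported on an interval, with coefficients multiplied by a bounded sequence $b_n$, one has
\[
\Big\|\sum a_nb_ne(n\alpha)\Big\|_{L^1}\le \|a\|_{A}\cdot\Big\|\sum b_n e(n\alpha)\Big\|_{L^1}
\]
whenever $a_n$ lies in the Wiener algebra, with $\|a\|_A$ its Fourier-$\ell^1$ norm. Since $\mu(n)$ itself is not in an algebra, this route is useless too. I will therefore use the duality/Hölder route, which is the one that fits the paper's results.

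The main step is the following. By orthogonality,
\[
\#\{\text{squarefree } n\in(N-K,N]\}=\sum\mu(n)^2=\int_0^1 M(\alpha)\overline{M(\alpha)}\,d\alpha .
\]
This only sees the $L^2$ norm, so the useful identity is instead the mixed one
\[
\sum_{N-K<n\le N}\mu(n)^2=\int_0^1 M(\alpha)\,\overline{S_2(\alpha;K)}\cdots,
\]
which fails because $\mu\cdot\mu_2=\mu$, not $\mu^2$. Indeed $\sum_n \mu(n)\mu_2(n)=\sum_n\mu(n)$, which is small. The identity that does hold is
\[
\sum_{N-K<n\le N}|\mu(n)|^2=\int_0^1 M(\alpha)\overline{M(\alpha)}\,d\alpha\le \sup_\alpha|M(\alpha)|\int_0^1|M(\alpha)|\,d\alpha .
\]
Combining this with $\sup_\alpha|M(\alpha)|\le K$ yields only $\int_0^1|M|\gg 1$, which is too weak.

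So the correct plan is Hölder's inequality with the $(k,s)$ moments at $k=2$. I will take $s>2$ close to $2$. The estimate \eqref{keil-bds-non-crit-eq} gives $\int|S_2|^s\asymp K^{s-1}$ for $K\gg N^{\Delta_2+\epsilon}$ via Theorem \ref{Delta-theorem}, and similarly the $s<3/2$ case gives $\int|S_2|\asymp K^{1/3}$ for $K\gg N^{\frac32\Delta_2+\epsilon}$. The remaining task is to transfer the bound from $S_2$ to $M$. For this I will use the convolution structure on the integer side, $\mu_2(n)=\sum_{d^2\mid n}\mu(d)$, which gives $S_2(\alpha;K)$ as a sum over $d$ of $\mu(d)$ times an unweighted geometric sum. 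This structure explains the $K^{1/3}$ but does not involve $M$.

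The honest route, following Balog--Ruzsa, is the trilinear identity
\[
\int_0^1\!\!\int_0^1 M(\alpha)M(\beta)\overline{S_2(\alpha+\beta)}\,d\alpha\, d\beta=\sum_{n+m\in(\cdot)}\mu(n)\mu(m)\mu_2(n+m).
\]
I will replace it by the bilinear
\[
\int_0^1 |M(\alpha)|^2\, \overline{F(\alpha)}\,d\alpha,
\]
where $F$ has nonnegative Fourier coefficients. I expect this transfer from the squarefree $L^1$ bound to the M\"obius $L^1$ bound to be the main obstacle. My concrete first attempt is to show that $\sum\mu(n)^2 e(n\alpha)=S_2$ and then use
\[
\int_0^1|S_2(\alpha;K)|\,d\alpha\le\int_0^1\!\!\int_0^1|M(\beta)||M(\alpha-\beta)|\,d\beta\, d\alpha=\Big(\int_0^1|M|\Big)^2 .
\]
This follows from $S_2(\alpha)=\int_0^1 M(\beta)\,M'(\alpha-\beta)\,d\beta$, where $M'$ is the same sum, because $\mu(n)^2$ is the diagonal of the convolution. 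The diagonal restriction is handled by a smooth majorant, since both sums live on the same interval. The lower bound $\int_0^1|S_2|\gg K^{1/3}$ from Theorem \ref{Delta-theorem} with $s=1$ then gives $\int_0^1|M|\gg K^{1/6}$, which is \eqref{mobius-lwr-bd-eq}.
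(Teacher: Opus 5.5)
Your final paragraph is the paper's proof. The paper writes $S_2(\alpha;K)=\int_0^1\overline{G(\beta)}G(\alpha+\beta)\,d\beta$, a correlation rather than your convolution $\int_0^1 M(\beta)M(\alpha-\beta)\,d\beta$, which is an immaterial difference; it then bounds $\int_0^1|S_2|$ by $(\int_0^1|G|)^2$ via Fubini and invokes the $s=1$ lower bound $\int_0^1|S_2(\alpha;K)|\,d\alpha\gg K^{1/3}$ from Theorem~\ref{Delta-theorem}. The identity is exact by orthogonality, since convolution multiplies Fourier coefficients and $\mu(n)^2=\mu_2(n)$ on the same interval, so no smooth majorant or ``diagonal restriction'' is needed and the earlier detours can simply be deleted.
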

\begin{proof}
    The proof is entirely the same as that of the complete sum; see Theorem 3 of \cite{balog-ruzsa:2001}. Write
    \[G(\alpha) = \sum_{N-K<n \leq N} \mu(n)e(n\alpha).\]
    We have
    \[\int_0^1 \overline{G(\beta)}G(\alpha+\beta)\d\beta = \sum_{N-K<n \leq N} \mu^2(n)e(n\alpha) = S_2(\alpha;K).\]
    It follows by Theorem \ref{Delta-theorem} that
    \[K^{\frac{1}{3}} \ll \int_0^1 |S_2(\alpha;K)| \d\alpha \leq \int_0^1 \int_0^1 |G(\beta)G(\alpha+\beta)|\d\beta\d\alpha = \bigg(\int_0^1 |G(\alpha)|\d\alpha\bigg)^2. \qedhere\]
\end{proof}

\begin{corollary}\label{mobius-corollary}
    For any $\epsilon>0$, the bound \eqref{mobius-lwr-bd-eq} holds whenever $K \gg N^{\frac{105}{317}+\epsilon} \approx N^{0.49685}$.
\end{corollary}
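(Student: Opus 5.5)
The corollary follows by combining the preceding theorem with Lemma \ref{c_n-bd-lemma}. The point is to show that Lemma \ref{c_n-bd-lemma} gives an admissible bound of the form \eqref{Delta-def-eq} with $\Delta_2 \le \delta_2 = \frac{105}{317}$.

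Two small discrepancies between \eqref{c_n-bd-lemma-eq} and \eqref{Delta-def-eq} need to be handled.

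First, the term $Ky^{1-k}$. For $k=2$ and $y\ge 1$ we have $y^{1-k}=y^{-1}$, so this term is exactly the $Ky^{-1}$ appearing in \eqref{Delta-def-eq}.

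Second, the range of $z$. Lemma \ref{c_n-bd-lemma} requires $z\le N^{1/k}$, whereas \eqref{Delta-def-eq} allows arbitrary $1\le y<z$. For $n\le N$, any $d$ with $d^k\mid n$ satisfies $d\le N^{1/k}$. Hence $c_n(y,z)=c_n(y,\min(z,N^{1/k}))$. If $y\ge N^{1/k}$ the sum is empty, so the bound holds trivially. Otherwise we apply Lemma \ref{c_n-bd-lemma} with $z$ replaced by $\min(z,N^{1/k})$.

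It follows that every exponent $\delta_k$ makes \eqref{Delta-def-eq} hold, so $\Delta_k\le\delta_k$ by the definition of $\Delta_k$ as a least such number. Strictly, the definition as an infimum only gives \eqref{Delta-def-eq} for exponents slightly larger than $\Delta_k$. This is absorbed by the $\epsilon$ in the hypothesis $K\gg N^{(\frac32)\Delta_2+\epsilon}$: one can simply run the preceding theorem's argument with $\delta_2$ in place of $\Delta_2$, which is legitimate since only the validity of \eqref{Delta-def-eq} is used there.

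Now apply the preceding theorem with $k=2$. The bound \eqref{mobius-lwr-bd-eq} holds whenever
\[K\gg N^{\frac32\cdot\frac{105}{317}+\epsilon}.\]

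This yields the exponent $\frac{315}{634}=\frac{3}{2}\cdot\frac{105}{317}\approx 0.49685$, not the exponent $\frac{105}{317}$ as printed in the corollary. Since $\frac{315}{634}\approx 0.49685$ agrees with the decimal value in both the statement and the abstract, the displayed fraction $\frac{105}{317}$ appears to be a typo for $(\frac32)\delta_2$. I would state the result with the exponent $(\frac32)\delta_2$.

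Under that reading, there is no genuine obstacle. The only care needed is the $\epsilon$-bookkeeping and the observation that, for $k=2$, the truncation $z\le N^{1/2}$ costs nothing.
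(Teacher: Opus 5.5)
Your proof is correct and follows the paper's own route. Lemma \ref{c_n-bd-lemma} gives \eqref{Delta-def-eq} with exponent $\delta_2=\frac{105}{317}$; the $y$-term and the range of $z$ are reconciled exactly as you describe, and the preceding theorem then yields \eqref{mobius-lwr-bd-eq} for $K\gg N^{\frac{3}{2}\delta_2+\epsilon}$.

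You are also right that the fraction printed in the corollary is a typo. The number $\frac{105}{317}\approx 0.331$ is $\delta_2$ itself, and the intended exponent is $\frac{3}{2}\cdot\frac{105}{317}=\frac{315}{634}\approx 0.49685$. This agrees with the decimal value given, with the abstract, and with the case $k=2$, $s=1$ of Theorem \ref{theta-thm}.
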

\noindent This was shown by Sun \cite{sun:2023} when $K \gg N^{\frac{9}{17}+\epsilon}$; our improvement is simply due to the stronger bound in Lemma \ref{c_n-bd-lemma}. Two other results are worth noting here:
\begin{itemize}
    \item Prior to \cite{balog-ruzsa:2001}, Balog and Ruzsa \cite{balog-ruzsa:1998} proved the weaker lower bound of $K^{\frac{1}{8}}(\log K)^{-1}$ in the case $K=N$. Sun \cite{sun:2023} observes that this lower bound holds whenever $K \gg N^{\theta_{2,2}+\epsilon}$, and so by the work of Pandey \cite{pandey:2024} we may take $K \gg N^{\frac{1}{5}-\eta}$ (see Remark \ref{pandey-remark}).

    \item The stronger lower bound of $K^{\frac{1}{4}-\epsilon}$ holds for the $L^1$-mean in the case $K=N$ due to Pandey and Radziwi\l\l \;\cite{pandey-radziwill:2023}, by different methods.
\end{itemize}

We begin in \S 2 by introducing an array of preliminary results, most of which are standard. We also introduce some of the machinery used in our arguments, notably a modified Fej\'er kernel and some standard estimates involving the middle part $c_n(y,z)$. In \S3, we handle the subcritical case $s<1+\frac{1}{k}$ of Theorem \ref{Delta-theorem}. In \S4, we tackle the higher moments, giving the proof of Theorem \ref{crit-thm} and completing the proof of Theorem \ref{Delta-theorem}. Finally, \S5 details the proof of Lemma \ref{c_n-bd-lemma}, which then implies the explicit bounds of Theorem \ref{theta-thm} and Corollary \ref{crit-corollary} by Theorems \ref{Delta-theorem} and \ref{crit-thm}, respectively. This section is the most technical of the paper, but much of the proof is similar to the proof of Lemma 3.1 in \cite{sun:2023}.

Throughout, we use $\ll$ and $\gg$ to denote Vinogradov's notation, with implicit constants depending at most on $k$, $s$, $\theta$, and $\epsilon$, unless otherwise stated. We will write $f \asymp g$ when $f \ll g$ and $g \ll f$. For a real number $\theta$, we define $\|\theta\| = \min\{|\theta-t|:t \in \Z\}$ and $\{\theta\} = \theta - \lfloor \theta\rfloor$. Finally, for a real number $A$ we make regular use of the unconventional notation $\hA = 2^A$, in order to make legible the numerous dyadic dissections cluttering our arguments.

\section{Necessary Tools and Preliminary Lemmata}

There are quite a few results of which we make use, which we list here. Many of these results are classical, and so we refer the reader to texts in which proofs may be found.

\begin{lemma}\label{pw-zeta-bd-lemma}
    Fix $\epsilon>0$. Suppose that $\frac{1}{2} \leq \sigma \leq 1+\epsilon$ and $t \geq 1$. Then one has that
    \[\zeta(\sigma+it) = O(t^{\frac{1}{3}(1-\sigma)+\epsilon}).\]
\end{lemma}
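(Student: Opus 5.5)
This is the classical convexity bound for $\zeta(s)$ in the strip $\tfrac12\le\sigma\le 1+\epsilon$. I would obtain it from the Phragmén--Lindelöf principle, interpolating between Weyl's subconvex bound $\zeta(\tfrac12+it)\ll t^{1/6+\epsilon}$ on the critical line and the trivial bound $\zeta(1+\epsilon+it)\ll_\epsilon 1$. Write $\mu(\sigma)$ for the infimum of exponents $a$ with $\zeta(\sigma+it)\ll t^{a}$. The function $\mu$ is convex in $\sigma$, with
\[\mu(\tfrac12)\le \tfrac16 \qquad\text{and}\qquad \mu(1+\epsilon)=0.\]
Convexity then gives, for $\tfrac12\le\sigma\le1+\epsilon$,
\[\mu(\sigma)\le \tfrac16\cdot\frac{1+\epsilon-\sigma}{\tfrac12+\epsilon}\le \tfrac13(1-\sigma)+\epsilon',\]
and this is exactly the claimed exponent $\tfrac13(1-\sigma)+\epsilon$ after renaming $\epsilon$.

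The steps would run in this order. First, record the two endpoint bounds. At $\sigma=1+\epsilon$ the Dirichlet series converges absolutely, so $|\zeta|\le\zeta(1+\epsilon)$. At $\sigma=\tfrac12$ I would use the Hardy--Littlewood approximate functional equation together with van der Corput's second-derivative (or third-derivative) estimate on the dyadic pieces, which gives $t^{1/6}\log t$; this is standard and can be taken from Titchmarsh, Chapter V, or Ivić. Second, apply Phragmén--Lindelöf to $(s-1)\zeta(s)$ on the strip. The factor $(s-1)$ kills the pole, and $\zeta$ has at most polynomial growth there, so the hypotheses of the principle hold. This yields a uniform bound of the form $t^{\frac16\cdot\frac{1+\epsilon-\sigma}{1/2+\epsilon}+\epsilon}$. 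Third, simplify the exponent: since $\frac{1}{6(1/2+\epsilon)}\le\frac13$ and $1+\epsilon-\sigma\le 1-\sigma+\epsilon$, the exponent is at most $\tfrac13(1-\sigma)+\epsilon/3+\epsilon$. Because $\epsilon$ is arbitrary, this finishes the argument.

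The only nontrivial input is the Weyl bound at $\sigma=\tfrac12$, which is classical and which I would cite rather than reprove. If one prefers a self-contained route, there is an alternative that avoids Phragmén--Lindelöf. Apply the approximate formula $\zeta(s)=\sum_{n\le t}n^{-s}+O(t^{-\sigma})$ directly. On each dyadic block $n\sim M$, use partial summation together with van der Corput's bound $\sum_{n\sim M}n^{-it}\ll M^{1/2}t^{1/6}$, valid for $M\ll t^{2/3}$ via the third-derivative test, and the trivial bound otherwise. Either route works, so I do not expect any genuine obstacle here.
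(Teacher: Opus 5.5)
Your main argument is correct and matches the paper's, which simply cites Titchmarsh, Chapter~5: the convexity of $\mu(\sigma)$ (made uniform in $\sigma$ by Phragm\'en--Lindel\"of, as you note) interpolates between the Weyl--Hardy--Littlewood bound $\zeta(\tfrac12+it)\ll t^{1/6}\log t$ (Theorem~5.5 there) and the trivial bound on the line $\sigma=1+\epsilon$. Your optional self-contained route has a slip, though: on the blocks $t^{2/3}<M\le t$ the trivial bound gives only $M^{1-\sigma}$, which reaches $t^{1-\sigma}$, so there you need the second-derivative test (giving $M^{-\sigma}t^{1/2}\le t^{(1-\sigma)/3}$ for $\sigma\ge\tfrac12$), and when $\sigma>\tfrac12$ the blocks $M\le t^{1/3}$ need the trivial bound rather than the third-derivative estimate.
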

\begin{proof}
    See Chapter 5 of \cite{titchmarsh-vol2:1986}, namely the discussion of convexity of $\mu(\sigma)$ along with Theorem 5.5.
\end{proof}

\begin{lemma}\label{L2-zeta-bd-lemma}
    For $T \geq 2$, one has that
    \[\int_{-T}^T |\zeta(1/2+it)|^2 \d t \ll T\log T.\]
\end{lemma}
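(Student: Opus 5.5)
The plan is to obtain the bound from an approximate functional equation for $\zeta$, or equivalently from the Hardy--Littlewood mean value theorem for Dirichlet polynomials; this is the classical second moment estimate. First, for $t\in[1,T]$ I would use the approximate formula
\[
\zeta(1/2+it)=\sum_{n\le t}n^{-1/2-it}+O(t^{-1/2}),
\]
which is Theorem 4.11 of Titchmarsh with $x=t$. This reduces the problem to bounding $\int_{1}^{T}\big|\sum_{n\le t}n^{-1/2-it}\big|^2\d t$. By the symmetry $\zeta(1/2-it)=\overline{\zeta(1/2+it)}$, the range $[-T,-1]$ contributes the same amount. On $[-1,1]$ the pole of $\zeta$ sits at $s=1$, away from the line $\sigma=1/2$, so $\zeta(1/2+it)$ is bounded there and this range contributes $O(1)$.

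To handle the dependence of the summation range on $t$, I would split $[1,T]$ dyadically into intervals $[T_0,2T_0]$. On each such interval I write $\sum_{n\le t}=\sum_{n\le T_0}+\sum_{T_0<n\le t}$; the second piece can be absorbed by using the approximate formula with $x=2T_0$ throughout. (Alternatively, the sum can be smoothed.) This leaves $\int_{T_0}^{2T_0}\big|\sum_{n\le 2T_0}n^{-1/2-it}\big|^2\d t$ to be bounded.

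I then apply the Montgomery--Vaughan mean value theorem,
\[
\int_0^{U}\Big|\sum_n a_n n^{-it}\Big|^2\d t=\sum_n (U+O(n))|a_n|^2 .
\]
With $a_n=n^{-1/2}$ for $n\le 2T_0$ and $U\asymp T_0$, the right-hand side is $\ll \sum_{n\le 2T_0}(T_0+n)/n\ll T_0\log T_0$. The error term $O(t^{-1/2})$ contributes $\ll \log T_0$ after expanding the square and applying Cauchy--Schwarz. Summing $T_0\log T_0$ over dyadic $T_0\le T$ gives $T\log T$.

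The only delicate point is the truncation dependence on $t$ in the approximate formula. Fixing $x=2T_0$ on each dyadic block is legitimate, because Titchmarsh's formula $\zeta(s)=\sum_{n\le x}n^{-s}-x^{1-s}/(1-s)+O(x^{-\sigma})$ holds uniformly for $x\ge t/\pi$. The extra term $x^{1/2-it}/(1/2-it)$ is $O(1)$ on the block and so contributes only $O(T_0)$. Alternatively, the statement is exactly Theorem 7.3 of Titchmarsh (Hardy--Littlewood), and it could simply be cited.
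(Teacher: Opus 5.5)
Your argument is correct, but it differs from the paper, which gives no argument and simply cites Titchmarsh (Theorem 7.2(A)). You instead derive the bound. You use the approximate formula (Titchmarsh, Theorem 4.11) with the truncation frozen at $x=2T_0$ on each dyadic block $[T_0,2T_0]$, followed by the mean value theorem for Dirichlet polynomials.

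Freezing $x$ is the right move. It removes the $t$-dependence of the summation range that a direct computation with $\sum_{n\le t}$ would have to handle. Since you only need an upper bound, you never have to evaluate off-diagonal terms. Even the weaker form $(T+O(N))\sum|a_n|^2$ recorded in the paper as Lemma \ref{dirichlet-poly-bd-lemma} suffices, because $N=2T_0\asymp T_0$.

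The citation buys brevity, and the cited result actually gives the asymptotic $\sim T\log T$. Your route is self-contained apart from Theorem 4.11, and reuses a tool already in the paper.

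A few harmless imprecisions:
\begin{itemize}
\item The extra term $x^{1/2-it}/(1/2-it)$ is in fact $O(T_0^{-1/2})$ on the block, not merely $O(1)$.
\item If you expand the square, the cross term between the Dirichlet polynomial and the $O(T_0^{-1/2})$ remainder is $\ll (T_0\log T_0)^{1/2}$ per block, rather than $\ll\log T_0$. It is negligible either way, and you avoid it entirely with $|A+B|^2\le 2|A|^2+2|B|^2$.
\item Your opening use of $x=t$ is superfluous once you fix $x=2T_0$ on each block.
\end{itemize}
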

\begin{proof}
    This follows from Theorem 7.2(A) of \cite{titchmarsh-vol2:1986}.
\end{proof}

\begin{lemma}[\cite{iwaniec-kowalski:2004}, Theorem 9.1]\label{dirichlet-poly-bd-lemma}
    For $T,N \geq 2$ and any sequence of complex numbers $a_n$, one has that
    \[\int_0^T \bigg|\sum_{1 \leq n \leq N} a_nn^{it}\bigg|^2 \d t = (T+O(N))\sum_{1 \leq n \leq N} |a_n|^2.\]
\end{lemma}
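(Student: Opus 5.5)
The statement just above is the classical mean value theorem for Dirichlet polynomials of Montgomery and Vaughan. The plan is to deduce it from the Hilbert-type inequality
\[
\Big|\sum_{m\neq n} \frac{x_m\overline{x_n}}{\lambda_m-\lambda_n}\Big| \le \frac{\pi}{\delta}\sum_n |x_n|^2,
\]
valid for distinct real frequencies $\lambda_n$ with spacing at least $\delta$. As the paper does, I will cite this inequality from the literature rather than reprove it.

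\textbf{Step 1: expand the square.} Expanding $\big|\sum a_n n^{it}\big|^2$ and integrating term by term gives
\[
\int_0^T \Big|\sum_{n\le N} a_n n^{it}\Big|^2 \d t = T\sum_{n\le N}|a_n|^2 + \sum_{m\neq n} a_m\overline{a_n}\,\frac{(m/n)^{iT}-1}{i\log(m/n)}.
\]
The diagonal terms produce the main term $T\sum|a_n|^2$. It remains to show that the off-diagonal sum is $O(N\sum|a_n|^2)$.

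\textbf{Step 2: split the off-diagonal sum.} The numerator $(m/n)^{iT}-1$ breaks it into two pieces.
\begin{itemize}
\item The $-1$ piece is the bilinear form with coefficients $x_n=a_n$ and frequencies $\lambda_n=\log n$.
\item The $(m/n)^{iT}$ piece is the same bilinear form with twisted coefficients $x_n=a_n n^{iT}$, which have the same squared norm $\sum|a_n|^2$.
\end{itemize}
So one bound for the bilinear form with frequencies $\log n$ handles both pieces.

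\textbf{Step 3: bound the bilinear form.} Crude spacing fails: $|\log m-\log n|\ge 1/N$ together with the uniform Hilbert inequality would only give $O(N\log N)$ after summing. Instead I would use the Montgomery--Vaughan weighted form, which allows non-uniform spacing:
\[
\Big|\sum_{m\neq n}\frac{x_m\overline{x_n}}{\lambda_m-\lambda_n}\Big| \le \frac{3\pi}{2}\sum_n \frac{|x_n|^2}{\delta_n},
\qquad \delta_n=\min_{m\neq n}|\lambda_m-\lambda_n|.
\]
For $\lambda_n=\log n$ we have $\delta_n\ge \log\big(1+\tfrac{1}{n}\big)\gg 1/n$. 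Hence the right-hand side is at most $\ll \sum_n n|a_n|^2\le N\sum|a_n|^2$, which yields exactly the $O(N)$ error term.

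\textbf{Main obstacle.} The only deep input is the weighted Hilbert inequality; everything else is routine. Since this is the standard Theorem 9.1 of \cite{iwaniec-kowalski:2004}, in the paper one simply cites it.
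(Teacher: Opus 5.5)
Your argument is correct and is the standard Montgomery--Vaughan proof behind the cited Theorem~9.1 of Iwaniec--Kowalski; the paper gives no proof of its own and simply quotes that result. One side remark in your Step~3 is inaccurate but harmless. The uniform Hilbert inequality does not fail here: the minimal spacing of the frequencies $\log n$ ($n\le N$) is $\log\frac{N}{N-1}\ge \frac{1}{N}$, so it already bounds each off-diagonal piece by $\pi N\sum|a_n|^2$ (an $N\log N$ loss arises only if you take absolute values termwise), and the weighted version is needed only for the sharper error term $\sum_n O(n)|a_n|^2$.
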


\begin{lemma}\label{Kusmin-Landau-lemma}
    Let $f(x)$ be a real function with $\delta \leq |f'(x)| \leq 1-\delta$ and $f''(x) \neq 0$ on $[a,b]$. Then
    \[\sum_{a < n < b} e(f(n)) \ll \delta^{-1},\]
    where the implicit constant is independent of $f$.
\end{lemma}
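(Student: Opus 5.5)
The plan is to use the classical Kusmin--Landau argument, which works from partial summation applied to a telescoping identity. Write $g(x) = f(x)$; by replacing $f$ with $-f$ or $f$ plus an integer multiple of $x$ (which does not change $e(f(n))$ at integers $n$), we may arrange that $\delta \le f'(x) \le 1-\delta$. Since $f''\neq 0$ on $[a,b]$ and $f'$ is continuous (implicitly $f$ is $C^2$), $f'$ is strictly monotone there. The idea is to write, with $\Delta_n := f(n+1)-f(n)$,
\[
e(f(n)) = \frac{e(f(n+1)) - e(f(n))}{e(\Delta_n)-1}
\]
after isolating $e(f(n))$. Note that $\Delta_n = f'(\xi_n)\in[\delta,1-\delta]$ by the mean value theorem, so $e(\Delta_n)\neq 1$.

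Setting $c_n = 1/(e(\Delta_n)-1)$, one has the standard identity
\[
\frac{1}{e(x)-1} = -\tfrac12 - \tfrac{i}{2}\cot(\pi x).
\]
This gives $|c_n| \le \frac{1}{2|\sin \pi\Delta_n|}\ll 1/\|\Delta_n\| \le \delta^{-1}$. Summing $e(f(n)) = c_n\,(e(f(n+1))-e(f(n)))$ over integers $a<n<b$ (excluding the last term, which is trivially $O(1)$), we apply Abel summation to obtain
\[
\sum_{n} e(f(n)) = \sum_n e(f(n))\,(c_{n-1}-c_n) + O(|c_{\text{first}}|+|c_{\text{last}}|).
\]
The total error is then bounded by $\sum_n |c_{n-1}-c_n| + O(\delta^{-1})$.

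The key step is the monotonicity of the sequence $c_n$. Because $f'$ is monotone, $\Delta_n$ is monotone in $n$ and stays inside $[\delta,1-\delta]$. On $(0,1)$ the function $x\mapsto\cot(\pi x)$ is monotone, so the imaginary parts $-\tfrac12\cot(\pi\Delta_n)$ form a monotone sequence, while the real parts are constant. Hence $\sum_n|c_{n-1}-c_n|$ telescopes to $|c_{\text{last}}-c_{\text{first}}|\le 2\max|c_n| \ll \delta^{-1}$. Combining these estimates yields $\sum e(f(n))\ll\delta^{-1}$ with an absolute constant, independent of $f$.

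The only step requiring care is the reduction to $\delta\le f'\le1-\delta$. The hypothesis $\delta\le|f'|\le 1-\delta$ allows $f'$ to be either positive or negative, but continuity of $f'$ together with $f''\ne0$ ensures that $f'$ keeps one sign. If that sign is negative, we replace $f$ by $-f$, which conjugates the sum. Apart from this, the proof is routine bookkeeping of the boundary terms.
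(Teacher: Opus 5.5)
Your proof is correct. It differs from the paper only in that the paper gives no argument of its own: it quotes Corollary~8.11 of Iwaniec--Kowalski, a weighted estimate for sums $\sum g(n)e(f(n))$, in the special case $g\equiv 1$.

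You instead reproduce the classical Kusmin--Landau argument from scratch:
\begin{itemize}
\item write $e(f(n))=c_n\bigl(e(f(n+1))-e(f(n))\bigr)$ with $c_n=(e(\Delta_n)-1)^{-1}=-\tfrac12-\tfrac{i}{2}\cot(\pi\Delta_n)$;
\item sum by parts;
\item note that $\Delta_n=f'(\xi_n)$ with $\xi_n\in(n,n+1)$, so the $\xi_n$ increase, $\Delta_n$ is monotone, and $\cot(\pi\Delta_n)$ is monotone;
\item conclude that $\sum|c_{n-1}-c_n|$ telescopes.
\end{itemize}
The bookkeeping checks out. Tracking constants even gives $\bigl|\sum_{a<n<b}e(f(n))\bigr|\le\cot(\pi\delta/2)+1$.

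Your route buys three things. It is self-contained, it gives an explicit absolute constant, and it needs slightly weaker hypotheses. You only use that $f'$ is monotone and stays inside some $[m+\delta,m+1-\delta]$, so the result holds under the usual Kusmin--Landau condition of $f'$ monotone with $\|f'\|\ge\delta$. The citation buys the weighted version, which this paper never needs.

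One small correction to your last paragraph. It is continuity of $f'$ together with $|f'|\ge\delta>0$ (via the intermediate value theorem) that forces $f'$ to keep one sign, not $f''\neq0$. The role of $f''\neq 0$ is to give strict monotonicity of $f'$. For that you do not need $f\in C^2$: by Darboux's theorem, a derivative $f''$ that never vanishes has constant sign.
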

\begin{proof}
    This is the case $g(x) = 1$ of Corollary 8.11 in \cite{iwaniec-kowalski:2004}.
\end{proof}

We define the sawtooth function $\psi$ by
\[\psi(x) = \begin{cases}
    \{x\}-\frac{1}{2} \quad &\text{if } x \not \in \Z, \\
    0 \quad &\text{if } x \in \Z.
\end{cases}\]

\begin{lemma}\label{sawtooth-lemma}
    Fix $R>0$. There exist coefficients $\beta(r) \ll |r|^{-1}$ such that
    \[\psi(x) \leq R^{-1} + \sum_{1 \leq |r| \leq R} \beta(r)e(rx).\]
\end{lemma}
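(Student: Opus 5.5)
This is the standard Vaaler/Vinogradov-type one-sided trigonometric approximation of the sawtooth function. We only need an upper bound with an error of size $R^{-1}$ and coefficients decaying like $|r|^{-1}$, so there are two natural routes. One is to quote Vaaler's theorem directly. The other is to build the majorant by hand, by smoothing $\psi$ against a Fejér kernel and adding a correction. We follow the second route and fall back on the first if the bookkeeping becomes heavy.

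First, we may assume $R\ge 1$ is an integer. If $R<1$ the sum is empty, and the claim $\psi(x)\le R^{-1}$ holds trivially because $\psi\le \tfrac12<1\le R^{-1}$. Otherwise we replace $R$ by $\lfloor R\rfloor$, which only weakens the error term by a factor of at most $2$; if needed this is absorbed by running the construction at $2R$.

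Next, let $H=\lfloor R\rfloor$. Vaaler's lemma (see e.g.\ Theorem A.6 of Graham--Kolesnik, or Vaaler's 1985 paper) provides the trigonometric polynomial
\[\psi^*(x)=\sum_{1\le |r|\le H}a(r)e(rx),\qquad a(r)=-\frac{1}{2\pi i r}\,\phi\!\Big(\frac{|r|}{H+1}\Big),\]
where $\phi$ is bounded on $[0,1]$. Hence $a(r)\ll |r|^{-1}$, and we have
\[|\psi(x)-\psi^*(x)|\le \frac{1}{2H+2}\sum_{|r|\le H}\Big(1-\frac{|r|}{H+1}\Big)e(rx)=\frac{1}{2H+2}F_H(x),\]
with $F_H\ge 0$ the Fejér kernel. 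It follows that
\[\psi(x)\le \psi^*(x)+\frac{1}{2H+2}F_H(x).\]
In this bound the constant term of $F_H/(2H+2)$ is $1/(2H+2)\le R^{-1}$. The remaining terms have coefficients $(1-|r|/(H+1))/(2H+2)\ll H^{-1}\le |r|^{-1}$ for $1\le|r|\le H$. We therefore set
\[\beta(r)=a(r)+\frac{1-|r|/(H+1)}{2H+2},\]
which satisfies $\beta(r)\ll|r|^{-1}$ and gives the stated inequality for every $x$. At integer points $x$, where $\psi(x)=0$, the inequality holds as well, since Vaaler's bound is valid for all $x$ with his normalisation $\psi(\mathbb Z)=0$.

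The only genuine content is the Vaaler majorant itself, namely the boundedness of $\phi(t)=\pi t(1-t)\cot(\pi t)+t$ on $[0,1]$ together with the Fejér-kernel error estimate. This is the main obstacle if proved from scratch, via Beurling--Selberg extremal functions. We intend to cite it rather than reprove it, since the remaining steps are the routine coefficient bookkeeping above.
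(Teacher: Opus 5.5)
Your proposal is correct and takes the same approach as the paper, which simply cites Theorem A.6 of Graham--Kolesnik (Vaaler's theorem). Your explicit step of moving the Fej\'er-kernel error term to the right-hand side is exactly the bookkeeping that the citation leaves implicit: the constant term is $1/(2H+2)\le 1/(2R)$ and the remaining coefficients are $\ll H^{-1}\le |r|^{-1}$. The aside about rerunning the construction at $2R$ is unnecessary, and would wrongly enlarge the frequency range, but your direct bound $1/(2H+2)\le R^{-1}$ already makes it moot.
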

\begin{proof}
    See Theorem A.6 of \cite{graham-kolesnik:1991}.
\end{proof}

We define an exponent pair to be a pair $0 \leq p,q \leq \frac{1}{2}$ such that, whenever $\epsilon>0$, one has
\<\label{exp-pair-def-eq}
\sum_{a \leq n \leq b} e(f(n)) \ll |f'(x)|^p M^{q+\frac{1}{2}}F^\epsilon
\>
for any $[a,b] \subseteq [M,2M]$ and for any smooth function $f$ on $[M,2M]$ such that $|f(x)| \asymp F$ and $|f'(x)| \asymp FM^{-1}$, as in \cite{iwaniec-kowalski:2004}.

\begin{lemma}[\cite{sun:2023}, Lemma 2.5]\label{sv-vdc-bd-lemma}
    Let $M > 0$. Suppose that $(p,q)$ is an exponent pair as in \eqref{exp-pair-def-eq} and that $f(x)$ satisfies
    \[|f^{(j)}(x)| \asymp_j FM^{-j}\]
    for all $x \in [M,2M]$, every $j \geq 0$ and for some $F \geq M$. Then for any interval $I \subseteq [M,2M]$, one has that
    \[\sum_{m \in I} \psi(f(m)) \ll F^{\frac{p}{p+1}}M^{\frac{1+2q}{2(p+1)}+\epsilon}\]
    for any $\epsilon>0$.
\end{lemma}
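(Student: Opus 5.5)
The plan is to derive this from the exponent-pair estimate \eqref{exp-pair-def-eq} by combining a Fourier truncation of $\psi$ from Lemma \ref{sawtooth-lemma} with partial summation. I will use the standard truncation in two-sided form: there are coefficients with $|\beta(r)| \ll |r|^{-1}$ such that
\[\Big|\psi(x) - \sum_{1\le |r|\le R}\beta(r)e(rx)\Big| \le R^{-1}\sum_{|r|\le R}\Big(1-\tfrac{|r|}{R+1}\Big)e(rx).\]
This is Vaaler's lemma, which is the full statement behind Lemma \ref{sawtooth-lemma}. Summing over $m\in I$ gives
\[\sum_{m\in I}\psi(f(m)) \ll \frac{M}{R} + \sum_{1\le r\le R}\frac{1}{r}\Big|\sum_{m\in I}e(rf(m))\Big|.\]
The second sum collects both the main coefficients $\beta(r)$ and the nonzero frequencies of the majorant, which carry weight $R^{-1}\le r^{-1}$. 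The $r=0$ term of the majorant is what produces $M/R$. If only the one-sided Lemma \ref{sawtooth-lemma} is available, I would apply it to both $\psi(x)$ and $-\psi(x)=\psi(-x)$ for $x\notin\Z$. This gives the same two-sided bound up to the handful of integer points, which are negligible here.

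Next I estimate each inner sum with the exponent pair. The function $rf$ satisfies $|rf(x)|\asymp rF$ and $|rf'(x)|\asymp rF/M$ on $[M,2M]$, and all higher derivatives are controlled as required, so \eqref{exp-pair-def-eq} applies with $F$ replaced by $rF$. This gives
\[\sum_{m\in I}e(rf(m)) \ll (rF/M)^{p}M^{q+\frac12}(RF)^{\epsilon}.\]
Summing over $r$ with the weight $r^{-1}$ yields $\ll R^{p}F^{p}M^{q+\frac12-p}F^{\epsilon}$, up to a factor $\log R$ when $p=0$, which is absorbed into $M^{\epsilon}$.

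It remains to balance this against $M/R$. Setting $M/R = R^{p}F^{p}M^{q+\frac12-p}$ gives
\[R^{p+1} = F^{-p}M^{\frac12+p-q}.\]
With this choice both terms equal
\[M\cdot F^{\frac{p}{p+1}}M^{-\frac{1/2+p-q}{p+1}} = F^{\frac{p}{p+1}}M^{\frac{1+2q}{2(p+1)}},\]
which is the claimed bound.

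The main obstacle is making sure this choice of $R$ is legitimate, meaning $R\ge 1$, and dealing with the case where it is not. If $R<1$, then $F^{p}>M^{\frac12+p-q}$, and the target bound already exceeds $M$ because $F^{\frac{p}{p+1}}M^{\frac{1+2q}{2(p+1)}}\ge M$. In that case the trivial estimate $|\sum_{m\in I}\psi(f(m))|\le M$ suffices, so I would simply take $R=\max(1,\cdot)$ and treat the two cases separately.

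A second point to check is that $F^{\epsilon}$ can be converted into $M^{\epsilon}$ in the final bound. I would handle this by noting that when $F > M^{C}$ for a large constant $C$, the target bound again exceeds $M$ provided $p>0$, so the trivial estimate applies. When $p=0$, the bound reads $M^{q+\frac12}$ times an $\epsilon$-power, and since $F\ge M$ is assumed, I would simply allow the $\epsilon$-loss in terms of $F$, consistent with how the lemma is used.
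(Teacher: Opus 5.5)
Your argument is correct, and it is the standard derivation of this estimate: a two-sided Vaaler truncation (or Lemma~\ref{sawtooth-lemma} applied to $\pm\psi$), the exponent-pair bound applied to each $rf$, the choice $R^{p+1}=F^{-p}M^{\frac12+p-q}$, and the trivial bound when $R<1$ or $F>M^{(p+1)/p}$. The paper itself gives no proof to compare against; it simply imports the result as Lemma 2.5 of Sun.

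Your caveat at $p=0$ is in fact necessary, not just convenient. Take $f(x)=X/x$ with $X-\frac{M}{3}$ divisible by every integer up to $2M$. Then $f$ satisfies all the hypotheses, $F$ can be arbitrarily large, and $|\sum_{M\le m\le 2M}\psi(f(m))|\gg M$. So for $p=0$ the $M^{\epsilon}$ form can only hold if $q=\frac12$. This does not affect the paper, since every pair it actually uses has $p>0$.
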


\begin{theorem}\label{vdc2-ref-lemma}
    Let $\epsilon>0$. Suppose $f(x,y) = Ax^{-\alpha}y^{-\beta}$, where $A,\alpha,$ and $\beta$ are real numbers with $\alpha,\beta>0$. Let $D = (X,2X] \times (Y,2Y]$, and write $F = AX^{-\alpha}Y^{-\beta}$ and $N=XY$. Then one has that
    \[\sum_{(m,n) \in D} e(f(m,n)) \ll F^{1/3}N^{1/2} + F^\epsilon N^{5/6+\epsilon} + F^{\epsilon-1/8}N^{15/16+\epsilon} + F^{\epsilon-1/4}N^{1+\epsilon}.\]
\end{theorem}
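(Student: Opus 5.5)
The bound has the shape of the two-dimensional van der Corput estimates for monomial phases; compare the double-sum theorems in Chapter 6 of \cite{graham-kolesnik:1991}. The plan is a one-shot Weyl differencing in one variable, followed by a second-derivative test for the differenced two-variable phase. By the symmetry $(m,\alpha,X)\leftrightarrow(n,\beta,Y)$ we may assume $X\le Y$, so that $Y\ge N^{1/2}$, and we difference in the longer variable $n$.

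\textbf{Small $F$.} First dispose of the range $F\ll N^{\epsilon}$, where the claimed bound exceeds the trivial bound $N$ through the term $F^{\epsilon-1/4}N^{1+\epsilon}$. In the complementary range where $FY^{-1}\le c$ for a small constant $c$, we have $|\partial_n f|\asymp FY^{-1}$, and $\partial_n^2 f\neq 0$ because $f$ is a monomial. Applying Lemma \ref{Kusmin-Landau-lemma} to each inner sum over $n$ gives
\[
\sum_{(m,n)\in D} e(f(m,n)) \ll X\cdot YF^{-1} = NF^{-1} \le NF^{-1/4}.
\]
So we may assume $F\ge cY$. In particular $F\gg N^{1/2}$.

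\textbf{Differencing.} Apply the Weyl--van der Corput inequality in $n$ with a parameter $1\le H\le Y$:
\[
\Big|\sum_{(m,n)\in D} e(f(m,n))\Big|^2 \ll \frac{N^2}{H}+\frac{N}{H}\sum_{1\le h\le H}\Big|\sum_{(m,n)\in D_h} e\big(g_h(m,n)\big)\Big|,
\]
where $g_h(m,n)=f(m,n+h)-f(m,n)$ and $D_h\subseteq D$ is a box. By the mean value theorem, $g_h = h\int_0^1 \partial_y f(m,n+th)\,dt$. This has size $\asymp hFY^{-1}$, and every partial derivative $\partial_m^{i}\partial_n^{j} g_h$ is $\asymp hFY^{-1}X^{-i}Y^{-j}$, with the leading term a monomial of the same type.

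\textbf{Bounding the differenced sums.} The Hessian determinant of $x^{-\alpha}y^{-\beta}$ is a nonzero multiple of $x^{-2\alpha-2}y^{-2\beta-2}$, namely $\alpha\beta(1-\alpha-\beta)$ times it up to sign; when $\alpha+\beta=1$ one uses the perturbation from the $h$-shift, or switches to differencing in $m$. For $g_h$ the Hessian is therefore of size $\lambda^2$ with $\lambda\asymp hF/(Y^2X)\cdot(\text{scaling})$. The two-dimensional second derivative test (Kusmin--Landau plus Poisson summation in both variables, as in Graham--Kolesnik) should give
\[
\sum_{D_h} e(g_h) \ll N\,(hF)^{-1/2}\,\cdot(\text{normalization}) + \text{boundary terms},
\]
and the boundary terms are of the shape $N^{5/6}$, $F^{-1/8}N^{15/16}$ after one-dimensional exponent-pair estimates along the edges. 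Summing over $h\le H$ and inserting into the differencing inequality produces a bound of the form $N^2H^{-1}+N^2(HF)^{1/2}Y^{-1}+\dots$. Choosing $H$ to balance the first two terms, truncated to $1\le H\le Y$, yields the main term $F^{1/3}N^{1/2}$. The remaining terms $F^{\epsilon}N^{5/6+\epsilon}$, $F^{\epsilon-1/8}N^{15/16+\epsilon}$ and $F^{\epsilon-1/4}N^{1+\epsilon}$ arise from the edge contributions and from the cases where the optimal $H$ falls outside $[1,Y]$. The $F^{\epsilon}$ losses come from dyadic decompositions of the dual sums.

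\textbf{Main obstacle.} I expect the hardest step to be the two-dimensional second-derivative test with full control of the boundary and degenerate-Hessian contributions. If carrying this out directly becomes unwieldy, I would fall back on quoting the known double-sum estimate for monomials, namely the Kolesnik/Robert--Sargos type theorem from which this four-term bound is taken. I would then only verify that its hypotheses $\alpha,\beta>0$ and $F$ as defined match the present normalization.
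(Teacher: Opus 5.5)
Your fallback is essentially the paper's proof: the paper does not derive this estimate but quotes it as a weakened form of Theorem 6.12 of \cite{graham-kolesnik:1991}. The direct derivation you give as your main plan, however, has a genuine gap. It is not only that the key step is left with placeholders; the route itself cannot reach the bound.

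\textbf{The arithmetic does not close.} Summing your $N(hF)^{-1/2}$ over $h\le H$ and multiplying by $N/H$ gives $N^2H^{-1/2}F^{-1/2}$, not $N^2(HF)^{1/2}Y^{-1}$. Balancing $N^2H^{-1}$ against $N^2(HF)^{1/2}Y^{-1}$ then gives $NF^{1/6}Y^{-1/3}$, not $F^{1/3}N^{1/2}$.

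\textbf{One-variable differencing cannot reach the bound.} Differencing in one variable followed by the two-dimensional second-derivative test cannot give the stated estimate. The differenced phase $g_h$ has size $G\asymp hF/Y$ on an $X\times Y$ box, with Hessian determinant $\asymp G^2N^{-2}$. After Poisson summation in both variables there are about $G^2/N$ dual frequencies, each of weight $\asymp N/G$, so the main term of the test is $G$ itself. Hence $|S|^2\ll N^2H^{-1}+NHFY^{-1}$, and the best choice of $H$ gives $|S|\ll N^{3/4}(F/Y)^{1/4}$. For $X=Y=N^{1/2}$ and $F=N$ this is $N^{7/8}$, while the right-hand side of the theorem is $N^{5/6+O(\epsilon)}$. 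Your reduction to $X\le Y$ does not exclude this case.

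\textbf{Where $F^{1/3}N^{1/2}$ comes from.} It comes from differencing in both variables simultaneously, with shifts $|h_1|\le\eta X$, $|h_2|\le\eta Y$. The differenced phase then has size $\ll\eta F$, giving $|S|^2\ll N\eta^{-2}+N\eta F$, which is optimized at $\eta=F^{-1/3}$. This is the two-dimensional analogue of how the $AB$ process yields $F^{1/6}M^{1/2}$ in one variable.

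\textbf{The price of two-dimensional differencing.} Now $f(m+h_1,n+h_2)-f(m,n)\approx h_1f_x+h_2f_y$ is a binomial whose Hessian can vanish inside the box when $h_1h_2<0$. For example, for $f=A/(xy)$ its determinant is proportional to $P^2+4PQ+Q^2$ with $P/Q=h_1y/(h_2x)$, and this vanishes at $P/Q=-2\pm\sqrt{3}$. By contrast, the Hessian determinant of $f$ itself is $A^2\alpha\beta(1+\alpha+\beta)x^{-2\alpha-2}y^{-2\beta-2}$, which never vanishes for $\alpha,\beta>0$. So the $\alpha+\beta=1$ degeneracy you anticipated does not occur, and the degeneracy that matters is a different one.

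Handling these degenerate shifts, together with the boundary terms, is where the real work lies. Without it, your direct argument does not establish the statement; only the citation does.
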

\begin{proof}
    This is a weakened version of Theorem 6.12 of \cite{graham-kolesnik:1991}.
\end{proof}

In the proof of the lower bound in the subcritical case, we will also make use of a modified Fej\'er kernel given by
\<\label{Fejer-kernel-def-eq}
F(\alpha) = e\bigg(\bigg(N-\frac{K}{2}\bigg)\alpha\bigg)\sum_{|h| \leq K} \bigg(1-\dfrac{h}{K}\bigg)e(h\alpha) = \sum_{N-K < n \leq N} \bigg(1-\dfrac{|2N-K-2n|}{K}\bigg)e(n\alpha).
\>
One may see that
\<\label{F(a)-sin-ident-eq}
F(\alpha) = \frac{1}{K}e\bigg(\bigg(N-\frac{K}{2}\bigg)\alpha\bigg)\dfrac{\sin^2(\pi K\alpha)}{\sin^2(\pi\alpha)} \ll \min\bigg\{K,\frac{1}{K\|\alpha\|^{2}}\bigg\}.
\>
Using the standard Fourier convolution
\[(f * g)(\alpha) = \int_0^1 f(\beta)g(\alpha-\beta)\d\beta\]
we may also then deduce that
\<\label{lower-bd-smoothing-ineq}
\int_0^1 |(F * S_k)(\alpha)| \d\alpha \ll \int_0^1 \int_0^1 |F(\alpha-\beta)||S_k(\beta;K)|\d\beta\d\alpha \leq \int_0^1 |S_k(\alpha;K)|\d\alpha.
\>
Thus to prove the lower bound on the $L^1$-mean, it is sufficient to prove it for the Fej\'er-smoothed sum
\[(F*S_k)(\alpha) = \sum_{N-K < n \leq N} \bigg(1-\dfrac{|2N-K-2n|}{K}\bigg)\mu_k(n)e(n\alpha).\]

Finally, we introduce the notation by which the middle part estimate of Lemma \ref{c_n-bd-lemma} is injected into our arguments. Observe the standard identity
\<\label{mu_k-conv-id-eq}
\mu_k(n) = \sum_{d^k|n} \mu(d).
\>
Recalling the definition \eqref{c_n-def}, we write
\<\label{T_i-def-eq}
T_i(\alpha) = T_i(\alpha;K) = \sum_{N-K<n\leq N} c_n(\hi,2\hi)e(n\alpha),
\>
so that $S_k(\alpha;K) = \sum T_i(\alpha)$. We will also make use of longer segments in order to refine the argument, defining
\<\label{H,h-def-eq}
h_j(\alpha) = h_j(\alpha;K) = \sum_{i< j} T_i(\alpha), \qquad H_j(\alpha) = H_j(\alpha;K) = \sum_{i\geq j} T_i(\alpha).
\>
A number of estimates regarding these exponential sums will be essential for our arguments.

\begin{lemma}\label{T_i-bounds-lemma}
    One has that, for $s>1$,
    \[
        \int_0^1 |T_i(\alpha)| \d\alpha \ll \hi \log K \qquad \text{and} \qquad \int_0^1 |T_i(\alpha)|^s \d\alpha \ll \hi K^{s-1}.
    \]
\end{lemma}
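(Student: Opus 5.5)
Write $D=\hi$. Since $c_n(D,2D)=\sum_{D<d\le 2D,\ d^k\mid n}\mu(d)$, we can expand $T_i(\alpha)$ by writing $n=d^km$:
\[
T_i(\alpha)=\sum_{D<d\le 2D}\mu(d)\sum_{(N-K)/d^k<m\le N/d^k} e(d^k m\alpha).
\]
Each inner sum is a geometric progression. Its length is at most $K/d^k+1$, and it is bounded by $\min\{K/d^k+1,\ \|d^k\alpha\|^{-1}\}$. The triangle inequality then reduces both claims to moments of these elementary geometric sums, and we sum over $d$ at the end.

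For the $L^1$ bound, the triangle inequality gives
\[
\int_0^1|T_i(\alpha)|\d\alpha \le \sum_{D<d\le 2D}\int_0^1\Big|\sum_{m\in I_d}e(d^km\alpha)\Big|\d\alpha .
\]
The substitution $\beta=d^k\alpha$ maps $[0,1]$ onto $d^k$ full periods, so each integral equals $\int_0^1|\sum_{m\in I_d}e(m\beta)|\d\beta$. This is the $L^1$ norm of a Dirichlet-type kernel of length $L_d\le K/d^k+1\le K+1$, and the standard bound is $\ll 1+\log(L_d+1)\ll\log K$. Summing over the $\ll D$ values of $d$ gives $\ll \hi\log K$. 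The edge case $L_d=0$ contributes nothing, and the intervals are nonempty only when $d\le N^{1/k}$, which is harmless.

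For $s>1$ I would not use the triangle inequality inside the $s$-th power. Instead, Minkowski's inequality in $L^s$ gives
\[
\|T_i\|_s\le \sum_{d}\|\textstyle\sum_{m\in I_d}e(d^km\cdot)\|_s ,
\]
and by the same change of variables each term equals the $L^s$ norm of a kernel of length $L_d$. For $s>1$ this is $\asymp L_d^{1-1/s}$: integrate $\min\{L,\|\beta\|^{-1}\}^s$, which converges because $s>1$. The difficulty is that Minkowski then yields
\[
\|T_i\|_s\ll \sum_d (K/d^k+1)^{1-1/s}\ll D\,(K/D^k+1)^{1-1/s},
\]
and raising to the $s$-th power gives $D^s(\cdots)^{s-1}$ rather than $D\,K^{s-1}$. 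So I would combine two inputs. The trivial bound $|T_i(\alpha)|\le\sum_d L_d\ll K/D^{k-1}+D\le K$ holds because the $n$ with $d^k\mid n$ for some $d\in(D,2D]$ number at most $\sum_d (K/d^k+1)$. Parseval gives
\[
\int|T_i|^2=\sum_n |c_n|^2\ll \sum_n \tau_{D}(n)\,|c_n|,
\]
which is $\ll K D^{1-k}+D$ up to divisor-type losses; more cleanly, $|c_n|\le\#\{d\in(D,2D]: d^k\mid n\}$, and counting pairs gives $\sum_n|c_n|^2\ll \sum_{d,d'}(K/\mathrm{lcm}(d,d')^k+1)\ll K+D^2$. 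Interpolating, for $s\ge2$,
\[
\int|T_i|^s\le \|T_i\|_\infty^{s-2}\int|T_i|^2\ll K^{s-2}(K+D^2),
\]
which is $\ll \hi K^{s-1}$ provided $D\le K$. When $D>K$, each $n$ has $c_n$ bounded by the number of $d$ dividing it, which is small, and $|T_i|\ll K$ with $\int|T_i|^2\ll K N^{\epsilon}$, so the bound follows even more easily. For $1<s<2$, I would interpolate between the $L^1$ bound $\hi\log K$ and the $L^2$ bound by Hölder:
\[
\int|T_i|^s\le\Big(\int|T_i|\Big)^{2-s}\Big(\int|T_i|^2\Big)^{s-1}.
\]
This gives $(\hi\log K)^{2-s}(K+\hi^2)^{s-1}$, which is $\le \hi K^{s-1}$ up to a power of $\log K$. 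To remove the logarithm I expect to need a direct argument: write $|T_i|\le\sum_d\min\{L_d,\|d^k\alpha\|^{-1}\}$ and bound the measure of the set where this sum exceeds $\lambda$.

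The main obstacle is therefore the case $1<s<2$ without logarithmic loss, together with making sure the $L^2$ count gives exactly $\hi K$ rather than $\hi^2$. I would try the distributional estimate
\[
|\{\alpha:|T_i(\alpha)|>\lambda\}|\ll \hi/\lambda
\]
for $1\le\lambda\le K$. This follows because each of the $\ll\hi$ summands exceeds $\lambda/\hi$-level contributions only on a set of measure $\ll 1/\lambda$ after weighting, so one applies a weak-type $L^1$ bound per $d$ and sums. Then
\[
\int|T_i|^s\ll \int_1^{K}\lambda^{s-1}\,\hi\lambda^{-1}\d\lambda\ll \hi K^{s-1},
\]
with no logarithm since $s>1$.
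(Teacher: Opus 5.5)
Your $L^1$ argument is the same as the paper's. Your treatment of $s\ge 2$ (sup bound times Parseval, for $\hi\le K$) is a valid alternative.

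The gap is the range $1<s<2$. As you note, interpolating between $L^1$ and $L^2$ loses $(\log K)^{2-s}$, already for $\hi=1$. Your proposed repair $|\{\alpha:|T_i(\alpha)|>\lambda\}|\ll\hi/\lambda$ does not follow from what you describe. Write $g_d(\alpha)=\min\{L_d,\|d^k\alpha\|^{-1}\}$. Weak $L^1$ is not normable, so summing per-$d$ weak-type bounds through the union bound gives only $\sum_d|\{g_d>\lambda/\hi\}|\ll\hi^2/\lambda$.

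In fact the estimate is false for the majorant $\sum_d g_d$ that your sketch works with. Take $\hi\le cK^{1/(k+1)}$, so that $L_d\ge\hi$. The points $d^k\alpha$ modulo $1$, for $\hi<d\le 2\hi$, are typically spread out, and the $j$-th closest to an integer contributes about $\hi/j$. A second-moment computation shows that $\sum_d\min\{\hi,\|d^k\alpha\|^{-1}\}\gg\hi\log\hi$ outside a set of measure $o(1)$. But at $\lambda=\hi\log\hi$ your bound would require measure $\hi/\lambda\asymp 1/\log\hi$. A correct distributional bound for $T_i$ would have to exploit cancellation among the terms $\mu(d)G_d$, where $G_d$ is the inner geometric sum, and nothing in the sketch supplies that.

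The missing observation is that the Minkowski route you discarded already proves the lemma. You compared $\hi^s(\cdots)^{s-1}$ with $\hi K^{s-1}$ as though the bracket were $K$, but it is $K\hi^{-k}+1$. From $\|T_i\|_s\ll\hi\,(K\hi^{-k}+1)^{1-1/s}$ you get
\[
\int_0^1|T_i(\alpha)|^s\d\alpha\ll\hi^{s}\big(K\hi^{-k}+1\big)^{s-1}\ll K^{s-1}\hi^{\,1-(k-1)(s-1)}+\hi^{s}\ll\hi K^{s-1}
\]
for $\hi\le K$, since $(k-1)(s-1)>0$.

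This is exactly the paper's proof. There it is phrased as H\"older in $d$, $\big(\sum_d|G_d|\big)^s\le\hi^{s-1}\sum_d|G_d|^s$, followed by the substitution $\beta=d^k\alpha$ and the bound $\int_0^1\min\{K/d^k+1,\|\beta\|^{-1}\}^s\d\beta\ll(K/d^k+1)^{s-1}$. Because $L^s$ is a genuine norm for $s>1$, passing to the majorant costs only the factor $\hi^{s-1}$. The saving $\hi^{-k(s-1)}$ in the length of each geometric sum more than absorbs that factor. This handles every $s>1$ at once with no logarithm, so you need neither the $L^\infty$--$L^2$ interpolation nor a distributional argument.
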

\begin{proof}
    The proof is the same as in the case $K=N$; see Lemma 2.2 of \cite{keil:2013}. We handle both bounds together. Applying the definitions \eqref{c_n-def} and \eqref{T_i-def-eq}, we may swap the order of summation and apply the triangle inequality to write 
    \<\label{T_i-lemma-proof-eq-1}
    \int_0^1 |T_i(\alpha)|^s \d\alpha \leq \int_0^1 \bigg(\sum_{\hi < d \leq 2\hi} \bigg|\sum_{\frac{N-K}{d^k} < n \leq \frac{N}{d^k}} e(nd^k\alpha)\bigg|\bigg)^s\d\alpha.
    \>
    If $s=1$, then this is bounded by
    \[\sum_{\hi < d \leq 2\hi} \int_0^1 \bigg|\sum_{\frac{N-K}{d^k} < n \leq \frac{N}{d^k}} e(nd^k\alpha)\bigg| \d\alpha.\]
    The change of variable $\beta = d^k\alpha$ and the bound 
    \<\label{lin-exp-sum-bd}
    \sum_{1 \leq n \leq K/d^k} e(n\alpha) \ll \min\bigg\{\frac{K}{d^k},\frac{1}{\|\alpha\|}\bigg\}\>
    thus results in the bound
    \[\int_0^1 |T_i(\alpha)| \d\alpha \ll \sum_{\hi < d \leq 2\hi} \log(K) \ll \hi \log K.\]

    On the other hand, if $s>1$, we may apply H\"older's inequality to \eqref{T_i-lemma-proof-eq-1} to see that
    \[\int_0^1 |T_i(\alpha)|^s \d\alpha \ll \bigg(\sum_{\hi < d \leq 2\hi} 1^{\frac{s}{s-1}}\bigg)^{s-1}\sum_{\hi < d \leq 2\hi}\int_0^1 \bigg|\sum_{\frac{N-K}{d^k} < n \leq \frac{N}{d^k}} e(nd^k\alpha)\bigg|^s \d\alpha.\]
    The first sum is of size $\hi^{s-1}$, while the remaining integral may again be handled by the change of variable $\beta = d^k\alpha$ and the bound \eqref{lin-exp-sum-bd}, so that we may crudely bound it as
    \[\int_0^1 |T_i(\alpha)|^s \d\alpha \ll \hi^{s-1} \sum_{\hi < d \leq 2\hi} \int_0^1 \Bigg(\min\bigg\{\frac{K}{d^k},\frac{1}{\|\beta\|}\bigg\}\Bigg)^s \d\beta \ll \hi K^{s-1}. \qedhere\]
\end{proof}

\begin{corollary}\label{h_i-bound-lemma}
    One has that
    \[\int_0^1 |h_i(\alpha)| \d\alpha \ll \hi \log K.\]
\end{corollary}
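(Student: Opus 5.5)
The plan is to mimic the $s=1$ case of Lemma \ref{T_i-bounds-lemma} directly rather than sum the bounds for the individual $T_j$ with $j<i$. Summing the $T_j$ would only give $\sum_{j<i}\hat{j}\log K \ll \hi\log K$ anyway, since $\sum_{j<i}\hat{j} = 2^i-1 < \hi$. That alone already proves the corollary by the triangle inequality: $\int_0^1|h_i(\alpha)|\d\alpha \le \sum_{j<i}\int_0^1|T_j(\alpha)|\d\alpha \ll \sum_{j<i}\hat{j}\log K \ll \hi\log K$. So the main route is simply to invoke Lemma \ref{T_i-bounds-lemma} for each $j<i$ and sum the resulting geometric series.

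One small point needs checking. The index $i$ in \eqref{T_i-def-eq} ranges over nonnegative integers with $\hi \le N^{1/k}$, so the segments $(\hat{j},2\hat{j}]$ for $j<i$ tile $(1,\hi]$, up to the term $d=1$. I would then handle $d=1$. If the dyadic decomposition starts with $T_0$ covering $d\in(1/2,1]$ (that is, $\hat{0}=1$ with the interval $(2^{-1},1]$ absorbed), then the $d=1$ term is $\sum_{N-K<n\le N}e(n\alpha)$. Its $L^1$ norm is $\ll\log K$ by \eqref{lin-exp-sum-bd}, which is again $\ll \hi\log K$. Either way the contribution is absorbed.

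Alternatively, I could argue directly as in the proof of Lemma \ref{T_i-bounds-lemma}. Write $h_i(\alpha)=\sum_{d\le \hi}\mu(d)\sum_{(N-K)/d^k<m\le N/d^k}e(md^k\alpha)$ and bring the absolute value inside. For each $d$, substitute $\beta=d^k\alpha$, which preserves the $L^1$ norm over a period. Then apply \eqref{lin-exp-sum-bd} to get $\ll\log K$ per $d$, hence $\ll \hi\log K$ in total.

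There is no real obstacle here. The only care needed is bookkeeping of the dyadic index range, so that the implied constant is uniform in $i$; that uniformity comes from the geometric sum, or from the direct count of at most $\hi$ values of $d$.
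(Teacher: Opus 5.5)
Your main route is exactly the paper's proof: apply the triangle inequality to $h_i=\sum_{j<i}T_j$, bound each $T_j$ with the $s=1$ case of Lemma \ref{T_i-bounds-lemma}, and sum the geometric series $\sum_{j<i}\hat{j}\ll\hi$. The side remarks about the $d=1$ term and the direct substitution $\beta=d^k\alpha$ are correct but not needed.
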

\begin{proof}
    Apply the triangle inequality and Lemma \ref{T_i-bounds-lemma} to the definition of $h_i(\alpha)$ in \eqref{H,h-def-eq}.
\end{proof}

As a consequence of Parseval's identity and \eqref{Delta-def-eq}, we may also deduce the following $L^2$-mean estimates.

\begin{corollary}\label{L2-T_i,H_i-bounds-lemma}
    For any $\epsilon>0$, one has that
    \[\int_0^1 |T_i(\alpha)|^2 \d\alpha  \ll K\hi^{1-k} + N^\epsilon K^{\frac{1}{k}} + N^{\Delta_k+\epsilon} \] 
    and
    \[\int_0^1 |H_i(\alpha)|^2 \d\alpha \ll K\hi^{1-k} + N^{\epsilon}K^{\frac{1}{k}} + N^{\Delta_k+\epsilon}.\]
\end{corollary}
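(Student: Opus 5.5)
The plan is to reduce both bounds to a single application of Parseval's identity, followed by the middle-part estimate \eqref{Delta-def-eq}, read with the main term $Ky^{1-k}$ exactly as in Lemma \ref{c_n-bd-lemma}. I am assuming this is the intended form of the definition of $\Delta_k$: the statement to be proved has $K\hi^{1-k}$, whereas \eqref{Delta-def-eq} as printed has $Ky^{-1}$, and with the weaker term $K\hi^{-1}$ this plan only yields the corresponding weaker bound.

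\textbf{The bound for $T_i$.} By \eqref{T_i-def-eq}, $T_i(\alpha)$ is a finite exponential sum over $n \in (N-K,N]$ with coefficients $c_n(\hi,2\hi)$. Parseval's identity then gives
\[\int_0^1 |T_i(\alpha)|^2 \d\alpha = \sum_{N-K<n\leq N} |c_n(\hi,2\hi)|^2 .\]
Since $n \leq N$, any $d$ with $d^k \mid n$ satisfies $d \leq N^{1/k}$. So if $2\hi > N^{1/k}$ we may replace the upper limit $2\hi$ by $N^{1/k}$ without changing $c_n$. If moreover $\hi \geq N^{1/k}$, then $c_n(\hi,2\hi)=0$ and the bound is trivial. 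In the remaining cases we apply \eqref{Delta-def-eq} with $y=\hi$ and $z=\min\{2\hi,N^{1/k}\}$, which gives the first estimate.

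\textbf{The bound for $H_i$.} By \eqref{H,h-def-eq} and the telescoping structure of the dyadic blocks,
\[H_i(\alpha) = \sum_{N-K<n\leq N} \Big(\sum_{j\geq i} c_n(\hat j,2\hat j)\Big)e(n\alpha) = \sum_{N-K<n\leq N} c_n(\hi,N^{1/k})\,e(n\alpha).\]
Here the inner sum is finite, because blocks with $\hat j > N^{1/k}$ contribute nothing, and the dyadic intervals $(\hat j, 2\hat j]$ for $j \geq i$ tile $(\hi,\infty)$. Parseval's identity again converts $\int_0^1|H_i|^2$ into $\sum_n |c_n(\hi,N^{1/k})|^2$. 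Applying \eqref{Delta-def-eq} with $y=\hi$ and $z=N^{1/k}$ gives the second estimate, and the case $\hi \geq N^{1/k}$ is again trivial.

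\textbf{Expected obstacle.} The only real issue is bookkeeping: checking that the dyadic pieces recombine exactly into $c_n(\hi,N^{1/k})$, and that the uniformity of \eqref{Delta-def-eq} in $y<z$ covers every $i$. No genuine analytic difficulty arises, since all the depth is contained in the hypothesis \eqref{Delta-def-eq}.
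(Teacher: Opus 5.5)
Your proposal is correct and matches the paper's argument: Parseval's identity turns each integral into $\sum_{N-K<n\leq N}|c_n(\hi,2\hi)|^2$ or $\sum_{N-K<n\leq N}|c_n(\hi,N^{1/k})|^2$, and \eqref{Delta-def-eq} then gives the bounds. You are also right that \eqref{Delta-def-eq} has to be read with $Ky^{1-k}$ in place of $Ky^{-1}$, matching Lemma \ref{c_n-bd-lemma} and \eqref{last-lemma-assumed-dyadic-eq}; that is how the paper actually uses it, and the printed $Ky^{-1}$ is evidently a typo (the two agree only when $k=2$).
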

It is an important observation that the term $K\hi^{1-k}$ dominates this bound whenever one has $\hi \ll \min\{K^{\frac{1}{k}-\epsilon},(KN^{-\Delta_k})^{\frac{1}{k-1}}\}$. In particular, if $K \gg N^{(\frac{k+1}{2})\Delta_k+\epsilon}$, then this occurs whenever $\hi \ll K^{\frac{1}{k+1}}$. One may also deduce from \eqref{Delta-def-eq} an immediate pointwise estimate on $H_i(\alpha)$.
\begin{lemma}\label{H_i-pw-lemma}
    For any $\alpha \in \R$ and any $\epsilon > 0$, one has that
    \[|H_i(\alpha)| \ll K\hi^{\frac{1-k}{2}} + N^\epsilon K^{\frac{k+1}{2k}} + K^{\frac{1}{2}}N^{\frac{\Delta_k}{2}+\epsilon}.\]
\end{lemma}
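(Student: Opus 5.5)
The pointwise bound will come from the trivial inequality $|H_i(\alpha)| \leq \sum_{N-K<n\leq N} |c_n(\hi,\infty)|$ followed by Cauchy--Schwarz over the $K$ integers $n\in(N-K,N]$. Since $H_i=\sum_{j\geq i}T_j$ and the $T_j$ have disjoint dyadic ranges of $d$, the coefficient of $e(n\alpha)$ in $H_i(\alpha)$ is $c_n(\hi,z)$ with $z$ any number exceeding $N^{1/k}$. Indeed, $d^k\mid n$ with $n\leq N$ forces $d\leq N^{1/k}$, so we may take $z=N^{1/k}$, or just above it to respect the strict inequality $y<z$ in \eqref{Delta-def-eq}. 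Thus
\[
|H_i(\alpha)| \leq \sum_{N-K<n\leq N}|c_n(\hi,z)| \leq K^{\frac{1}{2}}\bigg(\sum_{N-K<n\leq N}|c_n(\hi,z)|^2\bigg)^{\frac{1}{2}}.
\]

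Next I would insert the defining bound \eqref{Delta-def-eq}, in the same form as already recorded for $\int_0^1|H_i|^2$ in Corollary \ref{L2-T_i,H_i-bounds-lemma}. By Parseval that corollary is literally the statement
\[
\sum_{N-K<n\leq N}|c_n(\hi,z)|^2 \ll K\hi^{1-k}+N^\epsilon K^{\frac{1}{k}}+N^{\Delta_k+\epsilon},
\]
so I will simply quote it. If $\hi\geq N^{1/k}$, then $H_i\equiv 0$ and there is nothing to prove.

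Finally I use $\sqrt{a+b+c}\leq\sqrt a+\sqrt b+\sqrt c$ and multiply by $K^{1/2}$. This gives the three terms $K\hi^{\frac{1-k}{2}}$, $N^{\epsilon/2}K^{\frac{1}{2}+\frac{1}{2k}}=N^{\epsilon/2}K^{\frac{k+1}{2k}}$, and $K^{\frac12}N^{\frac{\Delta_k}{2}+\frac{\epsilon}{2}}$. Renaming $\epsilon$ yields the stated bound, uniformly in $\alpha$.

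The only real point of care is the form of the first term in the $\ell^2$ bound. As literally written, \eqref{Delta-def-eq} has $Ky^{-1}$ rather than $Ky^{1-k}$, while the lemma needs $K\hi^{1-k}$, which is what Corollary \ref{L2-T_i,H_i-bounds-lemma} (and Lemma \ref{c_n-bd-lemma}) provide. If one had to argue from \eqref{Delta-def-eq} with $y^{-1}$ alone, I would recover $Ky^{1-k}$ by the standard elementary estimate. Expanding the square gives the diagonal-type count
\[
\sum_{y<d_1,d_2\leq z}\#\{n\in(N-K,N]:[d_1,d_2]^k\mid n\}\leq\sum_{d_1,d_2>y}\bigg(\frac{K}{[d_1,d_2]^k}+1\bigg).
\]
The main part $K\sum_{d_1,d_2>y}[d_1,d_2]^{-k}\ll Ky^{1-k}$ handles the large-$K$ regime, and the error terms are absorbed into the $N^{\Delta_k}$ contribution. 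I will rely on Corollary \ref{L2-T_i,H_i-bounds-lemma} as stated, which makes the argument a two-line consequence of Cauchy--Schwarz.
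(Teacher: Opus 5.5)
Your argument is the paper's proof: bound $|H_i(\alpha)|\le K^{1/2}\big(\sum_{N-K<n\le N}|c_n(\hi,N^{1/k})|^2\big)^{1/2}$ by Cauchy--Schwarz, then insert the $\ell^2$ bound defining $\Delta_k$, read with $Ky^{1-k}$ as the paper itself does in Corollary~\ref{L2-T_i,H_i-bounds-lemma} (so the $Ky^{-1}$ in \eqref{Delta-def-eq} is best regarded as a typo). Your fallback for recovering $Ky^{1-k}$ from the literal definition would not work, though you rightly don't rely on it: the ``$+1$'' terms in the elementary count contribute as much as $N^{1/k}$ (the diagonal $d_1=d_2$ alone gives $z-y$), which is the Balog--Ruzsa term in \eqref{balog-ruzsa-W-bd} and cannot be absorbed into $N^{\Delta_k}$, since $\Delta_k\le\delta_k<\frac{1}{k+1}$.
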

\begin{proof}
    This follows easily from the Cauchy-Schwarz inequality. We have that
    \[H_i(\alpha) \leq K^{\frac{1}{2}}\bigg(\sum_{N-K < n \leq N} |c_n(\hi,N^{\frac{1}{k}})|^2\bigg)^{\frac{1}{2}},\]
    so the definition \eqref{Delta-def-eq} reveals the bound.
\end{proof}

Finally, some very careful interpolation of these estimates yields a mixed moment bound which is vital for the analysis of the case $s=2$.

\begin{lemma}\label{hH-mixed-moment-lemma}
    Let $\epsilon>0$. For $K \gg N^{\Delta_k+\epsilon}$, one has that
    \[\int_0^1 |h_D(\alpha)H_D(\alpha)| \d\alpha = o(K).\]
\end{lemma}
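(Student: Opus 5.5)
The plan is to separate the two factors by the Cauchy--Schwarz inequality and then control each resulting $L^2$-mean via Parseval's identity:
\[
\int_0^1 |h_D(\alpha)H_D(\alpha)|\d\alpha \leq \bigg(\int_0^1 |h_D(\alpha)|^2\d\alpha\bigg)^{\frac12}\bigg(\int_0^1 |H_D(\alpha)|^2\d\alpha\bigg)^{\frac12}.
\]
The lemma then follows if the first factor is $O(K^{1/2})$ and the second is $o(K^{1/2})$. Throughout, I take $D=D(N)$ with $\hD\to\infty$ slowly, say $\hD\leq K^{\epsilon/4}$; this is the regime in which the lemma is used for the case $s\geq 2$.

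\emph{The small-$d$ part.} By Parseval, $\int_0^1|h_D|^2 = \sum_{N-K<n\leq N}|c_n(\cdot,\hD)|^2$, where the middle part now runs over $d\leq \hD$ (up to the harmless endpoint convention of \eqref{H,h-def-eq}). Expanding the square and swapping sums gives
\[
\sum_{N-K<n\leq N}\Big|\sum_{\substack{d\leq \hD\\ d^k\mid n}}\mu(d)\Big|^2 \leq \sum_{d_1,d_2\leq \hD}\#\{n\in(N-K,N]:[d_1,d_2]^k\mid n\} \leq \sum_{d_1,d_2\leq\hD}\Big(\frac{K}{[d_1,d_2]^k}+1\Big).
\]
Since $\sum_{d_1,d_2}[d_1,d_2]^{-k}$ converges for $k\geq2$, this is $\ll K+\hD^2\ll K$.

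\emph{The large-$d$ part.} Corollary \ref{L2-T_i,H_i-bounds-lemma} gives
\[
\int_0^1|H_D|^2\ll K\hD^{1-k}+N^{\epsilon'}K^{\frac1k}+N^{\Delta_k+\epsilon'}.
\]
Take $\epsilon'<\epsilon$. Then $K\gg N^{\Delta_k+\epsilon}$ makes the last term $O(KN^{\epsilon'-\epsilon})=o(K)$, and the middle term is $o(K)$ because $K^{1/k}N^{\epsilon'}\leq K^{1/2+o(1)}$ once $\epsilon'$ is small relative to $\epsilon$ (recall $K\geq N^{\epsilon}$). The first term is $o(K)$ because $\hD\to\infty$. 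Multiplying the two factors gives $K^{1/2}\cdot o(K^{1/2})=o(K)$.

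\emph{Main obstacle.} The delicate point is if the lemma is needed with $\hD$ large, e.g.\ $\hD$ near $K^{1/(k+1)}$ as elsewhere in the paper. Then the trivial $\hD^2$ term in the $h_D$ bound is no longer negligible, and the gain $\hD^{1-k}$ must be balanced against the growth of $h_D$. In that case I would interpolate. I would split $h_D=h_{D_0}+\sum_{D_0\leq i<D}T_i$ with $\hD_0\to\infty$ slowly, and treat the first piece exactly as above, using the $L^2$ bound for $H_D$ with $D\geq D_0$. For each $T_i$ with $i\geq D_0$, I would use H\"older in the form $\int|T_iH_D|\leq \|T_i\|_1^{1/2}\|T_i\|_\infty^{1/2}\|H_D\|_2$. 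Here $\|T_i\|_1\ll \hi\log K$ comes from Lemma \ref{T_i-bounds-lemma}, and $\|T_i\|_\infty\ll K\hi^{1-k}$ is trivial. Alternatively, I would combine the pointwise Lemma \ref{H_i-pw-lemma} for $H_D$ with $\int|h_D|\ll\hD\log K$ from Corollary \ref{h_i-bound-lemma}. Summing the resulting geometric-type series in $i$ should again give $o(K)$ under $K\gg N^{\Delta_k+\epsilon}$. Checking that the exponents close in this range is the step I expect to require the most care.
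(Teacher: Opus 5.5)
Your Cauchy--Schwarz argument is correct, and it already proves the lemma in the case the paper needs. However, you have misidentified that case, and the obstacle you flag at the end does not exist.

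\textbf{The regime.} In the paper $D=(k+1)^{-1}\log K$, so $\hD=K^{1/(k+1)}$; $D$ is not slowly growing. This is the $D$ recalled in the $s=2$ lower bound. It is also visible in the paper's proof, which uses $\sup_\alpha|H_D(\alpha)|\ll K^{\frac{k+3}{2(k+1)}}+\cdots$ and $\int_0^1|H_D|^2\ll K^{\frac{2}{k+1}}+\cdots$. So your restriction $\hD\le K^{\epsilon/4}$, and the claim that this is where the lemma is used, are wrong.

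\textbf{Your argument still works there.} Nothing in your first two steps needs $\hD$ small:
\begin{itemize}
\item The off-diagonal term in your bound $\int_0^1|h_D|^2\ll K+\hD^2$ is $\hD^2=K^{\frac{2}{k+1}}\le K^{2/3}$ for $k\ge 2$. So $\int_0^1|h_D|^2\ll K$ holds; any $\hD\le K^{1/2}$ would do. Alternatively, $\|h_D\|_2\le\|S_k\|_2+\|H_D\|_2$ with $\|S_k\|_2^2\le K$ by Parseval.
\item In Corollary~\ref{L2-T_i,H_i-bounds-lemma}, the first term is $K\hD^{1-k}=K^{\frac{2}{k+1}}=o(K)$. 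Your treatment of the other two terms under $K\gg N^{\Delta_k+\epsilon}$ is unchanged.
\end{itemize}
Hence $\int_0^1|h_DH_D|\d\alpha\le\|h_D\|_2\|H_D\|_2\ll K^{1/2}\cdot o(K^{1/2})=o(K)$ holds verbatim for the paper's $D$. The ``main obstacle'' paragraph, with its unverified interpolation, should be replaced by this one-line observation.

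\textbf{Comparison with the paper.} The paper does not apply Cauchy--Schwarz to the whole product. It splits $h_D=h_C+\sum_{C<i<D}T_i$ with $\hC=N^{\epsilon/100}$. It bounds $\int|h_CH_D|$ by the pointwise estimate of Lemma~\ref{H_i-pw-lemma} times the $L^1$ bound of Corollary~\ref{h_i-bound-lemma}. It then handles each $T_iH_D$ with $C<i<D$ by a four-factor H\"older interpolation with weight $1/i$, chosen so that $\hi^{1/i}=2$, between $\sup|H_D|$, $\|T_i\|_1$, $\|T_i\|_2$ and $\|H_D\|_2$.

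Your route needs only the trivial $L^2$ bound for $h_D$ and the $L^2$ bound for $H_D$, and avoids the pointwise and $L^1$ estimates entirely. Both routes give a power saving, so the paper's heavier argument buys nothing extra here. Your argument also shows the lemma is an immediate consequence of Corollary~\ref{L2-T_i,H_i-bounds-lemma}. The same reasoning, $\|S_k\|_2\ge\|h_D\|_2-\|H_D\|_2$ combined with Lemma~\ref{crit-h_D-pw-lemma} at $q=1$, gives the $s=2$ lower bound without any mixed-moment estimate. This undercuts the paper's remark that this case ``requires a more careful argument''.
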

\begin{proof}
    It is enough to show the result for $\epsilon$ sufficiently small. Observe first that we may choose some $C = c\log(N)$, where $c<\frac{1}{k+1}$ will be determined later, and write
    \<\label{s=2-decomp-eq}
    \int_0^1 |h_D(\alpha)H_D(\alpha)| \d\alpha \ll \int_0^1 |h_C(\alpha)H_D(\alpha)| \d\alpha + \int_0^1 \bigg|\sum_{C<i<D} T_i(\alpha)H_D(\alpha)\bigg|\d\alpha.
    \>
    For the first integral, we write 
    \[\int_0^1 |h_C(\alpha)H_D(\alpha)| \d\alpha \ll \bigg(\sup_\alpha |H_D(\alpha)|\bigg) \int_0^1 |h_C(\alpha)|\d\alpha\]
    and apply the bounds of Corollary \ref{h_i-bound-lemma} and Lemma \ref{H_i-pw-lemma} to see that, for any $\epsilon'>0$, one has that
    \[\int_0^1 |h_C(\alpha)H_D(\alpha)|\d\alpha \ll (K^{\frac{k+3}{2(k+1)}} + N^{\epsilon'} K^{\frac{k+1}{2k}} + K^{\frac{1}{2}}N^{\frac{\Delta_k}{2}+\epsilon'})\hC\log(N).\]
    If one sets $c = \epsilon' = \frac{1}{100}\epsilon$, say, then it is clear that each term in the parentheses is $o(K^{1-2\epsilon'})$, so that
    \<\label{s=2-eq-1}
    \int_0^1 |h_C(\alpha)H_D(\alpha)|\d\alpha \ll (K^{1-2\epsilon'})\hC \log N = o(K).
    \>
    
    For the remaining integral, first write
    \[\int_0^1 \bigg|\sum_{C<i<D} T_i(\alpha)H_D(\alpha)\bigg|\d\alpha \leq \sum_{C<i<D}\int_0^1 |T_i(\alpha)H_D(\alpha)|\d\alpha.\]
    We apply H\"older's inequality to see that
    \begin{multline*}
        \int_0^1 \bigg|\sum_{C<i<D} T_i(\alpha)H_D(\alpha)\bigg|\d\alpha \leq \sum_{C<i<D} \bigg(\sup_\alpha |H_D(\alpha)|\bigg)^{\frac{1}{i}}\bigg(\int_0^1 |T_i(\alpha)|^2\d\alpha\bigg)^{\frac{i-1}{2i}} \\
        \times \bigg(\int_0^1 |T_i(\alpha)|\d\alpha\bigg)^{\frac{1}{i}}\bigg(\int_0^1|H_D(\alpha)|^2\d\alpha\bigg)^{\frac{i-1}{2i}}.
    \end{multline*}
    Now we apply the bounds of Lemmata \ref{T_i-bounds-lemma} and \ref{H_i-pw-lemma} and of Corollary \ref{L2-T_i,H_i-bounds-lemma} to see that, for any $\epsilon''>0$, the integral is bounded by
    \[(\log N) \sum_{C<i<D}  (K^{\frac{k+3}{2(k+1)}} + K^{\frac{1}{2}}N^{\frac{\Delta_k}{2}+\epsilon''})^{\frac{1}{i}}(K\hi^{1-k} + N^{\Delta_k+\epsilon''})^{\frac{i-1}{2i}}(K^{\frac{2}{k+1}} + N^{\Delta_k+\epsilon''})^{\frac{i-1}{2i}}.\]
    Upon choosing $\epsilon''$ small enough (say $\epsilon'' = \frac{1}{100}\epsilon$ again), each term in the parentheses is of order no more than $K^{1-\epsilon''}$, and so we have that
    \<\label{s=2-eq-2}
    \int_0^1 \bigg|\sum_{C<i<D} T_i(\alpha)H_D(\alpha)\bigg|\d\alpha \ll K^{1-\epsilon''}\log N \sum_{C<i<D} \hi^{\frac{1}{i}} \ll K^{1-\epsilon''}(\log N)^2 = o(K).
    \>
    Applying \eqref{s=2-eq-1} and \eqref{s=2-eq-2} to \eqref{s=2-decomp-eq}, we have the result.
\end{proof}

\section{Subcritical Moment Estimates}

The treatment of the subcritical moment estimates is similar to the approach used in \cite{sun:2023}, though we treat general $k$ and $s$. We begin by proving the desired upper bound. As mentioned previously, we prove these estimates in terms of $\Delta_k$, so that Lemma \ref{c_n-bd-lemma} then implies Theorem \ref{theta-thm}.

\begin{lemma}\label{Ls-upper-subcrit-lemma}
    Let $\epsilon>0$. For $s<1+1/k$ and $K \gg N^{(\frac{k+1}{2})\Delta_k+\epsilon}$, we have that
    \[\int_0^1 |S_k(\alpha;K)|^s \d\alpha \ll K^{\frac{s}{k+1}}.\]
\end{lemma}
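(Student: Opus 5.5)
Split $S_k(\alpha;K) = h_J(\alpha) + H_J(\alpha)$ at the dyadic index $J$ with $\hat{J} \asymp K^{\frac{1}{k+1}}$. We then use $|a+b|^s \ll |a|^s + |b|^s$ and bound the two moments separately. The short part $h_J$ should contribute the main term $K^{\frac{s}{k+1}}$, and the long tail $H_J$ should be controlled through the $L^2$ estimate of Corollary~\ref{L2-T_i,H_i-bounds-lemma}.

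\emph{Short part.} If $s\le 1$, apply H\"older to compare with the $L^1$ norm: $\int_0^1 |h_J|^s \le (\int_0^1 |h_J|)^s$. By Corollary~\ref{h_i-bound-lemma} this gives $(\hat J\log K)^s$. The logarithm is a nuisance, so instead I would sum over the dyadic pieces directly. For $s<1$, subadditivity gives
\[\int_0^1 |h_J|^s \le \sum_{i<J}\int_0^1 |T_i|^s \le \sum_{i<J}\Big(\int_0^1 |T_i|\Big)^s .\]
This still carries $\log K$, so a sharper per-piece estimate is needed. Going back to the proof of Lemma~\ref{T_i-bounds-lemma}, for $s>1$ the sum over $d$ in dyadic blocks gives
\[\int_0^1 |T_i|^s \ll \hat i^{\,s-1}\sum_{d\sim \hat i}\int_0^1 \min\{K/d^k,\|\beta\|^{-1}\}^s\,d\beta \ll \hat i^{\,s}(K\hat i^{-k})^{s-1}.\]
For $1<s<1+1/k$ the exponent of $\hat i$ is $s-k(s-1)>0$, so summing over $i<J$ is geometric and dominated by $i=J$, giving $\hat J^{\,s}(K\hat J^{-k})^{s-1} = K^{\frac{s}{k+1}}$. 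For $s\le 1$, use H\"older between $L^1$ and $L^{s_0}$ with a fixed $s_0\in(1,1+1/k)$, or simply H\"older to reduce to some $s_0\in(1,1+1/k)$; monotonicity of $L^p$ norms gives $\|h_J\|_s\le\|h_J\|_{s_0}\ll K^{\frac1{k+1}}$. The triangle inequality in $L^{s_0}$ over dyadic $i$ loses nothing, since the pieces' norms grow geometrically.

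\emph{Tail.} For $s\le 2$, H\"older gives $\int_0^1 |H_J|^s\le(\int_0^1 |H_J|^2)^{s/2}$. By Corollary~\ref{L2-T_i,H_i-bounds-lemma} this is
\[\ll \big(K\hat J^{1-k}+N^{\epsilon'}K^{\frac1k}+N^{\Delta_k+\epsilon'}\big)^{s/2}.\]
With $\hat J\asymp K^{\frac1{k+1}}$, the first term is $K^{\frac{2}{k+1}}$. The second is smaller since $\frac1k<\frac2{k+1}$, after absorbing $N^{\epsilon'}$ using $K\gg N^{\epsilon}$ and a small enough $\epsilon'$. The third is $\ll K^{\frac2{k+1}}$ exactly because $K\gg N^{(\frac{k+1}{2})\Delta_k+\epsilon}$. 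This is the observation after Corollary~\ref{L2-T_i,H_i-bounds-lemma}. Hence the tail contributes $\ll K^{\frac{s}{k+1}}$, and since $s<1+1/k<2$ this covers all cases.

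The main obstacle I expect is the short part: avoiding the $\log K$ factor that the crude $L^1$ bound introduces. The fix is to work in an $L^{s_0}$ norm with $s_0\in(1,1+1/k)$, where the dyadic contributions grow geometrically in $i$. I would carefully recheck that the per-block bound $\hat i^{\,s}(K\hat i^{-k})^{s-1}$ holds uniformly, which follows from $\int_0^1\min\{L,\|\beta\|^{-1}\}^s\,d\beta\ll L^{s-1}$ for $s>1$.
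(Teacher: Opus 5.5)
There is a genuine gap in the short part. Your per-block bound $\int_0^1|T_i|^s\ll \hi^{\,s}(K\hi^{-k})^{s-1}$ is correct, but its evaluation at the top block is not. With $\hat J=K^{\frac1{k+1}}$ one has $K\hat J^{-k}=K^{\frac1{k+1}}$, so
\[
\hat J^{\,s}(K\hat J^{-k})^{s-1}=K^{\frac{s}{k+1}}\cdot K^{\frac{s-1}{k+1}}=K^{\frac{2s-1}{k+1}},
\]
which exceeds the target by $K^{\frac{s-1}{k+1}}$ for every $s>1$. For $s\le 1$, your reduction to some fixed $s_0\in(1,1+\frac1k)$ inherits the same loss: it gives $K^{\frac{s(2s_0-1)}{s_0(k+1)}}$. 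Taking $s_0=1$ instead brings back the logarithm.

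The loss is intrinsic to the method. Applying H\"older or Minkowski over the $\asymp\hi$ values of $d$ inside $T_i$ costs a factor $\hi^{\,s-1}$. That would be sharp only if the peaks of the geometric sums $\sum_n e(nd^k\alpha)$ near the points $a/d^k$ all coincided for different $d$, and they essentially do not. What you actually need is $\int_0^1|T_i|^s\ll \hi\,(K\hi^{-k})^{s-1}$, up to factors polynomial in $D-i$, because $\sum_{i<D}\hi^{\,1-k(s-1)}K^{s-1}\asymp K^{\frac{s}{k+1}}$. No argument that separates the $d$'s by the triangle inequality can reach this.

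The missing ingredient is the $L^2$ bound $\int_0^1|T_i|^2\ll K\hi^{1-k}$ for $\hi\le K^{\frac1{k+1}}$. This comes from Corollary~\ref{L2-T_i,H_i-bounds-lemma} together with the hypothesis on $K$, and it encodes the near-orthogonality of the $d$-pieces. You then interpolate it with an $L^1$ bound via $\int|T_i|^s\le(\int|T_i|)^{2-s}(\int|T_i|^2)^{s-1}$.

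Doing this naively with $\int_0^1|T_i|\ll\hi\log K$ still leaves a factor $(\log K)^{2-s}$, since $\log(K\hi^{-k})\gg\log K$ even at $\hi=K^{\frac1{k+1}}$. The paper removes it by Fej\'er-smoothing each $T_i$ at scale $M=K^{\frac1{k+1}}2^{ik}$ and writing $T_i=Q_i-R_i$:
\begin{itemize}
\item the smoothed piece satisfies $\int_0^1|Q_i|\ll\hi\,(1+D-i)$;
\item the edge piece $R_i$ is handled in $L^2$ over intervals of length $M$;
\item the sum over $i$ is taken with H\"older weights $(D-i)$, so the polynomial factors in $D-i$ are absorbed by the geometric growth of $\hi^{\,1-k(s-1)}$.
\end{itemize}
Your treatment of the tail $H_J$ is fine and matches the paper's.
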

\begin{proof}
    We assume from now on that $1 < s \leq 1+1/k$; the case $s \leq 1$ follows from an application of H\"older's inequality. We first make the observation that
    \[\int_0^1 |S_k(\alpha;K)|^s \d\alpha \ll \int_0^1 |h_D(\alpha)|^s\d\alpha + \int_0^1 |H_D(\alpha)|^s\d\alpha,\]
    where $D = (k+1)^{-1}\log(K)$. For the second term, H\"older's inequality, Corollary \ref{L2-T_i,H_i-bounds-lemma}, and the definition \eqref{Delta-def-eq} of $\Delta_k$ imply that for some sufficiently small $\epsilon'>0$, we have
    \[\int_0^1 |H_D(\alpha)|^s\d\alpha\ll \bigg(K^{\frac{2}{k+1}} + N^{\epsilon'} K^{\frac{1}{k}} + N^{\Delta_k+\epsilon'}\bigg)^{\frac{s}{2}} \ll K^{\frac{s}{k+1}} + N^{\frac{s\Delta_k}{2}+\epsilon} \ll K^{\frac{s}{k+1}}.\]

    It remains then to bound the first term. We apply a polynomial weight, observing that
    \begin{align*}
        \int_0^1 |h_D(\alpha)|^s\d\alpha &\ll \int_0^1 \bigg|\sum_{i<D} (D-i)^{-1}(D-i)T_i(\alpha)\bigg|^s\d\alpha \\
        &\ll \bigg(\sum_{i<D} (D-i)^{-\frac{s}{s-1}}\bigg)^{s-1}\sum_{i<D} (D-i)^s\int_0^1 |T_i(\alpha)|^s\d\alpha.
    \end{align*}
    Since $s<2$, the first sum over $i$ is $O(1)$, and so this simplifies to
    \<\label{T_i-in-Ls-upper-eq}
    \int_0^1 |h_D(\alpha)|^s\d\alpha \ll \sum_{i<D} (D-i)^s\int_0^1 |T_i(\alpha)|^s\d\alpha.
    \>
    A direct application of the $L^s$-bounds of Lemma \ref{T_i-bounds-lemma} is insufficient for our needs, as the summation introduces a logarithmic factor. Instead, we use the Fej\'er kernel introduced in \eqref{Fejer-kernel-def-eq} to smooth our exponential sum.
    
    We may assume (by permitting an error term of order $O(1)$) that $K$ is an even integer, and we set $M = K^{\frac{1}{k+1}}2^{ik}$. Observe that $M = O(K)$ whenever $i<D$. As in \cite{sun:2023}, we reindex our sum and dissect $T_i(\alpha)$ as 
    \begin{multline*}
        T_i(\alpha) = \sum_{|n| < M + K/2} \min\bigg\{1,\dfrac{2M+K - |2n|}{2M}\bigg\} c_{N-K/2+n}(\hi,2\hi)e((N-K/2+n)\alpha) \\
        - \sum_{K/2 < |n| < M+K/2} \dfrac{2M+K-|2n|}{2M}c_{N-K/2+n}(\hi,2\hi)e((N-K/2+n)\alpha).
    \end{multline*}
    Upon denoting these two sums by $Q_i(\alpha)$ and $R_i(\alpha)$, respectively, we see that \eqref{T_i-in-Ls-upper-eq} may be bounded as
    \<\label{T_i-to_Q_i,R_i-eq}
    \int_0^1 |h_D(\alpha)|^s\d\alpha \ll \sum_{i<D} (D-i)^s\bigg(\int_0^1 |Q_i(\alpha)|^s\d\alpha + \int_0^1 |R_i(\alpha)|^s\d\alpha\bigg).\>
    We have that
    \[
    \int_0^1 |R_i(\alpha)|^s \d\alpha \ll \bigg(\int_0^1 |R_i(\alpha)|^2 \d\alpha\bigg)^{\frac{s}{2}} \ll \bigg(\sum_{n \in I} |c_n(\hi,2\hi)|^2\bigg)^{\frac{s}{2}},
    \]
    where $I = [N-K-M,N-K] \cup [N,N+M]$. Applying \eqref{Delta-def-eq}, then, we see that
    \<\label{R_i-in-Ls-upper-eq}
    \sum_{i<D}(D-i)^s\int_0^1 |R_i(\alpha)|^s \d\alpha \ll \sum_{i<D}(D-i)^s\bigg(M\hi^{1-k} + N^\epsilon M^{\frac{1}{k}} + N^{\Delta_k+\epsilon}\bigg)^{\frac{s}{2}} \ll K^{\frac{s}{k+1}}.
    \>
    
    Thus we may proceed to consideration of $Q_i(\alpha)$. In order to obtain sufficient bounds for $1<s<1+\frac{1}{k}$, we must interpolate first and second moment estimates for $Q_i(\alpha)$. For the first moment, we may apply the definition \eqref{c_n-def} of $c_n(y,z)$, swap the order of summation, and apply the triangle inequality to see that
    \[\int_0^1 |Q_i(\alpha)|\d\alpha \leq \sum_{\hi \leq d \leq 2\hi} \int_0^1 \Bigg|\sum_{\substack{|n| < M + K/2 \\ n \equiv K/2-N \mmod{d^k}}} \min\bigg\{1,\dfrac{2M+K - |2n|}{2M}\bigg\}e(n\alpha)\Bigg|\d\alpha.\]
    This new exponential sum is an approximate Fej\'er kernel, which may easily estimated (see (2.5) in \cite{balog-ruzsa:2001}) to yield
    \begin{align*}
        \int_0^1 |Q_i(\alpha)|\d\alpha &\ll \sum_{\hi \leq d \leq 2\hi} \int_0^1 \Bigg|\min\bigg\{\dfrac{M+K}{d^k},\dfrac{1}{\|d^k\alpha\|},\dfrac{d^k}{M\|d^k\alpha\|^2}\bigg\} + O(1)\Bigg| \d\alpha \\
        &\ll \sum_{\hi \leq d \leq 2\hi} \bigg(\log\bigg(\dfrac{K}{M}\bigg) + 1\bigg) \ll \hi\log(K^{\frac{k}{k+1}}\hi^{-k}).
    \end{align*}

    On the other hand, we have for $\epsilon'>0$ sufficiently small that
    \[\int_0^1 |Q_i(\alpha)|^2 \d\alpha \ll \sum_{N-K-M < n \leq N+M} |c_n(\hi,2\hi)|^2 \ll K\hi^{1-k} + N^{\epsilon'} K^{\frac{1}{k}} + N^{\Delta_k+\epsilon'}\]
    by Lemma \ref{c_n-bd-lemma}. In fact, since $i<D$, we have that $K\hi^{1-k} \gg K\hD^{1-k} = K^{\frac{2}{k+1}} \gg N^{\Delta_k+\epsilon'}$, and so
    \[\int_0^1 |Q_i(\alpha)|^2 \d\alpha \ll K\hi^{1-k}.\] 
    Thus upon interpolating with the $L^1$-bound we have that
    \begin{align*}
        \sum_{i<D}(D-i)^s\int_0^1 |Q_i(\alpha)|^s \d\alpha &\ll \sum_{i<D}(D-i)^s\bigg(\int_0^1 |Q_i(\alpha)| \d\alpha\bigg)^{2-s}\bigg(\int_0^1 |Q_i(\alpha)|^2\d \alpha\bigg)^{s-1} \\
        &\ll \sum_{i<D}(D-i)^s\hi^{2-s}(k(D-i))^{2-s}K^{s-1}\hi^{(1-k)(s-1)} \\
        &\ll K^{s-1}\sum_{i<D}\hi^{(1-k(s-1))}(D-i)^2.
    \end{align*}
    Since $s<1+\frac{1}{k}$, this is $O(K^{\frac{s}{k+1}})$, and so combining this with \eqref{T_i-to_Q_i,R_i-eq} and \eqref{R_i-in-Ls-upper-eq}, we have the desired result.
\end{proof}

The argument for the lower bound similarly is a direct translation of \cite{sun:2023}, the only change being the insertion of our new bound. Because the argument is essentially the same, we leave out the finer details.

\begin{lemma}\label{Ls-lower-subcrit-lemma}
    Let $\epsilon>0$. For $s<1+\frac{1}{k}$ and $K \gg N^{(\frac{k+1}{2})\Delta_k+\epsilon}$, we have that 
    \[\int_0^1 |S_k(\alpha;K)|^s \d\alpha \gg K^{\frac{s}{k+1}}.\]
\end{lemma}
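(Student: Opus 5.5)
First reduce to $s=1$. For $s<1$, Hölder's inequality interpolating between $L^s$ and $L^{1+\frac{1}{k}-\eta}$, together with the upper bound of Lemma \ref{Ls-upper-subcrit-lemma}, gives the $L^s$ lower bound from an $L^1$ lower bound. For $1<s<1+\frac{1}{k}$ we have $\int|S_k|^s\ge(\int|S_k|)^s$. So it suffices to show $\int_0^1|S_k(\alpha;K)|\d\alpha\gg K^{\frac{1}{k+1}}$. By \eqref{lower-bd-smoothing-ineq}, it is enough to prove this for the Fejér-smoothed sum $F*S_k$.

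Following Balog--Ruzsa and \cite{sun:2023}, the plan is to detect a major arc contribution. For each prime (or squarefree) $q\asymp K^{\frac{1}{k+1}}$ and each $a$ with $(a,q)=1$, the smoothed sum has a main term near $\alpha=a/q^k$. The arithmetic progressions mod $q^k$ produce a peak of size $\asymp K q^{-k}\asymp K^{\frac{1}{k+1}}$ on an arc of width $\asymp 1/K$.

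Split $S_k=h_D+H_D$ with $D=(k+1)^{-1}\log K$ as in Lemma \ref{Ls-upper-subcrit-lemma}. The tail $H_D$ has $\int|H_D|\le(\int|H_D|^2)^{1/2}\ll K^{\frac{1}{k+1}}$ by Corollary \ref{L2-T_i,H_i-bounds-lemma}, using $K\gg N^{\frac{k+1}{2}\Delta_k+\epsilon}$. However, this is the same order as the target, so I would move the cut to $D'=D+c$ for a large constant $c$. This makes the tail $\ll 2^{-c(k-1)/2}K^{\frac{1}{k+1}}$, a small constant multiple of the main term.

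For the head, I would test against a function. Take $\Phi(\alpha)=\sum_{q}\sum_{a}\, F(\alpha-a/q^k)$-type weights concentrated near $a/q^k$ with $q\in(\hD',2\hD']$, normalised so that $\|\Phi\|_\infty\ll 1$. Then compute $\int (F*h_{D'})\overline{\Phi}$ exactly. The terms with $d^k$ and $q^k$ interact: when $d=q$ the sum over $a$ yields a Ramanujan-sum main term $\asymp \sum_q K q^{-k}\cdot q^{k}/q^{k}$ type of size $\gg K^{\frac{1}{k+1}}$. Terms with $d\neq q$, $d<q$ contribute a smaller amount because the $\mu(d)$ sum is controlled by the congruence structure.

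The main obstacle is showing that the cross terms (various $d\le 2\hD'$ against the spike set) do not cancel the main term. This is exactly the computation in \cite{sun:2023}, and the only input from $N$ there is the $\ell^2$ bound for $c_n$, now replaced by \eqref{Delta-def-eq}. Since $\|\Phi\|_\infty\ll1$, we get $\int|F*S_k|\ge|\int (F*S_k)\overline\Phi|\ge$ main term $-\int|H_{D'}|$, which gives the claim.
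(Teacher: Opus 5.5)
Your reduction to $s=1$ and the passage to $F*S_k$ match the paper. The gap is the lower bound for $\int (F*h_{D'})\overline{\Phi}$: you assert it rather than prove it, and it fails at the scale you chose.

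As written, with $q\in(\widehat{D'},2\widehat{D'}]$, the head has no spike at $a/q^k$ at all. The computation in Lemma \ref{crit-h_D-pw-lemma} shows that the peak of size $K/q^k$ at $a/q^k$ ($q$ squarefree, $(a,q)=1$) comes from the $d$ with $q\mid d$, and no $d\le\widehat{D'}$ is divisible by such a $q$. So there is no ``$d=q$'' main term.

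More seriously, even with $q$ just below the cut, the scale $Q=2^cK^{1/(k+1)}$ is wrong. The arcs of width $1/K$ about the $\asymp Q^{k+1}$ points $a/q^k$ have total measure $\asymp Q^{k+1}/K=2^{c(k+1)}$, so they overlap with multiplicity about $2^{c(k+1)}$ and the peaks are not isolated. Indeed, if they were, each $q$ would contribute $\asymp\varphi(q^k)/q^k\asymp 1$, giving $\int_0^1|F*S_k|\gg 2^cK^{1/(k+1)}$, which contradicts Lemma \ref{Ls-upper-subcrit-lemma} once $c$ is large.

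Normalising to $\|\Phi\|_\infty\ll1$ then costs this multiplicity. Heuristically the main term drops to about $2^{-ck}K^{1/(k+1)}$, or at best $2^{-c(k-1)/2}K^{1/(k+1)}$ even granting square-root cancellation inside $\Phi$. That never beats your tail bound $2^{-c(k-1)/2}K^{1/(k+1)}$. So enlarging $c$ to shrink the tail in $L^1$ pushes the test function into exactly the regime where the main term disappears. The appeal to \cite{sun:2023} does not cover this, since that computation concerns only $d\le\epsilon'K^{1/(k+1)}$.

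The missing idea is to exploit the small \emph{support} of the test region rather than the smallness of the tail. The paper keeps the cut at $K^{1/(k+1)}$ and works with $d\le\epsilon'K^{1/(k+1)}$, where the arcs $\mathcal{X}_d$ have total measure $\ll(\epsilon')^{k+1}$. It restricts to $\mathcal{Y}_d\subset\mathcal{X}_d$, on which $|G_d|\ge K/(2d^k)$ dominates $\sum_{d'\ne d}|G_{d'}|$. Sun's lemmas give that the $\mathcal{Y}_d$ are disjoint and $|\mathcal{Y}_d|\gg|\mathcal{X}_d|\asymp d^k/K$, so the main term is $\gg\sum_{d\le\epsilon'K^{1/(k+1)}}1=\epsilon'K^{1/(k+1)}$.

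The tail, with coefficients $c_n(K^{1/(k+1)},N^{1/k})$, is then handled by Cauchy--Schwarz on $\mathcal{Y}$ together with \eqref{Delta-def-eq}. This gives at most $|\mathcal{Y}|^{1/2}K^{1/(k+1)}\ll(\epsilon')^{(k+1)/2}K^{1/(k+1)}$, and since $(k+1)/2>1$ the main term wins for small $\epsilon'$. In your duality language, take $\Phi=\overline{\mathrm{sgn}}\,1_{\mathcal{Y}}$ and pair it with the tail using $\|\Phi\|_2$ rather than $\|\Phi\|_\infty$. Once you do that, the shift $D\to D+c$ is unnecessary.
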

\begin{proof}
    It is sufficient via H\"older convexity to prove this result for $s=1$. To see this, suppose the result holds for $s=1$, and observe that
    \[\int_0^1 |S_k(\alpha;K)|^s \d\alpha \gg \bigg(\int_0^1 |S_k(\alpha;K)| \d\alpha\bigg)^s \gg K^{\frac{s}{k+1}}\]
    when $s>1$, and for $s < 1$ we may apply H\"older's inequality to obtain
    \[N^{\frac{1}{k+1}} \ll \int_0^1 |S_k(\alpha;K)| \d\alpha \ll \bigg(\int_0^1 |S_k(\alpha;K)|^s \d\alpha\bigg)^{\frac{\delta}{1+\delta-s}}\bigg(\int_0^1 |S_k(\alpha;K)|^{1+\delta} \d\alpha\bigg)^{\frac{1-s}{1+\delta-s}},\]
    for any $0<\delta<\frac{1}{k}$, which then implies the result by an application of Lemma \ref{Ls-upper-subcrit-lemma}.

    Thus we restrict to the case $s=1$. As observed in \eqref{lower-bd-smoothing-ineq}, we may consider instead the smoothed sum $(F*S_k)(\alpha)$. For $q \in \N$, define
    \[
    F_q(\alpha) = q^{-1}\sum_{a=1}^q F\bigg(\alpha - \dfrac{a}{q}\bigg) = \sum_{\substack{N-K < n \leq N \\ n \equiv 0 \mmod{q}}} \bigg(1-\dfrac{|2N-K-2n|}{K}\bigg)e(n\alpha).
    \]
    Then using the identity \eqref{mu_k-conv-id-eq} we may write
    \begin{align*}
        (F*S_k)(\alpha) &= \sum_{N-K < n \leq N} \bigg(1-\dfrac{|2N-K-2n|}{K}\bigg)\mu_k(n)e(n\alpha) \\
        &= \sum_{d \leq K^{1/(k+1)}} \mu(d)F_{d^k}(\alpha) + \sum_{N-K < n \leq N} \bigg(1-\dfrac{|2N-K-2n|}{K}\bigg) c_n(K^{\frac{1}{k+1}},N^{\frac{1}{k}}) e(n\alpha),
    \end{align*}
    Our intention is to restrict the region of integration to some region $\calY \subset [0,1]$, to be defined later, on which we can ensure the first sum is large. With this in mind, we can dispose of the second sum, applying the Cauchy-Schwarz inequality, Parseval's identity, and the definition \eqref{Delta-def-eq} of $\Delta_k$ to yield
    \<\label{L1-lower-decomp-1-eq}
        \int_0^1 (F * S_k)(\alpha) \d\alpha \gg \int_\calY \bigg|\sum_{d \leq K^{1/(k+1)}} \mu(d)F_{d^k}(\alpha)\bigg| \d\alpha - |\calY|^{\frac{1}{2}}K^{\frac{1}{k+1}},
    \>
    provided $K \gg N^{\frac{\Delta_k(k+1)}{2}+\epsilon}.$

    Now we approach the first term. We begin by expanding the definition of $F_{d^k}(\alpha)$ and sorting according to $m|d$ to obtain
    \[\sum_{d \leq K^{1/(k+1)}} \mu(d)F_{d^k}(\alpha) = \sum_{d \leq K^{1/(k+1)}} \sum_{m|d} \sum_{\substack{1 \leq a \leq d^k \\ m^k|a}}\dfrac{\mu(d)}{d^k}\mu_k\bigg(\bigg(\frac{a}{m^k},\frac{d^k}{m^k}\bigg)\bigg)F\bigg(\alpha-\dfrac{a}{d^k}\bigg).\]
    Making the change of variables $d = um$ and $a = vm^k$, we may rewrite this as
    \<\label{L1-lower-first-int-decomp}
    \sum_{d \leq K^{1/(k+1)}} \mu(d)F_{d^k}(\alpha) = \sum_{u \leq K^{1/(k+1)}} \mu(d) b_d G_d(\alpha),
    \>
    where
    \<\label{b_d-ineq}
    b_d = \sum_{\substack{m \leq K^{1/(k+1)}/d \\ (m,d) = 1}} \dfrac{\mu(m)}{m^k}
    \>
    and
    \[G_d(\alpha) = \dfrac{1}{d^k} \sum_{\substack{1 \leq a \leq d^k \\ (a,d^k) \;k\text{-free}}} F\bigg(\alpha-\dfrac{a}{d^k}\bigg).\]
    One may immediately see that
    \[\frac{1}{3} \leq 1-(\zeta(k)-1) \leq 1-\sum_{d \geq 2} d^{-k} \leq b_d \leq 1+\sum_{d \geq 2} d^{-k} \leq \dfrac{\pi^2}{6} \leq \dfrac{5}{3},\]
    so that $b_d$ is essentially constant. 
    
    The rest of the argument has no novelty and is somewhat involved, and so we refer the reader to \cite{sun:2023} for a more detailed treatment. In particular, we define
    \[\calX_d = \bigcup_{\substack{1 \leq a \leq d^k \\ (a,d^k) \; k\text{-free}}} \bigg[\frac{a}{d^k}-\frac{1}{2K},\frac{a}{d^k}+\frac{1}{2K}\bigg].\]
    From this, we define $\calY_d$ to be the subset of $\calX_d$ in which 
    \[\sum_{\substack{d' \leq K^{1/(k+1)} \\ d' \neq d}} |G_{d'}(\alpha)| \leq \frac{K}{20 d^k}.\]
    While the sets $\calX_d$ are not necessarily disjoint, Lemma 4.2 of \cite{sun:2023} shows that the $\calY_d$ are disjoint. We also have by Lemma 4.1 of \cite{sun:2023} that for $\alpha \in \calX_d$, we have
    \[|G_d(\alpha)| \geq \frac{K}{2d^k}(1+o(1)).\]
    Finally, the arguments of Lemma 4.3 in \cite{sun:2023} allow us to conclude that $|\calY_d| \gg |\calX_d|$ whenever $d \leq \epsilon' K^{\frac{1}{k+1}}$ for some sufficiently small $\epsilon'>0$.
    
    As a result, we may conclude that, upon fixing some sufficiently small $\epsilon'>0$ and setting $\calY = \bigcup_{d \leq \epsilon' K^{1/(k+1)}} \calY_d$, we may return to the integral in \eqref{L1-lower-decomp-1-eq} and apply \eqref{L1-lower-first-int-decomp}, \eqref{b_d-ineq}, and the results of the previous paragraph to see that
    \<\label{subcrit-lower-Y-int-eq}
    \int_\calY \bigg|\sum_{d \leq K^{1/(k+1)}} \mu(d)F_{d^k}(\alpha)\bigg|\d\alpha \gg \sum_{d \leq \epsilon' K^{1/(k+1)}} \dfrac{K}{d^k}|\calY_d| \gg \epsilon' K^{\frac{1}{k+1}}.
    \>
    On the other hand, $|\calY| \ll \sum_{d \leq \epsilon' K^{1/(k+1)}}|\calX_d| \leq (\epsilon')^{k+1}$, and so 
    \<\label{subcrit-lower-term2-bd-eq}
    |\calY|^{\frac{1}{2}}K^{\frac{1}{k+1}} \ll (\epsilon')^{\frac{k+1}{2}}K^{\frac{1}{k+1}}.
    \>
    Thus we have the desired result by \eqref{lower-bd-smoothing-ineq}, \eqref{L1-lower-decomp-1-eq}, \eqref{subcrit-lower-Y-int-eq}, and \eqref{subcrit-lower-term2-bd-eq}.
\end{proof}

Lemmata \ref{Ls-upper-subcrit-lemma} and \ref{Ls-lower-subcrit-lemma} together imply the case $s<1+\frac{1}{k}$ of Theorem \ref{Delta-theorem}.

\section{Higher Moment Estimates}

The higher moments are treated with little distinction from their complete interval counterparts in \cite{keil:2013}, though there are some important new considerations we must make. We start with a mild refinement of an argument in \cite{keil:2013}, introduced by the author in the function field setting \cite{doyle:2026}, which allows us to trim some of the excess in the upper bound for $s=1+\frac{1}{k}$.

\begin{lemma}\label{crit-upr-lemma}
    Let $\epsilon>0$. For $K \gg N^{(\frac{k+1}{2})\Delta_k+\epsilon}$, one has that
    \[\int_0^1 |S_k(\alpha;K)|^{1+\frac{1}{k}} \d\alpha \ll K^{\frac{1}{k}}(\log K)^{2-\frac{1}{k}}.\]
\end{lemma}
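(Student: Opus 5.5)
We split at $D=(k+1)^{-1}\log K$ exactly as in Lemma \ref{Ls-upper-subcrit-lemma}, writing $s=1+\frac1k$ and
\[\int_0^1|S_k(\alpha;K)|^{s}\d\alpha \ll \int_0^1|h_D(\alpha)|^s\d\alpha+\int_0^1|H_D(\alpha)|^s\d\alpha .\]

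\emph{The tail $H_D$.} We use H\"older's inequality together with Corollary \ref{L2-T_i,H_i-bounds-lemma}. Since $K\gg N^{(\frac{k+1}{2})\Delta_k+\epsilon}$, the term $K\hD^{1-k}=K^{\frac{2}{k+1}}$ dominates. This gives
\[\int_0^1|H_D(\alpha)|^s\d\alpha\ll K^{\frac{s}{k+1}}=K^{\frac1k},\]
which carries no logarithm.

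\emph{The head $h_D$.} In the subcritical proof the weights $(D-i)$ were chosen to make the H\"older sum converge. Here we refine that choice so that only $(\log K)^{1-\frac1k}$ is lost in the H\"older step, with one further logarithm coming from the $L^1$-input. Concretely, we write $h_D=\sum_{i<D}T_i$ and apply H\"older with exponents $s$ and $s'=k+1$:
\[\int_0^1|h_D|^s\d\alpha\le \Big(\sum_{i<D}w_i^{-(k+1)}\Big)^{\frac1k}\sum_{i<D}w_i^{s}\int_0^1|T_i|^s\d\alpha .\]
Taking $w_i=1$ makes the first factor $D^{1/k}\asymp(\log K)^{1/k}$.

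We then bound each $\int_0^1|T_i|^s\d\alpha$ by the Fej\'er-smoothing decomposition $T_i=Q_i-R_i$ with $M=K^{\frac1{k+1}}2^{ik}$:
\begin{itemize}
\item The $R_i$ contribution is handled exactly as in \eqref{R_i-in-Ls-upper-eq}.
\item For $Q_i$ we interpolate $\int_0^1|Q_i|\d\alpha\ll\hi\,\log(K^{\frac{k}{k+1}}\hi^{-k})\asymp \hi(D-i)$ against $\int_0^1|Q_i|^2\d\alpha\ll K\hi^{1-k}$.
\end{itemize}
At $s=1+\frac1k$ the exponent of $\hi$ vanishes, and
\[\int_0^1|Q_i|^s\d\alpha\ll (\hi(D-i))^{1-\frac1k}(K\hi^{1-k})^{\frac1k}=K^{\frac1k}(D-i)^{1-\frac1k}.\]
Summing over $i<D$ gives $K^{\frac1k}D^{2-\frac1k}$. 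Combined with the H\"older factor $D^{1/k}$, this would give $K^{\frac1k}D^{2}$, which is too much.

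\emph{The main obstacle.} The refinement from \cite{doyle:2026} should be to avoid H\"older over $i$ and instead use $|h_D|^s\le |h_D|\,|h_D|^{1/k}$, or more precisely to bound $|\sum_i T_i|^{s}\ll \sum_i |T_i|\,|h_{i+1}|^{1/k}$ up to constants (a telescoping/convexity inequality $|a+b|^s-|a|^s\ll |b||a|^{s-1}+|b|^s$). This yields
\[\int_0^1|h_D|^s\d\alpha\ll\sum_{i<D}\Big(\int_0^1|T_i||h_i|^{\frac1k}\d\alpha+\int_0^1|T_i|^s\d\alpha\Big).\]
The second sum is $\ll K^{\frac1k}D^{2-\frac1k}$ by the computation above. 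For the cross terms we use H\"older with exponents $s$ and $k+1$:
\[\int_0^1|T_i||h_i|^{\frac1k}\d\alpha\le \Big(\int_0^1|T_i|^s\d\alpha\Big)^{\frac{k}{k+1}}\Big(\int_0^1|h_i|^s\d\alpha\Big)^{\frac1{k+1}}.\]
An induction on $i$ with hypothesis $\int_0^1|h_i|^s\d\alpha\ll K^{\frac1k}i^{2-\frac1k}$ then closes, giving $K^{\frac1k}(\log K)^{2-\frac1k}$. Verifying that the induction constants do not blow up, and checking that the $R_i$ pieces do not spoil it, is the delicate point. If the induction fails, the fallback is to apply the smoothing to $h_i$ directly, using the $L^1$ bound $\int_0^1|h_i|\d\alpha\ll \hi\log K$ from Corollary \ref{h_i-bound-lemma}.
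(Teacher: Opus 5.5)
You have a genuine gap in the cross terms. Your telescoping inequality $|h_D|^{s}\ll\sum_{i<D}\big(|T_i||h_i|^{1/k}+|T_i|^{s}\big)$ is valid. Your diagonal bound $\sum_{i<D}\int_0^1|T_i|^{s}\d\alpha\ll K^{1/k}D^{2-1/k}$ is also correct. The proposed induction on the cross terms, however, does not close. Inserting $\int_0^1|T_i|^{s}\d\alpha\ll K^{1/k}(D-i)^{1-1/k}$ and the hypothesis $\int_0^1|h_i|^{s}\d\alpha\ll K^{1/k}i^{2-1/k}$ into your H\"older step gives
\[\sum_{i<D}\int_0^1|T_i||h_i|^{\frac1k}\d\alpha\ll K^{\frac1k}\sum_{i<D}(D-i)^{\frac{k-1}{k+1}}\,i^{\frac{2k-1}{k(k+1)}}\asymp K^{\frac1k}D^{2-\frac{1}{k(k+1)}},\]
which exceeds $K^{1/k}D^{2-1/k}$. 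Iterating that recursion to its fixed point gives only $K^{1/k}D^{2}$, which is just Keil's bound. The loss comes from measuring both factors in $L^{s}$. At the critical exponent, $\int_0^1|T_i|^{s}\d\alpha$ already carries $(D-i)^{1-1/k}$, and $\int_0^1|h_i|^{s}\d\alpha$ carries the accumulated loss. Meanwhile the log-free bound $\int_0^1|T_i|^2\d\alpha\ll K\hi^{1-k}$ is never used in the cross terms. Your fallback names the right input, $\int_0^1|h_i|\d\alpha\ll\hi\log K$, but not how to combine it with the rest.

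The missing step is a three-factor H\"older inequality that puts $h_i$ in $L^1$ and splits $T_i$ between $L^2$ and $L^1$:
\[\int_0^1|T_i||h_i|^{\frac1k}\d\alpha\le\Big(\int_0^1|T_i|^2\d\alpha\Big)^{\frac1k}\Big(\int_0^1|T_i|\d\alpha\Big)^{1-\frac2k}\Big(\int_0^1|h_i|\d\alpha\Big)^{\frac1k}\ll(K\hi^{1-k})^{\frac1k}(\hi\log K)^{1-\frac1k}=K^{\frac1k}(\log K)^{1-\frac1k}.\]
The powers of $\hi$ cancel exactly, so each block loses only $(\log K)^{1-1/k}$. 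The hypothesis on $K$ enters through $\int_0^1|T_i|^2\d\alpha\ll K\hi^{1-k}$ for $\hi\le K^{1/(k+1)}$. Summing over $i<D$ gives $K^{1/k}(\log K)^{2-1/k}$, and with this replacement your telescoping argument is complete.

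This three-factor bound is exactly the paper's estimate for $V_1(j)$. The paper does not telescope. It writes $|h_D|^{s}\le\sum_{j<D}|T_j||h_D|^{1/k}$ and splits $h_D=h_j+\sum_{j\le i<D}T_i$. It then bounds the tail term $V_2(j)$ by the analogous H\"older with exponents $(2k,\frac{k}{k-1},2k)$ together with the $L^2$ bound for $\sum_{j\le i<D}T_i$. Your telescoping route trades that $V_2$ term for the diagonal terms $\int_0^1|T_i|^{s}\d\alpha$, which you already control.
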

\begin{proof}
    We begin by dissecting $S_k(\alpha;K)$ as in the previous section, writing
    \<\label{crit-decomp-eq-1}
    \int_0^1 |S_k(\alpha;K)|^{1+\frac{1}{k}} \d\alpha \ll \int_0^1 |h_D(\alpha)|^{1+\frac{1}{k}} \d\alpha + \int_0^1 |H_D(\alpha)|^{1+\frac{1}{k}} \d\alpha,
    \>
    where as before we have that $D = (k+1)^{-1}\log(K)$. Applying H\"older's inequality and Corollary \ref{L2-T_i,H_i-bounds-lemma}, we have upon choosing $\epsilon'>0$ small enough that
    \[\int_0^1 |H_D(\alpha)|^{1+\frac{1}{k}} \d\alpha \ll (K\hD^{1-k} + N^{\epsilon'} K^{\frac{1}{k}} + N^{\Delta_k+\epsilon'})^{\frac{k+1}{2k}} \ll K^{\frac{1}{k}}\]
    by the assumption $K \gg N^{(\frac{k+1}{2})\Delta_k+\epsilon}$.

    We turn to the treatment of the first term in \eqref{crit-decomp-eq-1}. By an application of the triangle inequality, we may write
    \[\int_0^1 |S_k(\alpha;K)|^{1+\frac{1}{k}} \d\alpha \ll \log(X)\max_{j<D}\bigg( \int_0^1 |T_j(\alpha)|\bigg|h_j(\alpha)+\sum_{j\leq i<D}T_i(\alpha)\bigg|^{\frac{1}{k}} \d\alpha\bigg).\]
    An application of the bound $|x+y|^p \ll |x|^p + |y|^p$ then shows that
    \<\label{crit-upper-decomp-eq-2}
    \int_0^1 |S_k(\alpha;K)|^{1+\frac{1}{k}}\d\alpha \ll \log(X)\max_{j<D} \bigg(V_1(j)+V_2(j)\bigg),\>
    where
    \[
    V_1(j) = \int_0^1 |T_j(\alpha)||h_j(\alpha)|^{\frac{1}{k}}\d\alpha \quad \text{and} \quad V_2(j) = \int_0^1|T_j(\alpha)|\bigg|\sum_{j\leq i<D}T_i(\alpha)\bigg|^{\frac{1}{k}} \d\alpha.
    \]
    Precise usage of H\"older's inequality then yield
    \[V_1(j) \ll \bigg(\int_0^1 |T_j(\alpha)|^2\d\alpha\bigg)^{\frac{1}{k}}\bigg(\int_0^1 |T_j(\alpha)|\d\alpha\bigg)^{1-\frac{2}{k}}\bigg(\int_0^1|h_j(\alpha)|\d\alpha\bigg)^{\frac{1}{k}}\]
    and
    \[V_2(j) \ll \bigg(\int_0^1 |T_j(\alpha)|^2\d\alpha\bigg)^{\frac{1}{2k}}\bigg(\int_\T |T_j(\alpha)|\d\alpha\bigg)^{1-\frac{1}{k}}\bigg(\int_0^1\bigg|\sum_{j\leq i<D}T_i(\alpha)\bigg|^{2}d\alpha\bigg)^{\frac{1}{2k}}.\]
    Lemmata \ref{T_i-bounds-lemma} and \ref{h_i-bound-lemma} and Corollary \ref{L2-T_i,H_i-bounds-lemma} then beget the estimate
    \[
    \max\{V_1(j),V_2(j)\} \ll K^{\frac{1}{k}}\log(K)^{1-\frac{1}{k}} \qquad (j<D),
    \]
    under the observation that the upper bound in Corollary \ref{L2-T_i,H_i-bounds-lemma} simplifies to $K\hi^{1-k}$ when we have $\hi \ll K^{\frac{1}{k+1}}$. Applying this to \eqref{crit-upper-decomp-eq-2} yields the result.
\end{proof} 

The lower bound
\[\int_0^1 |S_k(\alpha;K)|^{1+\frac{1}{k}} \d\alpha \gg K^{\frac{1}{k}}\]
follows from Lemma \ref{Ls-lower-subcrit-lemma}, but we wish to improve this by a logarithmic factor. A na\"ive adoption of the methods from the case $K=N$ given in \cite{keil:2013} is stopped again by the barrier $K\gg N^{\frac{1}{2}+\epsilon}$, and so we must be a bit more careful in order to prove the result under the weaker restriction $K \gg N^{(\frac{k+1}{2})\Delta_k+\epsilon}$. 

\begin{lemma}\label{crit-h_D-pw-lemma}
    For $c<\frac{1}{2(k+1)}$, let $0\leq a < q^k \leq K^c$ be integers such that $(a,q^k) = 1$ and $q$ is squarefree, and let $|\alpha-\frac{a}{q^k}|< \frac{1}{100K}$. Then one has that $|h_D(\alpha)| \gg \frac{K}{q^k}$ for $N$ sufficiently large, where the implicit constant depends only on $k$.
\end{lemma}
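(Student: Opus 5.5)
Write $\alpha = \frac{a}{q^k}+\beta$ with $|\beta|<\frac{1}{100K}$. Since $D=(k+1)^{-1}\log K$, $h_D(\alpha)$ only involves $d\le \hD = K^{\frac{1}{k+1}}$ (up to a harmless dyadic boundary), and swapping sums via \eqref{c_n-def} gives
\[
h_D(\alpha)=\sum_{d< \hD}\mu(d)\sum_{\substack{N-K<n\le N\\ d^k\mid n}} e(n\alpha)
=\sum_{d<\hD}\mu(d)\sum_{\frac{N-K}{d^k}<m\le \frac{N}{d^k}} e\Big(m d^k\frac{a}{q^k}\Big)e(md^k\beta).
\]
I would split the $d$-sum according to whether $q\mid d$ or not. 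If $q\mid d$, then $d^k a/q^k\in\Z$, and the inner sum is $\sum_m e(md^k\beta)$. Since $|d^k\beta|\cdot K/d^k\le 1/100$, every phase $e(nd^k\beta)$ lies within a small arc of the value at the midpoint, so the inner sum equals $\frac{K}{d^k}(1+O(1/50))$ times a common unimodular factor $e((N-K/2)\beta)$, up to $O(1)$. Writing $d=qu$ with $(u,q)=1$ (as $\mu(d)\neq0$ forces $q$ squarefree and coprime to $u$), the contribution is
\[
e\big((N-\tfrac K2)\beta\big)\frac{K\mu(q)}{q^k}\sum_{\substack{u< \hD/q\\ (u,q)=1}}\frac{\mu(u)}{u^k}\,(1+O(1/50)) + O(\hD).
\]
The $u$-sum is a quantity like $b_d$ in \eqref{b_d-ineq}, bounded below by $2-\zeta(k)\ge 1/3$. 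Including the error factors, it is at least $\frac13-\frac{1}{50}\cdot\frac53>\frac14$ in absolute value. So this part is $\gg K/q^k$, while the $O(\hD)=O(K^{\frac1{k+1}})$ error is negligible because $q^k\le K^c$ with $c<\frac{1}{2(k+1)}$.

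If $q\nmid d$, write $g=(d,q)$. Then $d^k a/q^k$ reduces to a fraction $a'/r$ with denominator $r=(q/g)^k>1$; the denominator is exact since $q$ is squarefree and $(a,q)=1$. Hence $\|d^k a/q^k\|\ge 1/q^k$. Since $|d^k\beta|\le d^k/(100K)$, this shift is at most $1/(2q^k)$ whenever $d^k\le 50K/q^k$. In that range $\|d^k\alpha\|\ge 1/(2q^k)$, and the geometric sum is $\ll q^k$. Summing over $d<\hD$ gives $\ll q^k\hD\le K^{c+\frac1{k+1}}$, which is $o(K/q^k)$ as long as $K^{2c+\frac1{k+1}}=o(K)$, which holds for $c<\frac{1}{2(k+1)}$. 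All $d<\hD$ satisfy $d^k\le K^{\frac{k}{k+1}}\le 50K/q^k$ because $q^k\le K^{c}\le K^{\frac1{k+1}}$.

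The main obstacle is the first case: making sure the Möbius-weighted tail does not cancel the main term. Concretely, one has to show that the $u$-sum stays bounded away from zero uniformly in $q$, and that the $\beta$-perturbation together with the truncation at $\hD/q$ cannot destroy this. I would handle it exactly as in the bound on $b_d$ above, tracking the $1+O(1/50)$ factors explicitly. Combining the two cases then gives $|h_D(\alpha)|\gg K/q^k$ for $N$ large.
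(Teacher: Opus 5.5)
Your proposal is correct and is essentially the paper's argument: the $d$ with $q\mid d$ produce the main term $\frac{K\mu(q)}{q^k}\sum_{(u,q)=1}\mu(u)u^{-k}$, which is bounded below via $2-\zeta(k)$, while all other $d$ contribute only $O(q^kK^{\frac{1}{k+1}})$. The differences are minor. You bound the $q\nmid d$ terms by $\|d^k\alpha\|^{-1}\ll q^k$ rather than by splitting into residue classes modulo $q^k$ and using orthogonality. You also track the common phase $e((N-\frac{K}{2})\beta)$ with explicit constants; the perturbation is really up to $\frac{\pi}{100}$ rather than $\frac{1}{50}$, but the margin above $\frac14$ survives. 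This makes explicit the non-cancellation that the paper passes over with ``$\asymp$'', and it gives the correct normalization $K/q^k$, where the paper's displayed main term carries a stray factor $q^{-k}$.
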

\begin{proof}
    Write $\alpha = \frac{a}{q^k}+\beta$ for $|\beta|<\frac{1}{100K}$. Now by definition, we have that
    \[h_D(\alpha) = \sum_{d\leq K^{1/(k+1)}} \mu(d) \sum_{\frac{N-K}{d^k} < n \leq \frac{N}{d^k}} e\bigg(\frac{nd^ka}{q^k}+nd^k\beta\bigg).\]
    We may then sort the inner sum according to congruence modulo $q^k$, which gives us that
    \[h_D(\alpha) = \sum_{d\leq K^{1/(k+1)}} \mu(d) \bigg(\sum_{0 \leq \ell<q^k} e(\ell d^k\alpha)\sum_{0 \leq m \leq \frac{K}{d^kq^k}} e(mq^kd^k\beta) + O(q^k)\bigg).\]
    Examining the sum over $\ell$, we see that
    \[\bigg|\sum_{0\leq \ell<q^k} e(\ell d^k\alpha) - \sum_{0\leq \ell<q^k} e\bigg(\frac{\ell d^k a}{q^k}\bigg)\bigg| = \bigg|\sum_{0 \leq \ell<q^k} e\bigg(\frac{\ell d^k a}{q^k}\bigg)(e(\ell d^k\beta)-1)\bigg| \ll |q^k d^k\beta|.\]
    Thus our expression for $h_D(\alpha)$ may be written as
    \[h_D(\alpha) = \sum_{d\leq K^{1/(k+1)}} \mu(d) \bigg(\sum_{0 \leq \ell<q^k}e\bigg(\frac{\ell d^ka}{q^k}\bigg) + O(q^kd^k|\beta|)\bigg)\sum_{0\leq m \leq \frac{K}{d^kq^k}} e(mq^kd^k\beta) + O(q^kK^{\frac{1}{k+1}}).\]
    
    Similarly, since we have set $|\beta|<\frac{1}{100K}$, we have that the argument of the innermost exponential is bounded in absolute value by $\frac{1}{100}$. We also have that $d^kq^k < K$, and so
    \[\sum_{0 \leq m \leq \frac{K}{d^kq^k}} e(mq^kd^k\beta) \asymp \frac{K}{d^kq^k},\]
    which simplifies the above expression to
    \[h_D(\alpha) = \dfrac{K}{q^k}\sum_{\substack{d \leq K^{1/(k+1)} \\ q|d}} \dfrac{\mu(d)}{d^k} + O(q^kK^{\frac{1}{k+1}}) + O(K^{1+\frac{1}{k+1}}|\beta|).\]
    The second error term is always dominated by the first, since $|\beta|\ll K^{-1}$. Since $q$ is squarefree, we may extract a factor of $q$ in the main term to write
    \[h_D(\alpha) = \frac{K\mu(q)}{q^{2k}} \sum_{\substack{d \leq N^{1/k}/q \\ (d,q) = 1}} \dfrac{\mu(d)}{d^k} + O(q^kK^{\frac{1}{k+1}}) \]
    with $\mu(q) \neq 0$.
    
    To conclude, we observe that one has that
    \[\sum_{\substack{d \leq N^{1/k}/q \\ (d,q) = 1}} \dfrac{\mu(d)}{d^k} \geq 1-\sum_{\substack{1<d \leq N^{1/k}/q \\ (d,q) = 1}}  \dfrac{1}{d^k} \geq 1-(L(k,\chi_0)-1) \gg 1\]
    where $\chi_0$ is the trivial Dirichlet character modulo $q$, and so one has the desired result whenever $q^{3k}K^{\frac{1}{k+1}} = o(K)$. This is guaranteed since we have chosen $q^k \ll K^c$ with $c< \frac{1}{2(k+1)}$.
\end{proof}

\begin{lemma}\label{crit-lwr-lemma}
    Let $\epsilon>0$. For $K \gg N^{(\frac{k+1}{2})\Delta_k+\epsilon}$, one has that
    \[\int_0^1 |S_k(\alpha;K)|^{1+\frac{1}{k}}\d\alpha \gg K^{\frac{1}{k}}\log K.\]
\end{lemma}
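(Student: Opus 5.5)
We propose to follow Keil's argument for the case $K=N$. The idea is to restrict the integral to major arcs around rationals $a/q^k$ with small $q$. On those arcs Lemma \ref{crit-h_D-pw-lemma} gives a pointwise lower bound for $h_D$, and the tail $H_D$ is negligible on average. Fix $c<\frac{1}{2(k+1)}$ and let
\[\grM = \bigcup_{\substack{q^k\le K^{c}\\ q\ \text{squarefree}}}\ \bigcup_{\substack{0\le a<q^k\\ (a,q^k)=1}} \Big\{\alpha:\ \Big|\alpha-\frac{a}{q^k}\Big|<\frac{1}{100K}\Big\}.\]
These arcs are pairwise disjoint for $K$ large, because distinct centres are separated by at least $q^{-k}q'^{-k}\ge K^{-2c}\gg K^{-1}$. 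The first step is the decomposition $S_k = h_D + H_D$ with $D=(k+1)^{-1}\log K$, followed by
\[\int_0^1|S_k|^{1+\frac1k}\d\alpha \ge \int_\grM |S_k|^{1+\frac1k}\d\alpha \gg \int_\grM |h_D|^{1+\frac1k}\d\alpha - O\Big(\int_\grM |H_D|^{1+\frac1k}\d\alpha\Big).\]
The last inequality uses $|x+y|^p\ge 2^{1-p}|x|^p-|y|^p$.

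Next we bound the main term. By Lemma \ref{crit-h_D-pw-lemma}, each arc around $a/q^k$ contributes $\gg K^{-1}(K/q^k)^{1+\frac1k}=K^{\frac1k}q^{-k-1}$. The number of admissible $a$ is $\phi(q^k)=q^{k-1}\phi(q)$, so
\[\int_\grM|h_D|^{1+\frac1k}\d\alpha \gg K^{\frac1k}\sum_{\substack{q\le K^{c/k}\\ q\ \text{squarefree}}}\frac{\phi(q)}{q^{2}}\gg K^{\frac1k}\log K.\]
The final sum is $\gg \log K$ because squarefree $q$ have positive density and $\sum \phi(q)/q^2$ over $q\le X$, restricted to squarefree $q$, is $\asymp \log X$. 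Here $c$ is fixed, so $\log K^{c/k}\asymp\log K$.

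The error term is the step I expect to be the main obstacle, since crude Parseval on all of $[0,1]$ only gives $\int_0^1|H_D|^{1+\frac1k}\d\alpha\ll K^{\frac1k}$. That bound already beats $K^{\frac1k}\log K$ by a logarithm, so it suffices. Indeed, Hölder and Corollary \ref{L2-T_i,H_i-bounds-lemma} give
\[\int_0^1|H_D|^{1+\frac1k}\d\alpha\le\Big(\int_0^1|H_D|^2\d\alpha\Big)^{\frac{k+1}{2k}} \ll \big(K\hD^{1-k}+N^{\epsilon'}K^{\frac1k}+N^{\Delta_k+\epsilon'}\big)^{\frac{k+1}{2k}}.\]
Since $K\hD^{1-k}=K^{\frac{2}{k+1}}$ and $K\gg N^{(\frac{k+1}{2})\Delta_k+\epsilon}$, the right-hand side is $\ll K^{\frac1k}$, as in the proof of Lemma \ref{crit-upr-lemma}. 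This is $o(K^{\frac1k}\log K)$, which gives the lemma.

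The delicate point is ensuring the pointwise bound of Lemma \ref{crit-h_D-pw-lemma} holds uniformly on the whole arc, with implicit constant depending only on $k$. This requires the constraint $q^k\le K^c$ with $c<\frac1{2(k+1)}$, and it forces the restriction to squarefree $q$; the restriction costs only a constant factor in the $\log K$.
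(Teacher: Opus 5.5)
Your proposal is correct and follows essentially the paper's route. You split $S_k=h_D+H_D$, apply Lemma \ref{crit-h_D-pw-lemma} on the disjoint arcs $\grM(q,a)$ with $q$ squarefree and $q^k\le K^c$, count via $\varphi(q^k)=q^{k-1}\varphi(q)$ to reach $K^{\frac1k}\sum\mu^2(q)\varphi(q)/q^2\gg K^{\frac1k}\log K$, and absorb $H_D$ through H\"older and Corollary \ref{L2-T_i,H_i-bounds-lemma}.

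The only real difference is that you restrict to the arcs first and use $|x+y|^p\ge 2^{1-p}|x|^p-|y|^p$. This avoids the paper's mean-value-theorem cross term $\int_0^1|H_D||h_D|^{\frac1k}\d\alpha$ (and hence Corollary \ref{h_i-bound-lemma}) at the cost of a harmless constant.

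One small gap in justification: positive density of squarefrees alone does not give $\sum_{q\le X}\mu^2(q)\varphi(q)/q^2\gg\log X$. You should cite an averaged lower bound such as the paper's $\sum_{q\le Q}\mu^2(q)\varphi(q)/q\ge(2/\zeta(2)-1)Q+o(Q)$, followed by partial summation.
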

\begin{proof}
    We first show that the integrand can be replaced with $|h_D(\alpha)|^{1+\frac{1}{k}}$ at little cost. To see this, observe that 
    \[|S_k(\alpha;K)|^{1+\frac{1}{k}} - |h_D(\alpha)|^{1+\frac{1}{k}} \leq (|h_D(\alpha)|+|H_D(\alpha)|)^{1+\frac{1}{k}} - |h_D(\alpha)|^{1+\frac{1}{k}}.\]
    The mean value theorem then implies that there exists $0 < \Xi(\alpha) < H_D(\alpha)$ such that
    \[(|h_D(\alpha)|+|H_D(\alpha)|)^{1+\frac{1}{k}} - |h_D(\alpha)|^{1+\frac{1}{k}} \ll |H_D(\alpha)|(|h_D(\alpha)|+\Xi(\alpha))^{\frac{1}{k}},\]
    and since the function $x^{\frac{1}{k}}$ is increasing, we may replace $\Xi(\alpha)$ with $|H_D(\alpha)|$ and see that
    \<\label{S_k-as-h_D-eq}
    \int_0^1|S_k(\alpha;K)|^{1+\frac{1}{k}}\d\alpha = \int_0^1|h_D(\alpha)|^{1+\frac{1}{k}}\d\alpha + O\Bigg( \int_0^1|H_D(\alpha)||h_D(\alpha)|^{\frac{1}{k}}\d\alpha + \int_0^1|H_D(\alpha)|^{1+\frac{1}{k}}\d\alpha\Bigg).
    \>

    We bound these two error terms using H\"older's inequality and the bounds from Corollaries \ref{h_i-bound-lemma} and \ref{L2-T_i,H_i-bounds-lemma}. It is easily seen that
    \[\int_0^1 |H_D(\alpha)|^{1+\frac{1}{k}} \d\alpha \ll \bigg(\int_0^1 |H_D(\alpha)|^2\bigg)^{\frac{k+1}{2k}} \ll (K\hD^{1-k} + N^{\epsilon'} K^{\frac{1}{k}} + N^{\Delta_k+\epsilon'})^{\frac{k+1}{2k}} \ll K^{\frac{1}{k}},\]
    where $\epsilon' = (\frac{2k}{k+1})\epsilon$. For the other error term, we again apply H\"older's inequality to show
    \begin{align*}
        \int_0^1|H_D(\alpha)||h_D(\alpha)|^{\frac{1}{k}}\d\alpha &\ll \bigg(\int_0^1 |H_D(\alpha)|^{\frac{k}{k-1}} \d\alpha\bigg)^{\frac{k-1}{k}}\bigg(\int_0^1 |h_D(\alpha)|\d\alpha\bigg)^{\frac{1}{k}} \\
        &\ll \bigg(\int_0^1 |H_D(\alpha)|^2 \d\alpha\bigg)^{\frac{1}{2}}\bigg(\int_0^1 |h_D(\alpha)|\d\alpha\bigg)^{\frac{1}{k}},
    \end{align*}
    and then again the bounds of Corollaries \ref{h_i-bound-lemma} and \ref{L2-T_i,H_i-bounds-lemma} reveal that
    \begin{align*}
        \int_0^1|H_D(\alpha)||h_D(\alpha)|^{\frac{1}{k}}\d\alpha &\ll (K\hD^{1-k} + N^{\epsilon''}K^{\frac{1}{k}} + N^{\Delta_k+\epsilon''})^{\frac{1}{2}}(\hD\log K)^{\frac{1}{k}} \\
        &\ll K^{\frac{1}{k}}(\log K)^{\frac{1}{k}} + N^{\frac{\Delta_k}{2} + \frac{\epsilon''}{2}}K^{\frac{1}{k(k+1)}}(\log K)^{\frac{1}{k}}.
    \end{align*}
    Again being careful to choose $\epsilon'' = (\frac{2}{k+1})\epsilon$, we see that this is $O((K\log K)^{\frac{1}{k}})$. Thus we have from \eqref{S_k-as-h_D-eq} that 
    \[\int_0^1|S_k(\alpha;K)|^{1+\frac{1}{k}}\d\alpha = \int_0^1|h_D(\alpha)|^{1+\frac{1}{k}}\d\alpha + O((K\log K)^{\frac{1}{k}}),\]
    and it is sufficient then to show that $\int_0^1 |h_D(\alpha)|^{1+\frac{1}{k}} \d\alpha \gg K^{\frac{1}{k}}\log K$. 
    
    For $0\leq a < q^k \leq K^{\frac{1}{4(k+1)}}$ with $(a,q^k)=1$ and $q$ squarefree, define the intervals
    \[\grM(q,a) = \bigg(\frac{a}{q^k}-\frac{1}{100K},\frac{a}{q^k} + \frac{1}{100K}\bigg).\]
    Observe that the intervals are pairwise disjoint, since if $\beta \in \grM(q,a) \cap \grM(r,b)$, then
    \[K^{-\frac{1}{2(k+1)}} \leq \bigg|\frac{ar^k-bq^k}{q^kr^k}\bigg| = \bigg|\frac{a}{q^k} - \frac{b}{r^k}\bigg| \leq \bigg|\beta - \frac{a}{q^k}\bigg| + \bigg|\beta - \frac{b}{r^k}\bigg| < \frac{1}{50}K^{-1},\]
    which is a contradiction for $K$ large enough. Thus we have that
    \[\int_0^1 |h_D(\alpha)|^{1+\frac{1}{k}} \d\alpha \geq \sum_{1\leq q \leq K^{1/(4k+4)}} \mu^2(q) \sum_{\substack{0\leq a < q^k \\ (a,q^k) = 1}}\int_{\grM(q,a)} |h_D(\alpha)|^{1+\frac{1}{k}} \d\alpha,\]
    and by Lemma \ref{crit-h_D-pw-lemma} along with the fact that $|\grM(q,a)| = \frac{1}{50K}$, this becomes
    \<\label{crit-lwr-h_D-sum-eq}
    \int_0^1 |h_D(\alpha)|^{1+\frac{1}{k}} \d\alpha \geq \frac{1}{50}K^{\frac{1}{k}}\sum_{1\leq q^k \leq K^{1/(4k+4)}} \frac{\mu^2(q)\varphi(q^k)}{q^{k+1}} \gg K^{\frac{1}{k}} \sum_{1 \leq q^k \leq K^{1/(4k+4)}} \frac{\mu^2(q)\varphi(q)}{q^2},
    \>
    where $\varphi(q)$ denotes the totient function.

    The conclusion follows from a straightforward computation which is given in \cite{keil:2013}. We may see trivially that
    \[\sum_{1 \leq q \leq Q} \frac{\mu^2(q)\varphi(q)}{q} = \sum_{1 \leq q \leq Q} \frac{\varphi(q)}{q} - \sum_{1 \leq q \leq Q} (1-\mu^2(q))\frac{\varphi(q)}{q},\]
    and applying the identity $\frac{\varphi(q)}{q} = \sum_{d|q} \frac{\mu(d)}{d}$ along with \eqref{mu_k-conv-id-eq} and the bound $\phi(q) \leq q$, we see that this is at least
    \[\frac{Q}{\zeta(2)} + o(Q) - \sum_{1 \leq q \leq Q} (1-\mu^2(q)) = \bigg(\frac{2}{\zeta(2)}-1\bigg)Q + o(Q).\]
    Thus upon applying summation by parts to the sum in \eqref{crit-lwr-h_D-sum-eq}, we may see that
    \[\int_0^1 |h_D(\alpha)|^{1+\frac{1}{k}} \d\alpha \gg K^{\frac{1}{k}}\log K,\]
    as desired.
\end{proof}

Lemmata \ref{crit-upr-lemma} and \ref{crit-lwr-lemma} together give Theorem \ref{crit-thm}. We move on to consideration of the higher moments $s>1+\frac{1}{k}$. For convenience, when $s>1+\frac{1}{k}$ we define
\[
\gamma_k(s) = \begin{cases}
    \big(\frac{1}{2}+\frac{1}{2(s-1)}\big)\Delta_k\quad &\text{if } 1+\frac{1}{k}<s < 2, \\
    \Delta_k \quad &\text{if } s\geq 2.
\end{cases}
\]

\begin{lemma}
    Let $\epsilon>0$ and $s>1+\frac{1}{k}$, and suppose $K \gg N^{\gamma_k(s) + \epsilon}$. Then one has that
    \[\int_0^1 |S_k(\alpha;K)|^s \d\alpha \asymp K^{s-1}.\]
\end{lemma}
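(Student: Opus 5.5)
The plan is to split $S_k = h_D + H_D$ with a cutoff $D$ chosen so that $\hD$ is small, and to prove the upper and lower bounds separately.

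\textbf{Upper bound.} For the upper bound we begin with
\[\int_0^1|S_k|^s \ll \int_0^1|h_D|^s + \int_0^1|H_D|^s.\]
The term $h_D$ is handled as in the complete case of \cite{keil:2013}: write $h_D=\sum_{i<D}T_i$. By the triangle inequality in $L^s$ (Minkowski) and Lemma \ref{T_i-bounds-lemma},
\[\|h_D\|_s \ll \sum_{i<D}(\hi K^{s-1})^{1/s} \ll \hD^{1/s}K^{1-1/s}.\]
This is acceptable provided $\hD$ is bounded. Since $s>1+\frac1k$, however, one can do better. Interpolate the $L^1$ bound $\hi\log K$ with the $L^2$ bound $K\hi^{1-k}$ of Corollary \ref{L2-T_i,H_i-bounds-lemma}, valid when $\hi\ll K^{1/(k+1)}$. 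For $1+\frac1k<s<2$ this gives
\[\int|T_i|^s \ll K^{s-1}\hi^{1-k(s-1)}(\log K)^{2-s},\]
which is geometrically decaying in $i$. For $s\ge 2$ we instead interpolate the $L^2$ bound with the trivial sup bound $\sup|T_i|\ll K\hi^{-k+1}$, which gives $\int|T_i|^s\ll K^{s-1}\hi^{(1-k)(s-1)}$. Either way $\sum_i\|T_i\|_s$ converges to $O(K^{1-1/s})$, and we may take $D$ large, namely $\hD = K^{1/(k+1)}$ or a small power of $K$. For the tail $H_D$ we use Lemma \ref{H_i-pw-lemma} together with Corollary \ref{L2-T_i,H_i-bounds-lemma}:
\[\int|H_D|^s \le \Big(\sup|H_D|\Big)^{s-2}\int|H_D|^2\]
when $s\ge2$, and for $s<2$ simply Hölder, $\int|H_D|^s\le(\int|H_D|^2)^{s/2}$. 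With $s<2$ and the $L^2$ bound $K\hD^{1-k}+N^{\Delta_k+\epsilon}$, we need
\[(N^{\Delta_k})^{s/2}\ll K^{s-1},\]
i.e.\ $K\gg N^{\frac{s}{2(s-1)}\Delta_k}=N^{(\frac12+\frac1{2(s-1)})\Delta_k}$, which is exactly $\gamma_k(s)$. For $s\ge2$, the sup bound $K^{1/2}N^{\Delta_k/2}$ times the $L^2$ bound $N^{\Delta_k}$ is $\ll K^{s-1}$ once $K\gg N^{\Delta_k+\epsilon}$. The terms involving $K^{1/k}$ and $\hD$ are harmless after choosing $\hD$ as a small power of $K$.

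\textbf{Lower bound.} For $s\ge2$ we first reduce to $s=2$: once $\int|S_k|^2\gg K$ is known, Hölder with the sup bound $|S_k|\le K$ gives $\int|S_k|^s\ge K^{2-s}\cdot$ hmm---this inequality goes the wrong way, so instead we use
\[\int|S_k|^2\le\Big(\int|S_k|\Big)^{\frac{s-2}{s-1}}\Big(\int|S_k|^s\Big)^{\frac1{s-1}}\]
combined with the subcritical upper bound $\int|S_k|\ll K^{1/(k+1)}$. Simpler still, use
\[\int|S_k|^2\le\Big(\int|S_k|^{s'}\Big)^{\theta}\Big(\int|S_k|^s\Big)^{1-\theta}\]
with $s'<1+\frac1k$ and the known upper bound $K^{s'/(k+1)}$. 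The exponents balance, since the complete-interval identity $K^{s-1}$ arises from a major arc near $0$. More robustly, a direct argument is available: on $|\alpha|<1/(100K)$ the arguments of Lemma \ref{crit-h_D-pw-lemma} with $q=1$ give $|h_D(\alpha)|\gg K$, and $|H_D(\alpha)|=o(K)$ there by Lemma \ref{H_i-pw-lemma}. Hence $|S_k|\gg K$ on a set of measure $\gg 1/K$, which yields $\int|S_k|^s\gg K^{s-1}$ for every $s$ with no interpolation needed. This requires $\sup|H_D|=o(K)$, which holds when $K\gg N^{\Delta_k+\epsilon}$. That is already implied by $K\gg N^{\gamma_k(s)}$, since $\gamma_k(s)\ge\Delta_k$.

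\textbf{Main obstacle.} The main obstacle is the $s\ge2$ upper bound at the threshold $K\gg N^{\Delta_k}$ alone. There the $L^2$ bound for $S_k$ itself, obtained by Parseval (the count of $k$-free integers), is $\le K$ trivially, but controlling $h_D$ and $H_D$ jointly is the delicate part. For $s=2$ we would write
\[\int|S_k|^2=\int|h_D|^2+\int|H_D|^2+2\,\mathrm{Re}\int h_D\overline{H_D},\]
and Lemma \ref{hH-mixed-moment-lemma} shows that the cross term is $o(K)$. This is precisely the mixed-moment input the paper flags, and it should be used if the direct lower-bound argument above fails to keep $|S_k|\gg K$ near $0$.
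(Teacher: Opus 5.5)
\textbf{Gap in the upper bound for $1+\frac1k<s<2$.} Interpolating the $L^1$ bound $\int_0^1|T_i|\ll \hi\log K$ with the $L^2$ bound gives, as you write,
\[\int_0^1|T_i|^s\ll K^{s-1}\hi^{1-k(s-1)}(\log K)^{2-s}.\]
The factor $(\log K)^{2-s}$ does not depend on $i$. Because the $\hi$-dependence decays geometrically, the sum over $i$ is dominated by bounded $\hi$. What you actually obtain is therefore $\int_0^1|h_D|^s\ll K^{s-1}(\log K)^{2-s}$, not $K^{s-1}$: the logarithm was silently dropped when you concluded that $\sum_i\|T_i\|_s=O(K^{1-1/s})$.

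Switching to the direct bound $\hi K^{s-1}$ of Lemma \ref{T_i-bounds-lemma} for small $i$ does not rescue this. The crossover then occurs at $\hi^{k(s-1)}\asymp(\log K)^{2-s}$, which still leaves a positive power of $\log K$. The missing step, and what the paper does, is to interpolate between the \emph{log-free} bound $\int_0^1|T_i|^{1+\delta}\ll\hi K^{\delta}$ (Lemma \ref{T_i-bounds-lemma} with exponent $1+\delta$) and the $L^2$ bound. The power of $K$ is then exactly $K^{s-1}$, and the power of $\hi$ is $1-\frac{k(s-1-\delta)}{1-\delta}$, which is negative for small $\delta$.

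\textbf{A related oversight.} You take $\int_0^1|T_i|^2\ll K\hi^{1-k}$ for all $\hi\ll K^{1/(k+1)}$. That simplification of Corollary \ref{L2-T_i,H_i-bounds-lemma} needs $K\gg N^{\frac{k+1}{2}\Delta_k+\epsilon}$, which is strictly stronger than $K\gg N^{\gamma_k(s)+\epsilon}$ when $s>1+\frac1k$. With your treatment of the tail, you also cannot shrink $\hD$ to ``a small power of $K$''. The tail needs $(K\hD^{1-k})^{s/2}\ll K^{s-1}$, i.e.\ $\hD\gg K^{\frac{2-s}{s(k-1)}}$, and this exponent tends to $\frac{1}{k+1}$ as $s\to1+\frac1k$. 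So the terms $N^{\epsilon}K^{1/k}$ and $N^{\Delta_k+\epsilon}$ must be carried through the interpolation. They produce contributions such as $N^{\Delta_k(s-1)+\epsilon}K^{\frac{2-s}{k+1}}$. You then have to check that these are $\ll K^{s-1}$ under the stated hypothesis; they are, as the paper verifies.

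\textbf{Lower bound.} Your final lower-bound argument is correct, and simpler than the paper's. Lemma \ref{H_i-pw-lemma} gives $\sup_\alpha|H_D(\alpha)|=o(K)$ once $K\gg N^{\Delta_k+\epsilon}$. Lemma \ref{crit-h_D-pw-lemma} with $q=1$ gives $|h_D(\alpha)|\gg K$ for $|\alpha|<\frac{1}{100K}$. Hence $|S_k|\gg K$ on a set of measure $\gg K^{-1}$, and $\int_0^1|S_k|^s\gg K^{s-1}$ for every $s$ at once. The paper instead proves the case $s=2$ through the mixed-moment Lemma \ref{hH-mixed-moment-lemma} and then interpolates against $(2\pm\delta)$-moment upper bounds; your pointwise argument makes both steps unnecessary.

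Two smaller remarks. The interpolation with $s'<1+\frac1k$ that you discarded would not have worked, since it yields an exponent strictly below $s-1$. And the case $s\ge2$ of the upper bound is not an obstacle: by Parseval $\int_0^1|S_k|^2\le K$, so with $|S_k|\le K$ one gets $\int_0^1|S_k|^s\le K^{s-2}\int_0^1|S_k|^2\le K^{s-1}$.
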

\begin{proof}
    It is sufficient via the trivial bound $|S_k(\alpha;K)| \leq K$ to consider the upper bound only when $s \leq 2$. Recall the decomposition
    \<\label{T_i-decomp-eq-supercrit-pf}
    \int_0^1 |S_k(\alpha;K)|^s \d\alpha \ll \int_0^1 |h_D(\alpha)|^s\d\alpha + \int_0^1 |H_D(\alpha)|^s\d\alpha,
    \>
    where $D = (k+1)^{-1}\log(K)$ as in the proof of Lemma \ref{Ls-upper-subcrit-lemma}. For the second integral in \eqref{T_i-decomp-eq-supercrit-pf}, the same argument may be applied as before; since $s\leq 2$ we may apply H\"older's inequality, Parseval's identity, and Lemma \ref{c_n-bd-lemma} to see that
    \<\label{supercrit-lemma-H_D-bd-eq}
    \int_0^1 |H_D(\alpha)|^s\d\alpha \ll \bigg(\int_0^1 |H_D(\alpha)|^2 \d\alpha\bigg)^{\frac{s}{2}} \ll K^{\frac{s}{k+1}} + N^{\frac{s\Delta_k}{2}+\epsilon}.
    \>
    The first term is $O(K^{s-1})$ whenever $s \geq 1+\frac{1}{k}$.

    Now we turn to consideration of the remaining integral on the right of \eqref{T_i-decomp-eq-supercrit-pf}. We have that
    \<\label{supercrit-lemma-low-T_i-decomp-eq}
    \int_0^1 \bigg|\sum_{i<D}T_i(\alpha)\bigg|^s \d\alpha = \int_0^1 \bigg|\sum_{i < D} i^{-1}iT_i(\alpha)\bigg|^s \d\alpha \ll \bigg(\sum_{i < D} i^{-\frac{s}{s-1}}\bigg)^{s-1} \sum_{i<D} i^s\int_0^1|T_i(\alpha)|^s\d\alpha
    \>
    by H\"older's inequality. Since $s\leq 2$, the first sum is $O(1)$. Rather than use directly the $L^s$-bound obtained in Lemma \ref{T_i-bounds-lemma}, we interpolate between the case $s=1+\delta$ of Lemma \ref{T_i-bounds-lemma} and the $L^2$-estimate of Corollary \ref{L2-T_i,H_i-bounds-lemma}. Thus we have that
    \[\int_0^1 |T_i(\alpha)|^s \d\alpha \ll \bigg(\int_0^1 |T_i(\alpha)|^{1+\delta}\d\alpha\bigg)^{\frac{2-s}{1-\delta}}\bigg(\int_0^1 |T_i(\alpha)|^2\d\alpha\bigg)^{\frac{s-1-\delta}{1-\delta}}.\]
    
    Upon applying the bounds from Lemma \ref{T_i-bounds-lemma} and Corollary \ref{L2-T_i,H_i-bounds-lemma}, we then see that this is bounded by
    \[\hi^{\frac{2-s}{1-\delta}}K^{\frac{\delta(2-s)}{1-\delta}} (K\hi^{1-k}+K^{\frac{1}{k}+\epsilon'} + N^{\Delta_k+\epsilon'})^{\frac{s-1-\delta}{1-\delta}}.\]
    Using the standard bound $|x+y|^a \ll |x|^a + |y|^a$, we expand this to
    \[\hi^{\frac{2-s}{1-\delta}}K^{\frac{\delta(2-s)}{1-\delta}} (K\hi^{1-k})^{\frac{s-1-\delta}{1-\delta}}+\hi^{\frac{2-s}{1-\delta}}K^{\frac{\delta(2-s)}{1-\delta}}(K^{\frac{1}{k}+\epsilon'})^{\frac{s-1-\delta}{1-\delta}} + \hi^{\frac{2-s}{1-\delta}}K^{\frac{\delta(2-s)}{1-\delta}}(N^{\Delta_k+\epsilon'})^{\frac{s-1-\delta}{1-\delta}}.\]
    the contribution from the first term simplifies to $K^{s-1}\hi^{1-\frac{k(s-1-\delta)}{1-\delta}}$. Since we may take $\delta$ to be arbitrarily small and $s>1+\frac{1}{k}$, we may write this as $K^{s-1}\hi^{-\phi}$ for some $\phi>0$. Similarly, since we may also take $\epsilon'>0$ arbitrarily small, the second term simplifies to
    \[K^{\frac{s-1 + \delta(k(2-s)-1)}{k(1-\delta)}}\hi^{\frac{2-s}{1-\delta}} \ll K^{\frac{s-1}{k}+\epsilon}\hi^{2-s},\]
    again using the fact that we may take $\delta$ arbitrarily small.

    Finally, the third term contributes
    \[N^{\frac{\Delta_k(s-1-\delta)}{1-\delta}+\epsilon''}K^{\frac{\delta(2-s)}{1-\delta}}\hi^{\frac{2-s}{1-\delta}} \ll N^{\Delta_k(s-1)+\epsilon}\hi^{2-s}.\]
    Applying these bounds to \eqref{supercrit-lemma-low-T_i-decomp-eq}, we have then that
    \begin{align*}
        \int_0^1 \bigg|\sum_{i<D} T_i(\alpha)\bigg|^s \d\alpha &\ll \sum_{i < D} i^s(K^{s-1}\hi^{-\phi} + K^{\frac{s-1}{k}+\epsilon}\hi^{2-s} + N^{\Delta_k(s-1)+\epsilon}\hi^{2-s}) \\
        &\ll K^{s-1} + K^{\frac{s-1+k}{k(k+1)}+\epsilon} + N^{\Delta_k(s-1)+\epsilon}K^{\frac{2-s}{k+1}}.
    \end{align*}
    Observe that since $1+\frac{1}{k}<s\leq 2$, the middle term is at most $K^{\frac{1}{k}+\epsilon} \ll K^{s-1}$. Therefore upon combining this bound with \eqref{T_i-decomp-eq-supercrit-pf} and \eqref{supercrit-lemma-H_D-bd-eq}, we can see that
    \[\int_0^1 |S_k(\alpha;K)|^s \d\alpha \ll K^{s-1}+N^{\frac{s\Delta_k}{2}+\epsilon'}+N^{\Delta_k(s-1)+\epsilon'}K^{\frac{2-s}{k+1}},\]
    and one may easily check that, upon choosing $\epsilon'>0$ small enough, the latter two error terms are dominated by $K^{s-1}$ whenever $K \gg N^{\frac{\Delta_k}{2}+\frac{\Delta_k}{2(s-1)}+\epsilon}$.

    Thus we have the upper bound for all $s>1+\frac{1}{k}$. For the lower bound, we focus first on the case $s=2$. For this case, we imitate the argument made in the proof of Lemma \ref{crit-lwr-lemma}. Observe that, upon expanding $S_k(\alpha;K) = h_D(\alpha) +H_D(\alpha)$, we have that
    \<\label{s=2-lwr-eq-1}
    \int_0^1 |S_k(\alpha;K)|^2 \d\alpha - \int_0^1 |h_D(\alpha)|^2 \d\alpha \ll \int_0^1 |h_D(\alpha)H_D(\alpha)| \d\alpha + \int_0^1 |H_D(\alpha)|^2 \d\alpha.
    \>
    The first integral on the right hand side is $o(K)$ when $K \gg N^{\Delta_k+\epsilon}$ by Lemma \ref{hH-mixed-moment-lemma}. Meanwhile, by Corollary \ref{L2-T_i,H_i-bounds-lemma}, we have that
    \[
    \int_0^1 |H_D(\alpha)|^2 \d\alpha \ll K^{\frac{2}{k+1}} + N^{\Delta_k+\epsilon'},
    \]
    and choosing $\epsilon'<\epsilon$ this is $o(K)$ whenever $K \gg N^{\Delta_k+\epsilon}$. Finally, applying Lemma \ref{crit-h_D-pw-lemma} simply for $q=1$ demonstrates that $\int_0^1 |h_D(\alpha)|^2 \d\alpha \gg K$, and so we have the result for $s=2$.
    
    The lower bound for $s \neq 2$ follows easily by convexity, though we must be slightly careful about the size constraint on $K$. Observe that $\lim_{s \rightarrow 2} \gamma_k(s) = \Delta_k$, and as such for each fixed $\epsilon>0$ we may choose $\delta>0$ sufficiently small that
    \[\int_0^1 |S_k(\alpha;K)|^{2-\delta} \d\alpha \ll K^{1-\delta}\]
    whenever $K \gg N^{\Delta_k+\epsilon}$. For $s>2$, we have that
    \[K \ll \int_0^1 |S_k(\alpha;K)|^2 \d\alpha \leq \bigg(\int_0^1 |S_k(\alpha;K)|^{2-\delta} \d\alpha\bigg)^{\frac{s-2}{s+\delta-2}}\bigg(\int_0^1 |S_k(\alpha;K)|^s\d\alpha\bigg)^{\frac{\delta}{s+\delta-2}},\]
    and so choosing $\delta>0$ sufficiently small with respect to $\epsilon$ we may deduce the lower bound for the $s$-th moment whenever $K \gg N^{\Delta_k+\epsilon}$. A similar argument interpolating the $s$-th moment and the $(2+\delta)$-th moment yields the lower bound for $s<2$.
\end{proof}

\section{The Middle Part Estimate}

Upper bounds on the sum
\[
W(y,z) := \sum_{N-K< n \leq N} |c_n(y,z)|^2 
\]
have been an essential part of $k$-free moment estimates since Balog and Ruzsa \cite{balog-ruzsa:2001}, who showed the bound
\<\label{balog-ruzsa-W-bd}
W(y,z) \ll Ky^{1-k} + N^{\frac{1}{k}}\log^3(z).
\>
The standard approach to estimating $W(y,z)$ is to expand and change the order of summation, applying the triangle inequality to see that
\<\label{W-decomp-1}
    W(y,z) \leq \sum_{N-K < n \leq N} \sum_{\substack{y < d_1,d_2 \leq z \\ d_1^k,d_2^k|n}} 1 = \sum_{y < d_1,d_2 \leq z} \sum_{\substack{N-K < n \leq N \\ [d_1,d_2]^k |n}} 1,
\>
where $[d_1,d_2]$ denotes the least common multiple of $d_1$ and $d_2$. One may then obtain \eqref{balog-ruzsa-W-bd} through estimating the inner sum as $\frac{K}{[d_1,d_2]^k} + 1$.

By the arguments in \S 3, the bound \eqref{balog-ruzsa-W-bd} shows that $\theta_{k,1} \leq \frac{k+1}{2k}$. In the case $k=2$, Sun \cite{sun:2023} further shows that a bound of the form
\<\label{sun-W-bd}
    W(y,z) \ll Ky^{-1} + N^\epsilon K^{\frac{1}{2}} + N^{\frac{12}{29}+\epsilon}y^{-\frac{10}{29}}
\>
may be achieved (though he assumes in his statement that $y \leq K^{\frac{1}{2}-\epsilon}$ so that the middle term may be ignored). The key insight of this improvement is that the trivial estimate on the inner sum of \eqref{W-decomp-1} is weak both when the greatest common divisor $(d_1,d_2)$ is small and when $d_1$ and $d_2$ are large. He applies many of the tools in our repertoire, specifically Perron's formula and a combination of the hyperbola method and van der Corput's method, to trim the error term. 

However, this approach falls short when $y$ is small. Indeed, the optimal bound attainable using only these tools is the bound
\[W(y,z) \ll Ky^{1-k} + N^\epsilon K^{\frac{1}{k}} + N^{\frac{1}{k+1}+\epsilon},\]
at least when one considers the application to the $L^1$-mean. Thus Theorem \ref{Delta-theorem} indicates that in order to show $\theta_{k,1}<\frac{1}{2}$, one must introduce new estimates.

Observe that we may sort the outer sum of \eqref{W-decomp-1} according to $(d_1,d_2)$ and reindex to see that
\[W(y,z) \leq \sum_{1 \leq h \leq z} \sum_{y/h < d_1,d_2 \leq z/h} \sum_{\substack{N-K < n \leq N \\ (d_1d_2h)^k |n}} 1.\]
With this in mind, we define
\<\label{A(H)-def-eq}
A(H) = \sum_{\hH \leq h < 2\hH} \sum_{y/h \leq d_1,d_2 < z/h} \sum_{\substack{N-K < n \leq N \\ (d_1d_2h)^k |n}} 1.
\>
To prove Lemma \ref{c_n-bd-lemma}, we will require five different bounds relating to $A(H)$. First, we observe the trivial bound
\<\label{A(H)-trivial-bd-eq}
A(H) \ll Ky^{2-2k}\hH^{k-1} + z^2\hH^{-1},
\>
which follows from the definition \eqref{A(H)-def-eq} by the bound
\[
\sum_{\substack{N-K < n \leq N \\ (d_1d_2h)^k |n}} 1 \leq \dfrac{K}{(d_1d_2h)^k} + 1.
\]

The next bound takes advantage of the multiplicative structure of the sum when $h$ is small. We essentially follow the argument found in the proof of Lemma 3.1 in \cite{sun:2023}, though we prove it for general $k$.

\begin{lemma}\label{analytic-A(H)-lemma}
    Let $\epsilon>0$. One has that
    \<\label{analytic-A(H)-bd-eq}
    A(H) \ll Ky^{2-2k}\hH^{k-1} + N^{\frac{1}{2}+\epsilon}y^{1-k}\hH^{\frac{k-1}{2}} + N^\epsilon K^{\frac{1}{2}}y^{2-k}\hH^{\frac{k}{2}-1} + N^\epsilon.
    \>
\end{lemma}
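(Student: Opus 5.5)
The plan is to follow the analytic (Perron formula) argument of Lemma 3.1 in \cite{sun:2023} with general $k$. We treat the whole dyadic block $\hH \leq h < 2\hH$ at once through one Dirichlet series. The first step is to write
\[
A(H) = \sum_{N-K<n\leq N} a(n), \qquad \sum_{n} \frac{a(n)}{n^{s}} = \zeta(s)\,P(ks), \qquad P(w) := \sum_{\hH \leq h < 2\hH}\; \sum_{y/h \leq d_1,d_2 < z/h} (d_1d_2h)^{-w}.
\]
Thus $P(w)$ is a Dirichlet polynomial in the variable $m = d_1d_2h \leq z^2\hH^{-1}$. Its coefficients $b(m)$ are supported on $m \geq y^2\hH^{-1}$ and satisfy $b(m) \ll N^\epsilon$ by the divisor bound.

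The second step applies a truncated Perron formula at height $T = N^{A}$, with $A$ a large constant, to the difference of counts up to $N$ and up to $N-K$. We then move the contour from $\mathrm{Re}\, s = 1+\epsilon$ to $\mathrm{Re}\, s = \tfrac12$. The only pole crossed is at $s=1$, where $\zeta$ has its simple pole. Its residue is $K\,P(k) \ll K\sum_{h\sim \hH} h^{-k}(y/h)^{2-2k} \ll K y^{2-2k}\hH^{k-1}$, which is the first term of \eqref{analytic-A(H)-bd-eq}. The truncation error and the horizontal segments are controlled by Lemma \ref{pw-zeta-bd-lemma} and the trivial bound for $P$ on $\mathrm{Re}\, s \geq \tfrac12$. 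With $T$ a large power of $N$ these contribute $O(N^\epsilon)$, which gives the last term.

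The third step handles the vertical integral on $\mathrm{Re}\, s = \tfrac12$, using the kernel bound
\[
\frac{N^s-(N-K)^s}{s} \ll \min\Big\{KN^{-\frac12},\, N^{\frac12}|t|^{-1}\Big\}.
\]
We split $|t|$ dyadically into ranges $|t|\sim U$. On each range we apply Cauchy--Schwarz, bounding $\int|\zeta(\tfrac12+it)|^2$ by Lemma \ref{L2-zeta-bd-lemma}. We bound $\int |P(k/2+ikt)|^2\,dt$ by the mean value theorem Lemma \ref{dirichlet-poly-bd-lemma}, after rescaling $t\mapsto kt$. This gives
\[
\int_{|t|\sim U} |P(\tfrac k2 + ikt)|^2 \d t \ll \big(U + z^2\hH^{-1}\big)\sum_m |b(m)|^2 m^{-k} \ll N^{\epsilon}\big(U + z^2\hH^{-1}\big)\, (y^2\hH^{-1})^{1-k}.
\]
For $U \leq N/K$ the kernel weight is $KN^{-1/2}$. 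The ``$U$'' part of the mean value bound then gives at most $N^{1/2+\epsilon}y^{1-k}\hH^{(k-1)/2}$. The ``length'' part $z^2\hH^{-1}$ gives $N^\epsilon K^{1/2}\,(z\hH^{-1/2})(y^2\hH^{-1})^{(1-k)/2}$. Using $z\leq N^{1/k}$ and the support of $b$, this is converted into the third term $N^\epsilon K^{1/2}y^{2-k}\hH^{k/2-1}$. For $U > N/K$ the weight is $N^{1/2}U^{-1}$. The resulting dyadic sum is geometric and is dominated by its smallest range, so it is again bounded by the second and third terms.

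I expect the main obstacle to be the bookkeeping in this third step. One must organise the sum of $|b(m)|^2 m^{-k}$ and the polynomial length so that exactly the powers $y^{1-k}\hH^{(k-1)/2}$ and $y^{2-k}\hH^{k/2-1}$ come out, rather than something lossy in $\hH$. If handling $P$ as a single polynomial loses a power of $\hH$, I would instead apply Perron for each fixed $h$, with the series $h^{-ks}D_h(ks)^2\zeta(s)$ where $D_h(w)=\sum_{y/h\le d<z/h}d^{-w}$. I would then apply Cauchy--Schwarz using the fourth moment of $D_h$, that is, the mean value of $D_h^2$, and finally sum over $h\sim\hH$, choosing whichever arrangement yields the stated exponents.
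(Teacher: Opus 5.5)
Your framework matches the paper's: Perron's formula for $\zeta(s)P(ks)$, a shift to $\Re s=\tfrac12$, the residue $KP(k)\ll Ky^{2-2k}\hH^{k-1}$, then Cauchy--Schwarz with Lemma \ref{L2-zeta-bd-lemma} and Lemma \ref{dirichlet-poly-bd-lemma}. The residue, the truncation and horizontal terms, and the second term of \eqref{analytic-A(H)-bd-eq} all come out as you say.

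\textbf{The step that fails is the third term.} With the mean value theorem in the form you quote, the length part on $|t|\sim U\geq N/K$ contributes $N^{\frac12+\epsilon}U^{-\frac12}(z^2\hH^{-1})^{\frac12}(y^2\hH^{-1})^{\frac{1-k}{2}}\ll N^\epsilon K^{\frac12}\,z\,y^{1-k}\hH^{\frac k2-1}$. This is the claimed term multiplied by $z/y$. The hypothesis $z\leq N^{1/k}$ does not remove this factor, and neither does the support condition $m\geq y^2\hH^{-1}$, which you already spent bounding $\sum_m|b(m)|^2m^{-k}$. For example, with $k=2$, $y=\hH=1$, $z=N^{1/2}$ and $K=N^{1/2}$, your estimate is $N^{3/4+\epsilon}$, whereas the right side of \eqref{analytic-A(H)-bd-eq} is $O(N^{1/2+\epsilon})$. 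Your fallback of applying Perron separately for each $h$ has the same $z/y$ loss, and it also loses $\hH^{1/2}$ in the second term, so it does not rescue the argument.

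\textbf{The missing idea is the reduction the paper makes in its first line: it suffices to treat $z=2y$.} Concretely:
\begin{itemize}
\item Split $d_1,d_2$ into dyadic blocks $d_i\sim D_i$ with $D_i\gg y/\hH$.
\item Each block is a Dirichlet polynomial of length $\asymp D_1D_2\hH$ with $\sum_m|b(m)|^2m^{-k}\ll N^\epsilon(D_1D_2\hH)^{1-k}$. Your argument then gives the bound with $y^2\hH^{-1}$ replaced by $D_1D_2\hH\gg y^2\hH^{-1}$.
\item Every term of \eqref{analytic-A(H)-bd-eq} has a non-positive exponent of $y$, so summing the $O((\log N)^2)$ blocks costs only $N^\epsilon$. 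The residues add up exactly to $KP(k)$.
\end{itemize}
Alternatively, you can use the Montgomery--Vaughan form $\int_0^T|\sum_n a_nn^{it}|^2\d t=\sum_n(T+O(n))|a_n|^2$. This turns the length part into $\sum_m|b(m)|^2m^{1-k}\ll N^\epsilon(y^2\hH^{-1})^{2-k}$ and gives the third term with no restriction on $z$.

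A smaller inaccuracy: for $U>N/K$ only the length part decays geometrically in $U$. The $U$-part is constant up to a factor $(\log U)^{1/2}$. You should instead observe that there are only $O(\log N)$ dyadic ranges (this remains true for $T=N^A$) and absorb the loss into $N^\epsilon$.
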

\begin{proof}
    It is sufficient to prove the bound when $z=2y$, as each term has a non-positive exponent in $y$. Without loss of generality, we may assume that $N$ and $N-K$ are not integers. Thus we may apply Perron's formula to see that
    \[A(H) = \dfrac{1}{2\pi i }\int_{1+\epsilon -iT_0}^{1+\epsilon+iT_0} \dfrac{N^s-(N-K)^s}{s}\zeta(s)P(ks) \d s + O(N^\epsilon),\]
    where
    \[P(s) = \bigg(\sum_{y/2\hH < d \leq 2y/\hH} d^{-s}\bigg)^2\bigg(\sum_{\hH < h \leq 2\hH} h^{-s}\bigg)\]
    and $T_0 \asymp N$. We proceed by shifting the contour to $\Re(s) = \frac{1}{2}$.

    To handle the horizontal contours, we observe that, by Lemma \ref{pw-zeta-bd-lemma},
    \[\int_{1/2 + iT_0}^{1+\epsilon+iT_0} \dfrac{N^s-(N-K)^s}{s}\zeta(s)P(ks) \d s \ll \sup_{1/2 \leq \sigma \leq 1+\epsilon}\dfrac{N^\sigma}{T_0}T_0^{\frac{1}{3}(1-\sigma)+\epsilon}\bigg(\dfrac{y^2}{\hH}\bigg)^{1-k\sigma} \ll N^\epsilon.\]
    The other horizontal contour may similarly be shown to be $O(N^\epsilon)$. The residue of the integrand at $s=1$ is $O(Ky^{2-2k}\hH^{k-1})$, and so we have that
    \<\label{analytic-prf-int-bd}
    A(H) \ll \int_{1/2+\epsilon -iT_0}^{1/2+\epsilon+iT_0} \bigg|\dfrac{N^s-(N-K)^s}{s}\zeta(s)P(ks)\bigg| \d s + Ky^{2-2k}\hH^{k-1}+N^\epsilon.
    \>

    It therefore remains to bound the integral, which we denote by $J$. Making a change of variable, we see that
    \[J \ll \int_{-T_0}^{T_0} \bigg|\dfrac{N^{1/2+it}-(N-K)^{1/2+it}}{1/2+it}\zeta\bigg(\frac{1}{2}+it\bigg)P\bigg(\frac{k}{2} + kit\bigg)\bigg|\d t.\]
    Noting that the denominator $\frac{1}{2}+it$ is bounded away from zero, we may further bound this by
    \begin{align*}
        J &\ll \int_{-T_0}^{T_0} \min\bigg\{\dfrac{K}{N^{1/2}}, \dfrac{N^{1/2}}{|t|}\bigg\}\bigg|\zeta\bigg(\frac{1}{2}+it\bigg)P\bigg(\frac{k}{2} + kit\bigg)\bigg|\d t \\
        &\ll N^{\frac{1}{2}}\Bigg(\dfrac{K}{N}\int_{-N/K}^{N/K} \bigg|\zeta\bigg(\frac{1}{2}+it\bigg)P\bigg(\frac{k}{2} + kit\bigg)\bigg|\d t +\int_{N/K}^{T_0} |t|^{-1}\bigg|\zeta\bigg(\frac{1}{2}+it\bigg)P\bigg(\frac{k}{2} + kit\bigg)\bigg|\d t\Bigg).
    \end{align*}

    We perform a dyadic decomposition of the second integral to bound it by
    \[\log(N)\sup_{N/K < T < T_0/2} \bigg(T^{-1}\int_{T}^{2T} \bigg|\zeta\bigg(\frac{1}{2}+it\bigg)P\bigg(\frac{k}{2} + kit\bigg)\bigg|\d t\bigg),\]
    and observing that the integrand is non-negative and bounded we may extend the region of integration to $[-2T,2T]$ so that the integral over $[-\frac{N}{K},\frac{N}{K}]$ is dominated by this term. Thus
    \[J \ll N^{\frac{1}{2}}\log(N) \sup_{N/K < T < T_0/2} \bigg(T^{-1}\int_{-2T}^{2T} \bigg|\zeta\bigg(\frac{1}{2}+it\bigg)P\bigg(\frac{k}{2} + kit\bigg)\bigg|\d t\bigg).\]

    We apply the Cauchy-Schwarz inequality to obtain the bound
    \[J \ll N^{\frac{1}{2}}\log(N) \sup_{N/K < T < T_0/2} T^{-1} \bigg(\int_{-2T}^{2T} \bigg|\zeta\bigg(\frac{1}{2}+it\bigg)\bigg|^2 \d t\bigg)^{\frac{1}{2}}\bigg(\int_{-2T}^{2T} \bigg|P\bigg(\frac{k}{2} + kit\bigg)\bigg|^2 \d t\bigg)^{\frac{1}{2}}.\]
    Both integrals may be directly bounded. We have by Lemma \ref{L2-zeta-bd-lemma} that
    \[\int_{-2T}^{2T} \bigg|\zeta\bigg(\frac{1}{2}+it\bigg)\bigg|^2 \d t \ll T\log(T),\]
    and we also have that
    \[\int_{-2T}^{2T} \bigg|P\bigg(\frac{k}{2} + kit\bigg)\bigg|^2 \d t \ll (T+O(y^2\hH^{-1}))\sum_{n>y^2\hH^{-1}} \bigg|\dfrac{r(n)^2}{n^k}\bigg|,\]
    by Lemma \ref{dirichlet-poly-bd-lemma}, where $r(n)$ is the number of representations of $n$ as $d_1d_2h$ for $y/\hH < d_i \leq 2y/\hH$ and $\hH < h \leq 2\hH$. Thus we have that
    \[\int_{-2T}^{2T} \bigg|P\bigg(\frac{k}{2} + kit\bigg)\bigg|^2 \d t \ll N^{\epsilon}Ty^{2(1-k)}\hH^{k-1}+N^{\epsilon}y^{2(2-k)}\hH^{k-2}.\]

    Combining this with the bound for the other integral, we obtain
    \begin{align*}
    J&\ll N^{\frac{1}{2}+\epsilon}\sup_{N/K < T < T_0/2} T^{-1} \bigg(T\log(T)\bigg)^{1/2}\bigg(Ty^{2(1-k)}\hH^{k-1}+y^{2(2-k)}\hH^{k-2}\bigg) \\
    &\ll  N^{\frac{1}{2}+\epsilon}y^{1-k}\hH^{\frac{k-1}{2}} + N^{\epsilon}K^{\frac{1}{2}}y^{2-k}\hH^{\frac{k}{2}-1}.
    \end{align*}
    Returning to \eqref{analytic-prf-int-bd}, we are done.
\end{proof}

\begin{remark}
    The term $N^\epsilon K^{\frac{1}{2}} y^{2-k}\hH^{\frac{k}{2}-1}$ is dominated by either the first or second terms in \eqref{analytic-A(H)-bd-eq} whenever $K \ll N^{\frac{k}{k+1}}$. 
\end{remark}

The remaining bounds are derived using the method of van der Corput. None of these are sufficient on their own, each taking advantage of particular sizes of $h$ and of $d_1,d_2$.

\begin{lemma}\label{vdc1-A(H)-lemma}
    Suppose $(p,q)$ is an exponent pair as in \eqref{exp-pair-def-eq}, and let $\epsilon>0$. If $z = O(y)$, then one has that
    \[A(H) \ll Ky^{2-2k}\hH^{k-1} + N^{-1}y^{2k}\hH^{1-k} +  N^{\frac{p}{p+1}+\epsilon}y^{2-\frac{2kp}{p+1}}\hH^{\frac{1+2q+2kp}{2(p+1)}-2}.\]
\end{lemma}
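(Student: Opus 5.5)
The plan is to write the innermost count exactly in terms of the sawtooth function $\psi$, keep the main term, and estimate the $\psi$-sums by Lemma \ref{sv-vdc-bd-lemma}. The one real choice is which variable to put innermost; I sum over $h$ and treat $(d_1,d_2)$ as outer parameters.

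\emph{Step 1 (exact count).} For $m=d_1d_2h$ the number of $n\in(N-K,N]$ with $m^k\mid n$ equals $\lfloor N/m^k\rfloor-\lfloor (N-K)/m^k\rfloor$. This is
\[
\frac{K}{m^k}-\psi\Big(\frac{N}{m^k}\Big)+\psi\Big(\frac{N-K}{m^k}\Big)
\]
up to the conventions at integer points, which contribute $O(1)$ only on a harmless set. Summing $K/m^k$ over $\hH\le h<2\hH$ and $y/h\le d_1,d_2<z/h$ gives the main term $Ky^{2-2k}\hH^{k-1}$, exactly as in \eqref{A(H)-trivial-bd-eq}.

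\emph{Step 2 (exponent pair in $h$).} Fix $d_1,d_2$ with $d_i\asymp y/\hH$; here the hypothesis $z=O(y)$ keeps every variable in a dyadic range, and the $h$-range $[\hH,2\hH)$ intersected with the constraints $h\ge y/d_i$, $h<z/d_i$ is an interval. For each of the two $\psi$-sums (numerator $X=N$ or $X=N-K$) I take
\[
f(x)=X(d_1d_2)^{-k}x^{-k},\qquad M=\hH .
\]
Then $|f^{(j)}(x)|\asymp_j FM^{-j}$ with $F\asymp N\hH^{k}y^{-2k}$, using $N-K\asymp N$ or treating that case analogously. When $F\ge M$, Lemma \ref{sv-vdc-bd-lemma} bounds the $h$-sum by $F^{\frac{p}{p+1}}\hH^{\frac{1+2q}{2(p+1)}+\epsilon}$. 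Summing this trivially over the $\asymp(y/\hH)^2$ pairs $(d_1,d_2)$ gives
\[
N^{\frac{p}{p+1}+\epsilon}y^{2-\frac{2kp}{p+1}}\hH^{\frac{kp}{p+1}+\frac{1+2q}{2(p+1)}-2},
\]
which is precisely the third term of the claimed bound. This computation is what confirms that $h$ should be the innermost variable.

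\emph{Step 3 (the range $F<M$).} Here the hypothesis $F\ge M$ of Lemma \ref{sv-vdc-bd-lemma} fails. I would expand $\psi$ with Lemma \ref{sawtooth-lemma}, using $R$ of size about $M/F$ or larger. Each frequency $r$ gives an exponential sum with phase derivative $rF/M$. When $rF/M\le\frac12$, Lemma \ref{Kusmin-Landau-lemma} bounds it by $\ll M/(rF)$, and the remaining frequencies are handled the same way or trivially. The condition $F<M$ reads $N<y^{2k}\hH^{1-k}$, i.e. $N^{-1}y^{2k}\hH^{1-k}>1$, so whatever this regime contributes should be expressible in terms of the factor $N^{-1}y^{2k}\hH^{1-k}$. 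The aim is to arrange the choice of $R$ and the Kusmin--Landau bounds so that the total is $\ll N^{-1}y^{2k}\hH^{1-k}$ plus terms already dominated by the first and third terms.

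I expect Step 3 to be the main obstacle. Steps 1 and 2 are mechanical once $h$ is chosen innermost, whereas in the small-$F$ range the cheap bounds do not visibly give the stated term. The naive count $\#\{h\}\cdot F^{-1}$ does not reproduce $N^{-1}y^{2k}\hH^{1-k}$. Nor does the trivial bound, which counts, for each of the $\asymp(y/\hH)^2$ pairs $(d_1,d_2)$, the $h$-interval plus the $O(F)$ integer jumps of $f$ on it. That gives roughly $y^2\hH^{-1}+Ny^{2-2k}\hH^{k-2}$, and the $y^2\hH^{-1}$ part is exactly what the exponent-pair term is supposed to beat, so it cannot simply be conceded. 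I would therefore first try the Kusmin--Landau estimate with $R\asymp M/F$ and track the outer sums carefully to see which power of $\hH$ comes out. If that does not yield $N^{-1}y^{2k}\hH^{1-k}$, I would next try reversing the roles of the variables in this regime, or interpolating the van der Corput bound with the trivial one.
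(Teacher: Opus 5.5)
Your Steps 1 and 2 are exactly the paper's argument. You split off $Ky^{2-2k}\hH^{k-1}$, keep $h$ innermost with phase $X(d_1d_2x)^{-k}$, use Lemma~\ref{sv-vdc-bd-lemma} when $F\ge\hH$, and sum trivially over the $\asymp(y/\hH)^2$ pairs to get the third term.

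The gap is Step~3, which you leave open, and as sketched it cannot be closed. With $R\asymp\hH/F$ the truncation error $\hH/R$ is already $\asymp F$. That is no better than the trivial count: for a fixed pair there are $O(1+F)$ admissible $a$, each with $O(1+K\hH/N)$ values of $h$, so $\Xi(d_1,d_2)\ll 1+F$ plus the pair's share of the main term. The frequencies with $rF/\hH\ge1$ cannot be handled by Lemma~\ref{Kusmin-Landau-lemma}, because the derivative of $rf$ is then at least $1$. Nor can they be handled trivially, because the trivial bound gives $\hH>F$ for each dyadic block of such $r$.

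The paper takes $R=\hH$ and applies Kusmin--Landau to every frequency. This gives, for each pair, $\Xi(d_1,d_2)\ll\sum_{r\le\hH}r^{-1}\cdot\hH/(rF)\ll\hH/F=N^{-1}y^{2k}\hH^{1-k}$. That is precisely your $\#\{h\}\cdot F^{-1}$, so per pair the stated term does appear. What fails is the summation over pairs, and there your suspicion is justified. The paper passes from this per-pair bound to $A(H)$ without the factor $(y/\hH)^2$ that it does include in the third term. Summed honestly, this route yields $N^{-1}y^{2k+2}\hH^{-1-k}$. Moreover, Kusmin--Landau for all $r\le\hH$ needs $F\ll1$, not merely $F<\hH$, and in that range the count is trivially $O(1)$ per pair. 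So you should not try to extract the stated middle term from the $\psi$-expansion in $h$.

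The fallback you mention at the end is what actually works. When $F<\hH$, fix $(d_1,d_2)$ and put $X_1=(N-K)(d_1d_2)^{-k}$, $X_2=N(d_1d_2)^{-k}$. Count the pairs $(h,a)$ with $\hH\le h<2\hH$ and $X_1<ah^k\le X_2$ by summing over $a$ instead of $h$. Then $a$ runs over $O_k(1)$ dyadic ranges of size $\asymp F$. The phases $(X_i/a)^{1/k}$ have size $\asymp\hH\ge F$ and $j$-th derivatives $\asymp\hH F^{-j}$, so Lemma~\ref{sv-vdc-bd-lemma} applies with the roles of the two parameters exchanged.

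This bounds the count for the pair by $\hH^{\frac{p}{p+1}}F^{\frac{1+2q}{2(p+1)}+\epsilon}$, up to two extra contributions. One is a main term $\ll K\hH^{k+1}y^{-2k}$, which is the pair's share of $Ky^{2-2k}\hH^{k-1}$. The other is boundary terms from the $a$ whose $h$-interval $((X_1/a)^{1/k},(X_2/a)^{1/k}]$ contains $\hH$ or $2\hH$; these cost $O(1)$ plus main-term size.

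Since $p\le\frac12\le\frac{1+2q}{2}$ and $F<\hH$, this bound is at most $F^{\frac{p}{p+1}}\hH^{\frac{1+2q}{2(p+1)}+\epsilon}$, the per-pair bound from Step~2. That quantity is $\gg1$ whenever the pair contributes at all: if $F<4^{-k}$ then $(d_1d_2h)^k>N$ and the count is empty. Summing over the pairs, the whole range $F<\hH$ is absorbed into the third term, and the statement follows, in fact without needing its middle term.
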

\begin{proof}
    We may assume again without loss of generality that $z=2y$. The proof follows from a relatively straightforward application of Lemmata \ref{Kusmin-Landau-lemma} and \ref{sv-vdc-bd-lemma}. Observe that we may write
    \begin{align*}
        A(H) &= \sum_{\hH \leq h < 2\hH} \sum_{y/2\hH < d_1,d_2 \leq 2y/\hH} \sum_{\frac{N-K}{(d_1d_2h)^k} < a \leq \frac{N}{(d_1d_2h)^k}} 1 \\
        &= \sum_{\hH \leq h < 2\hH} \sum_{y/2\hH < d_1,d_2 \leq 2y/\hH} \Bigg(\dfrac{K}{(d_1d_2h)^k} + \psi\bigg(\dfrac{N-K}{(d_1d_2h)^k}\bigg) - \psi\bigg(\dfrac{N}{(d_1d_2h)^k}\bigg)\Bigg).
    \end{align*}
    Separating off the first term in the sum, we then find that
    \<\label{sv-vdc-proof-xi}
    A(H) \ll Ky^{2-2k}\hH^{k-1} +  \sum_{y/2\hH < d_1,d_2 \leq 2y/\hH} \Xi(d_1,d_2),
    \>
    where
    \[\Xi(d_1,d_2) = \sum_{\hH \leq h < 2\hH}\psi\bigg(\dfrac{N-K}{(d_1d_2h)^k}\bigg) - \sum_{\hH \leq h < 2\hH}\psi\bigg(\dfrac{N}{(d_1d_2h)^k}\bigg).\]

    If $\hH < y^{\frac{2k}{k-1}}N^{-\frac{1}{k-1}}$, then we may apply Lemma \ref{sawtooth-lemma} with $R=\hH$ and Lemma \ref{Kusmin-Landau-lemma} to each of the sawtooth functions to obtain
    \<\label{Xi(d_1,d_2)-bd-1-eq}
    \Xi(d_1,d_2) \ll N^{-1}y^{2k}\hH^{1-k}.
    \>
    If instead $\hH \geq y^{\frac{2k}{k-1}}N^{-\frac{1}{k-1}}$, then we observe that for the phase functions given by $f_1(x) = (N-K)/(d_1d_2x)^k$ and $f_2(x) = N/(d_1d_2x)^k$, we have
    \[|f_1^{(j)}(x)| \asymp |f_2^{(j)}(x)| \asymp_j \bigg(\dfrac{N}{((y/\hH)^2\hH)^k}\bigg)\hH^{-j} = \bigg(\dfrac{N\hH^k}{y^{2k}}\bigg)\hH^{-j}\]
    whenever $\hH \leq h < 2\hH$. Thus we may apply Lemma \ref{sv-vdc-bd-lemma} to each of these sawtooth sums so that for any exponent pair $(p,q)$, we have
    \<\label{Xi(d_1,d_2)-bd-2-eq}
    \Xi(d_1,d_2) \ll N^{\frac{p}{p+1}}y^{\frac{-2kp}{p+1}}\hH^{\frac{1+2q+2kp}{2(p+1)}+\epsilon}.
    \>
    Applying the two bounds \eqref{Xi(d_1,d_2)-bd-1-eq} and \eqref{Xi(d_1,d_2)-bd-2-eq} to \eqref{sv-vdc-proof-xi}, we have the desired result.
\end{proof}

This bound is sufficient for the majority of our range. However, it fails just barely in the case $k=2$ when $y$ is large, owing to a large contribution from the term $N^{-1}y^{2k}\hH^{1-k}$. Since this term arises when the sawtooth sum is long, we may apply the hyperbola method to shorten the sum. This comes at a cost; the next bound is sufficient for $\hH$ and $y$ large but is rarely a suitable replacement for Lemma \ref{vdc1-A(H)-lemma}. We will only use the following lemma in the case $k=2$ and $(p,q) = (\frac{2}{7},\frac{1}{14})$.

\begin{lemma}\label{vdc1+hyperb-A(H)-lemma}
    Suppose $(p,q)$ is an exponent pair as in \eqref{exp-pair-def-eq} with $1+2q-2kp \geq 0$, and let $\epsilon>0$. If $z = O(y)$, then for any $G>0$, one has that
    \[\sum_{H>G}A(H) \ll Ky^{1-k} + KN^{\frac{1-k}{k+1}} + N^{\frac{1+2q+2p}{2(k+1)(p+1)}}(y/\hG)^{2-\frac{k(1+2q+2p)}{(k+1)(p+1)}}.\]
\end{lemma}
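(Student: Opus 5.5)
Since $\sum_{H>G}A(H)$ counts triples $(h,d_1,d_2)$ with $h\ge \hG$, $y/h\le d_i<z/h$, weighted by the number of multiples $a$ of $(d_1d_2h)^k$ in $(N-K,N]$, I will rewrite it as
\[
\sum_{H>G}A(H) \le \sum_{d_1,d_2\le z/\hG}\ \#\{(h,a):\ h\ge \max(\hG, y/d_1,y/d_2),\ N-K< a(d_1d_2h)^k\le N\}.
\]
For fixed $(d_1,d_2)$, set $m=d_1d_2$. I will then split the inner count by the hyperbola method at the point where $h\asymp a$. Since $a h^k\asymp N/m^k$, the balance point is $h\asymp a\asymp (N/m^k)^{1/(k+1)}$. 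On the range $h\le (N/m^k)^{1/(k+1)}$ I count in $a$ for each fixed $h$. On the range where $h$ is larger, $a$ is small, and I count $h$ for each fixed $a$, the condition being $h^k\in((N-K)/(am^k),N/(am^k)]$.

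In each range the count equals the main term (length over modulus) plus sawtooth differences $\psi(\cdot)$. The main terms reproduce the density $\sum K/(d_1d_2h)^k$, which gives $Ky^{1-k}$ just as in the trivial bound. There are two boundary effects. The first is the corner term from hyperbola splitting, of size the number of $a$ up to the balance point times $K/N$. The second comes from replacing $h^k$-interval lengths $(N/am^k)^{1/k}-((N-K)/am^k)^{1/k}\approx K N^{1/k-1}(am^k)^{-1/k}$. Summing these over $m$ gives the term $KN^{\frac{1-k}{k+1}}$.

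The error terms are the sums $\sum_h\psi(N/(m^kh^k))$ over $h$ up to the balance point and $\sum_a\psi((N/(am^k))^{1/k})$ over $a$ up to the balance point, both sums of length at most $M\asymp(N/m^k)^{1/(k+1)}$. For the first sum the phase has size $F\asymp N/(m^kM^k)$, and I take dyadic blocks $h\sim M'\le M$. Lemma \ref{sv-vdc-bd-lemma} applies when $F\ge M'$, which holds below the balance point. It gives a bound of $F^{p/(p+1)}M'^{(1+2q)/(2(p+1))+\epsilon}$ with $F=N/(m^kM'^k)$. The exponent of $M'$ is $(1+2q-2kp)/(2(p+1))\ge0$ by the hypothesis, so the worst block is $M'=M$. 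The same exponent computation handles the $a$-sum, where $F\asymp (N/m^k)^{1/k}a^{-1/k}$. Substituting $M=(N/m^k)^{1/(k+1)}$ gives
\[
\ll (N/m^k)^{\frac{1+2q+2p}{2(k+1)(p+1)}+\epsilon}.
\]
Finally I sum over $d_1,d_2\le z/\hG\ll y/\hG$ (using $z=O(y)$). The exponent of $m$ is $-k(1+2q+2p)/(2(k+1)(p+1))>-1$, so each one-dimensional sum is dominated by its top term. This yields $N^{\frac{1+2q+2p}{2(k+1)(p+1)}}(y/\hG)^{2-\frac{k(1+2q+2p)}{(k+1)(p+1)}}$.

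I expect the main obstacle to be two things. First, I must verify that the derivative conditions $|f^{(j)}|\asymp FM^{-j}$ and $F\ge M$ required by Lemma \ref{sv-vdc-bd-lemma} hold uniformly on every dyadic block, including near the balance point and the cutoffs $h\ge\max(\hG,y/d_i)$. Second, the small blocks where $F<M$ need separate treatment, either trivially or with Lemma \ref{Kusmin-Landau-lemma}. I will also handle the lower constraint $h\ge y/d_i$, which interacts with the hyperbola split: it truncates the $h$-range, so for fixed $a$ the $h$-sum is over an interval intersected with $[\max(\hG,y/d_i),\infty)$. That truncation only shortens sums and is absorbed into the same bounds.
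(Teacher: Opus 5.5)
Your proposal is correct and is essentially the paper's argument: a hyperbola split at $(N/(d_1d_2)^k)^{1/(k+1)}$, main terms yielding $Ky^{1-k}$ and $KN^{\frac{1-k}{k+1}}$, dyadic applications of Lemma \ref{sv-vdc-bd-lemma} to both sawtooth sums (the hypothesis $1+2q-2kp\geq 0$ puts the worst block at the balance point), and a final summation over $d_1,d_2\ll y/\hG$. The one simplification the paper makes is to overcount rather than split exactly: it bounds the inner count by the full $h$-sum up to the balance point plus the full $a$-sum up to the balance point, with no restriction on $h$, so no corner term (and no stray $O(1)$ contribution per pair $(d_1,d_2)$) ever needs to be controlled.
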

\begin{proof}
    For convenience, let $F_1 = \frac{N-K}{(d_1d_2)^k}$ and $F_2 = \frac{N}{(d_1d_2)^k}$. We begin by writing
    \<\label{hyperb-proof-eq-1}
    \sum_{H>G} A(H) \ll \sum_{1 \leq d_1 < 2y/\hG }\sum_{d_1 \leq d_2 \leq 2d_1} \sum_{\substack{F_1 < h^ka \leq F_2 \\ h \geq y/d_2}} 1 = \sum_{1 \leq d_1 < 2y/\hG}\sum_{d_1 \leq d_2 \leq 2d_1} \Xi(d_1,d_2).
    \>
    Applying Dirichlet's hyperbola method, we may see that
    \<\label{hyperb-proof-eq-2}
    \Xi(d_1,d_2) \ll \sum_{y/d_2 < h \leq F_2^{1/(k+1)}} \sum_{F_1/h^k < a \leq F_2/h^k} 1 + \sum_{a \leq F_2^{1/(k+1)}} \sum_{(F_1/a)^{1/k} <h \leq (F_2/a)^{1/k}} 1.
    \>
    
    Denote these two double sums by $\Xi_1(d_1,d_2)$ and $\Xi_2(d_1,d_2)$, respectively. Now we apply the definition of the sawtooth function to see that
    \<\label{hyperb-xi1-eq-1}
    \Xi_1(d_1,d_2) \ll \sum_{y/d_2 < h \leq F_2^{1/(k+1)}} \frac{K}{(hd_1d_2)^k} + \Bigg|\sum_{y/d_2 < h \leq F_2^{1/(k+1)}} \psi\bigg(\dfrac{F_1}{h^k}\bigg)\Bigg| + \Bigg|\sum_{y/d_2 < h \leq F_2^{1/(k+1)}} \psi\bigg(\dfrac{F_2}{h^k}\bigg)\Bigg|.
    \>
    The sawtooth sums are handled identically by Lemma \ref{sv-vdc-bd-lemma}. We perform a dyadic dissection to show that 
    \[\Bigg|\sum_{y/d_2 < h \leq F_2^{1/(k+1)}} \psi\bigg(\dfrac{F_1}{h^k}\bigg)\Bigg| \ll \log(N) \max_{y/d_2 < M \leq F_2^{1/(k+1)}} \Bigg|\sum_{M < h \leq 2M} \psi\bigg(\dfrac{F_1}{h^k}\bigg)\Bigg|,\]
    and apply Lemma \ref{sv-vdc-bd-lemma} to see that this in turn is bounded by
    \[\log(N)\max_{y/d_2 < M \leq F_2^{1/(k+1)}} F_2^{\frac{p}{p+1}+\epsilon}M^{\frac{1+2q-2kp}{2(p+1)}} \ll F_2^{\frac{1+2q+2p}{2(k+1)(p+1)}+\epsilon},\]
    since we have assumed that $1+2q-2kp \geq 0$ and $F_1 \asymp F_2$. Applying this argument to both sums in \eqref{hyperb-xi1-eq-1}, we obtain the bound
    \[\Xi_1(d_1,d_2) \ll \frac{Ky^{1-k}}{d_1^kd_2} + \bigg(\dfrac{N}{(d_1d_2)^k}\bigg)^{\frac{1+2q+2p}{2(k+1)(p+1)}+\epsilon}.\]

    The sawtooth identity applied to $\Xi_2(d_1,d_2)$, on the other hand, shows that
    \<\label{hyperb-xi2-eq-1}
    \Xi_2(d_1,d_2) \ll \sum_{a \leq F_2^{1/(k+1)}}  \bigg(\dfrac{F_2}{a}\bigg)^{\frac{1}{k}}-\bigg(\dfrac{F_1}{a}\bigg)^{\frac{1}{k}} + \Bigg|\sum_{a \leq F_2^{1/(k+1)}} \psi\bigg(\bigg(\dfrac{F_1}{a}\bigg)^{\frac{1}{k}}\bigg)\Bigg| + \Bigg|\sum_{a \leq F_2^{1/(k+1)}} \psi\bigg(\bigg(\dfrac{F_2}{a}\bigg)^{\frac{1}{k}}\bigg)\Bigg|.
    \>
    Again, we may perform a dyadic decomposition and apply Lemma \ref{sv-vdc-bd-lemma} to each sawtooth sum to see that
    \begin{align*}
        \Bigg|\sum_{a \leq F_2^{1/(k+1)}} \psi\bigg(\bigg(\dfrac{F_1}{a}\bigg)^{\frac{1}{k}}\bigg)\Bigg| &\ll \log(N) \max_{1 \leq M \leq F_2^{1/(k+1)}} \Bigg|\sum_{M < a \leq 2M} \psi\bigg(\bigg(\dfrac{F_1}{a}\bigg)^{\frac{1}{k}}\bigg)\Bigg| \\
        &\ll F_2^{\frac{1+2q+2p}{2(k+1)(p+1)} + \epsilon},
    \end{align*}
    again since we have chosen $(p,q)$ with $k+2kq-p \geq 1+2q-2kp\geq 0$. Returning to \eqref{hyperb-xi2-eq-1}, we may then observe that
    \[\Xi_2(d_1,d_2) \ll \dfrac{KN^{\frac{1-k}{k+1}}}{(d_1d_2)^{\frac{2k}{k+1}}} +\bigg(\dfrac{N}{(d_1d_2)^k}\bigg)^{\frac{1+2q+2p}{2(k+1)(p+1)}+\epsilon}.\]

    Combining this and the bound on $\Xi_1(d_1,d_2)$ with \eqref{hyperb-proof-eq-1} and \eqref{hyperb-proof-eq-2}, we have that
    \begin{align*}
        \sum_{H>G} A(H) &\ll \sum_{1 \leq d_1 < 2y/\hG}\sum_{d_1 \leq d_2 \leq 2d_1} \frac{Ky^{1-k}}{d_1^kd_2} + \dfrac{KN^{\frac{1-k}{k+1}}}{(d_1d_2)^{\frac{2k}{k+1}}} + \bigg(\dfrac{N}{(d_1d_2)^k}\bigg)^{\frac{1+2q+2p}{2(k+1)(p+1)}+\epsilon} \\
        &\ll Ky^{1-k} + KN^{\frac{1-k}{k+1}} + N^{\frac{1+2q+2p}{2(k+1)(p+1)}+\epsilon}(y/\hG)^{2-\frac{k(1+2q+2p)}{(k+1)(p+1)}}
    \end{align*}
    as desired.
\end{proof}

The final bound on $A(H)$ arrives from an application of the two-dimensional van der Corput method, applied in a manner very similar to Lemma \ref{vdc1-A(H)-lemma}. This is the key bound which allows us to eclipse the square-root barrier.

\begin{lemma}\label{vdc2-A(H)-lemma}
    Suppose $\epsilon>0$ and $z=O(y)$. One has that
    \[A(H) \ll Ky^{2-2k}\hH^{k-1} + N^{\frac{1}{4}}y^{\frac{5-2k}{4}}\hH^{\frac{k-1}{4}} + N^\epsilon y^{\frac{5}{3}}\hH^{-\frac{2}{3}} + N^{-\frac{1}{8}+\epsilon}y^{\frac{15+2k}{8}}\hH^{-\frac{7+k}{8}} + N^{-\frac{1}{4}+\epsilon}y^{2+\frac{k}{2}}\hH^{-\frac{k}{4}-1}.\]
\end{lemma}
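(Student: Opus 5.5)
The plan is to copy the proof of Lemma \ref{vdc1-A(H)-lemma}, but to treat the sawtooth sums over the pair $(d_1,d_2)$ with the two-dimensional estimate of Theorem \ref{vdc2-ref-lemma}, rather than over $h$ with the one-dimensional one. As before we may take $z=2y$. Counting the $a$ with $N-K<(d_1d_2h)^ka\le N$ gives
\[A(H)=\sum_{\hH\le h<2\hH}\sum_{y/2\hH<d_1,d_2\le 2y/\hH}\Bigg(\frac{K}{(d_1d_2h)^k}+\psi\bigg(\frac{N-K}{(d_1d_2h)^k}\bigg)-\psi\bigg(\frac{N}{(d_1d_2h)^k}\bigg)\Bigg).\]
The first part contributes $Ky^{2-2k}\hH^{k-1}$. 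It then remains to bound, for each fixed $h$ and for $X\in\{N-K,N\}$, the double sawtooth sum
\[\Sigma_h(X)=\sum_{(d_1,d_2)\in D}\psi\big(X h^{-k}d_1^{-k}d_2^{-k}\big),\]
where $D$ is the dyadic box of side $\asymp y/\hH$ (after splitting into $O(1)$ dyadic boxes).

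We apply Lemma \ref{sawtooth-lemma} with a parameter $R\ge1$, applied both to $\psi$ and to $-\psi$ (a one-sided majorant suffices for an upper bound on $|\Sigma_h|$). This gives
\[|\Sigma_h(X)|\ll \frac{(y/\hH)^2}{R}+\sum_{1\le|r|\le R}\frac{1}{|r|}\Bigg|\sum_{(d_1,d_2)\in D}e\big(rXh^{-k}d_1^{-k}d_2^{-k}\big)\Bigg|.\]
Each inner sum has phase $f(m,n)=Am^{-k}n^{-k}$ with $A=rX/h^k$, so Theorem \ref{vdc2-ref-lemma} applies with $\alpha=\beta=k$, size $F\asymp rN\hH^k y^{-2k}$, and box size $(y/\hH)^2$ in place of the theorem's $N$. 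Summing over $r$ against the weight $1/|r|$ introduces a factor $R^{1/3}$ on the $F^{1/3}$ term and only $N^\epsilon$ on the other three, since their powers of $r$ are nonpositive. We then sum over the $\asymp\hH$ values of $h$. The three $r$-independent terms become
\[N^\epsilon y^{5/3}\hH^{-2/3},\qquad N^{-1/8+\epsilon}y^{(15+2k)/8}\hH^{-(7+k)/8},\qquad N^{-1/4+\epsilon}y^{2+k/2}\hH^{-k/4-1},\]
which are exactly the last three terms of the statement.

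The remaining two contributions are the error $y^2\hH^{-1}R^{-1}$ and the main van der Corput term $R^{1/3}N^{1/3}\hH^{k/3}y^{1-2k/3}$. Balancing them suggests
\[R=N^{-1/4}y^{\frac{3}{4}+\frac{k}{2}}\hH^{-\frac{3+k}{4}},\]
and with this choice both become $N^{1/4}y^{\frac{5-2k}{4}}\hH^{\frac{k-1}{4}}$, the second term of the statement.

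The main obstacle is the bookkeeping needed to make this optimization legitimate. First, $R$ must be at least $1$. When the chosen $R$ is below $1$, we instead bound $\Sigma_h$ trivially by $(y/\hH)^2$. In that range $y^2\hH^{-1}\le N^{1/4}y^{\frac{5-2k}{4}}\hH^{\frac{k-1}{4}}$, which is just the condition $R\le1$ rearranged, so the stated bound still holds. Second, we must check that the $F^\epsilon$ factors are harmless. This holds because $F\le RN\hH^k\le N^{O(1)}$, so $F^\epsilon\ll N^{\epsilon'}$. Third, Theorem \ref{vdc2-ref-lemma} is stated for a full dyadic box, so we must handle the fact that $D$ may only be a union of $O(1)$ such boxes. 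Partial boxes at the edges can be covered by $O(1)$ boxes or absorbed with the trivial count. If some boundary range needs the summation region cut to a sub-box, we would invoke the version of Graham--Kolesnik's Theorem 6.12 that allows subregions bounded by monotone curves.
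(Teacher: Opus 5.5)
Your proposal is correct and follows the paper's proof essentially step for step. Like the paper, you separate the main term, apply Lemma \ref{sawtooth-lemma} to the double sawtooth sum over $(d_1,d_2)$ for each fixed $h$, and bound the exponential sums by Theorem \ref{vdc2-ref-lemma} with $F\asymp rN\hH^ky^{-2k}$ and box size $(y/\hH)^2$, then take $R=N^{-1/4}y^{(2k+3)/4}\hH^{-(k+3)/4}$ before summing over $h$. Your extra bookkeeping is accurate and only makes explicit what the paper leaves implicit: the trivial bound when this $R$ falls below $1$, the use of the one-sided sawtooth lemma at $-x$ to get a two-sided bound, and the control of the $F^\epsilon$ factors.
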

\begin{proof}
    As in the last argument, we write
    \[A(H) = \sum_{\hH \leq h < 2\hH} \sum_{y/\hH < d_1,d_2 \leq 2y/\hH} \Bigg(\dfrac{K}{(d_1d_2h)^k} + \psi\bigg(\dfrac{N-K}{(d_1d_2h)^k}\bigg) - \psi\bigg(\dfrac{N}{(d_1d_2h)^k}\bigg)\Bigg).\]
    Now separating off the main term again, we instead write
    \<\label{mv-vdc-xi}
    A(H) \ll Ky^{2-2k}\hH^{k-1} +  \sum_{\hH < h \leq 2\hH} \Xi_0(h),
    \>
    where
    \[\Xi_0(h) = \sum_{y/\hH \leq d_1,d_2 < 2y/\hH}\psi\bigg(\dfrac{N-K}{(d_1d_2h)^k}\bigg) - \sum_{y/\hH \leq d_1,d_2 < 2y/\hH}\psi\bigg(\dfrac{N}{(d_1d_2h)^k}\bigg).\]

    We will bound the second sum; the first sum is bounded by exactly the same reasoning. We have by Lemma \ref{sawtooth-lemma} that for any positive integer $R$,
    \[\sum_{y/\hH \leq d_1,d_2 < 2y/\hH}\psi\bigg(\dfrac{N}{(d_1d_2h)^k}\bigg) \ll (y/\hH)^2R^{-1} + \sum_{1 \leq |r| \leq R} r^{-1}\Bigg|\sum_{y/\hH \leq d_1,d_2 < 2y/\hH}e\bigg(\dfrac{rN}{(d_1d_2h)^k}\bigg)\Bigg|.\]
    Denote this exponential sum by $S$. We may then apply Theorem \ref{vdc2-ref-lemma} to see that
    \[
        S \ll r^{\frac{1}{3}}N^{\frac{1}{3}}y^{1-\frac{2k}{3}}\hH^{\frac{k}{3}-1} + N^\epsilon y^{\frac{5}{3}}\hH^{-\frac{5}{3}} + r^{-\frac{1}{8}}N^{\epsilon-\frac{1}{8}}y^{\frac{15+2k}{8}}\hH^{-\frac{15+k}{8}}
        + r^{-\frac{1}{4}}N^{\epsilon-\frac{1}{4}}y^{2+\frac{k}{2}}\hH^{-\frac{k}{4}-2}.
    \]
    Thus upon choosing $R = N^{-\frac{1}{4}}y^{\frac{2k+3}{4}}\hH^{-\frac{k+3}{4}}$, we have that
    \begin{multline*}
        \sum_{y/\hH \leq d_1,d_2 < 2y/\hH}\psi\bigg(\dfrac{N}{(d_1d_2h)^k}\bigg) \ll N^{\frac{1}{4}}y^{\frac{5-2k}{4}}\hH^{\frac{k-5}{4}} + N^\epsilon y^{\frac{5}{3}}\hH^{-\frac{5}{3}}+ N^{\epsilon-\frac{1}{8}}y^{\frac{15+2k}{8}}\hH^{-\frac{15+k}{8}} \\
        + N^{\epsilon-\frac{1}{4}}y^{2+\frac{k}{2}}\hH^{-\frac{k}{4}-2},
    \end{multline*}
    which yields the desired result upon substitution into \eqref{mv-vdc-xi}.
\end{proof}

We are finally prepared to prove the main bound on $W(y,z)$. We prove a more general result in terms of a generic exponent pair $(p,q)$, and the result follows by specifying an exponent pair.

\begin{lemma}\label{c_n-(p,q)-bd-lemma}
    Let $k \geq 2$ and $1 \leq y < z \leq N^{\frac{1}{k}}$, and let $(p,q)$ be an exponent pair as defined in \eqref{exp-pair-def-eq} satisfying $3+(4-2k)p-2q \geq 0$. Then one has that
    \<\label{W(y,z)-claimed-bound-eq}
     W(y,z) \ll Ky^{1-k} + N^\epsilon K^{\frac{1}{k}} +  N^{\frac{3(2p-2q+3)}{2((k+5)p-(4k+2)q+5k+4)}+\epsilon}.
    \>
\end{lemma}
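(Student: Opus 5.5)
We reduce to dyadic blocks and then optimize among the bounds on $A(H)$ already established. Since $W(y,z)$ is subadditive under splitting $(y,z]$ into dyadic ranges (up to a factor $\log N$ from Cauchy--Schwarz, which is absorbed into $N^\epsilon$), and the term $Ky^{1-k}$ sums geometrically over dyadic $y$, it suffices to treat $z=2y$ with $1\le y\le N^{1/k}$. By \eqref{W-decomp-1} and the sorting by $h=(d_1,d_2)$, we have $W(y,2y)\le\sum_{0\le H\le \log_2(2y)}A(H)$. Since there are only $O(\log N)$ values of $H$, it is enough to show that for each $H$ the minimum of the available bounds is at most $Ky^{1-k}+N^\epsilon K^{1/k}+N^{\eta+\epsilon}$, where $\eta$ is the claimed exponent. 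The main term $Ky^{2-2k}\hH^{k-1}$ common to all the lemmas is harmless, because $\hH\le 2y$ gives $Ky^{2-2k}\hH^{k-1}\ll Ky^{1-k}$.

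We then split according to the size of $y$. If $y\le N^{\eta/2}$, say, the trivial bound \eqref{A(H)-trivial-bd-eq} gives $z^2\hH^{-1}\ll y^2\le N^{\eta}$. For small $\hH$ relative to $y$ we use Lemma \ref{analytic-A(H)-lemma}. Its second term $N^{1/2}y^{1-k}\hH^{(k-1)/2}$ is small once $y$ is large compared with $\hH$ and $N^{1/(2(k-1))}$. Its third term is dominated by the first two in the relevant range of $K$ (see the subsequent remark); when $K$ is larger we instead absorb it into $N^\epsilon K^{1/k}$ or into $Ky^{1-k}$, by splitting according to whether $y\le K^{1/k}$ or $y>K^{1/k}$. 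For intermediate and large $\hH$ we use Lemma \ref{vdc1-A(H)-lemma} with the pair $(p,q)$ and Lemma \ref{vdc2-A(H)-lemma}. The term $N^{-1}y^{2k}\hH^{1-k}$ is at most $N^{-1}y^{2k}\le y^k\le N^{1/k}$ only crudely, so we instead combine it with the analytic bound, which is strong precisely where $\hH$ is small. The hypothesis $3+(4-2k)p-2q\ge0$ should be exactly the condition making the $\hH$-exponent in the relevant combination have the correct sign, so that the worst case occurs at an endpoint $\hH\asymp1$ or $\hH\asymp y$.

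The heart of the proof is the balancing. For each $(y,\hH)$ we bound $A(H)$ by the minimum of the analytic bound, the one-dimensional van der Corput bound, the two-dimensional van der Corput bound, and the trivial bound. We then take a weighted geometric mean (convex combination of exponents) chosen so that the exponents of $y$ and $\hH$ both vanish, leaving a pure power of $N$. Concretely, we interpolate the dominant terms $N^{1/2}y^{1-k}\hH^{(k-1)/2}$, $N^{p/(p+1)}y^{2-2kp/(p+1)}\hH^{(1+2q+2kp)/(2(p+1))-2}$, and $N^{1/4}y^{(5-2k)/4}\hH^{(k-1)/4}$, together with $y^2\hH^{-1}$ or $y^{5/3}\hH^{-2/3}$. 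Solving the resulting $2\times3$ linear system for the weights should produce $\eta=\frac{3(2p-2q+3)}{2((k+5)p-(4k+2)q+5k+4)}$; the denominator's linear form in $p,q,k$ indicates that three bounds are combined. The nonnegativity of the weights is where the constraint $3+(4-2k)p-2q\ge0$ enters.

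The main obstacle is verifying that the remaining secondary terms are never dominant in the regimes where they are used. These are $N^\epsilon y^{5/3}\hH^{-2/3}$, the $N^{-1/8}$ and $N^{-1/4}$ terms in Lemma \ref{vdc2-A(H)-lemma}, and $N^{-1}y^{2k}\hH^{1-k}$. For each we would check, using $y\le N^{1/k}$ and $\hH\le 2y$, that it is at most $N^{\eta}$, or else that it is beaten by another of the bounds at that point. We would first try to show that the $N^{-1/8}$ and $N^{-1/4}$ terms are smaller than $y^{5/3}\hH^{-2/3}$ whenever $y^2\hH^{-k}\ll N^{1/k}$, which is the natural size restriction.
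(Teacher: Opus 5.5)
Your outline works, but it is organized differently from the paper. The paper fixes $\hY$ and cuts the $H$-range at explicit points:
\begin{itemize}
\item when $K\gg N^{k/(k+1)}$, at $\hL=\hY^2K^{-1/k}$, using the analytic and trivial bounds;
\item otherwise, for $\hY<\hY_1=N^{m_1}$, at $\hL_1,\hL_2$, using Lemmas \ref{analytic-A(H)-lemma}, \ref{vdc2-A(H)-lemma}, \ref{vdc1-A(H)-lemma};
\item for $\hY\ge\hY_1$, at a single cut, using Lemmas \ref{analytic-A(H)-lemma}, \ref{vdc1-A(H)-lemma}, plus Lemma \ref{vdc1+hyperb-A(H)-lemma} when $k=2$ and $\hY\ge N^{9/22}$.
\end{itemize}
It then reads off $\eta=a_1+a_2m_1$.

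Your pointwise minimum with a weighted geometric mean is the LP dual of this, and your guess about the exponent is right. Put $M=(16-10k)p+(4k-10)q-2k+11$, the numerator of $a_2$. The weights $M:4(k-1)(1+2p-2q):3(k-1)(p+1)$ on $N^{1/2}y^{1-k}\hH^{(k-1)/2}$, $N^{1/4}y^{(5-2k)/4}\hH^{(k-1)/4}$ and the main term of Lemma \ref{vdc1-A(H)-lemma} cancel both the $y$- and $\hH$-exponents and give exactly $\eta$. For $k=2$, $(p,q)=(\frac{1}{30},\frac{11}{30})$ they are $184:40:93$, giving $\frac{105}{317}$. The three terms coincide at $\hY=\hY_1$, $\hH=\hL_2$, which is where the paper's two regimes meet.

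Two corrections to the first part of your plan:
\begin{itemize}
\item Nonnegativity of the weights is the condition $M\ge 0$ (the paper's tacit $a_2>0$), not the stated hypothesis. Since $3(3+(4-2k)p-2q)=M+2(k-1)(1+2p-2q)$, $M\ge0$ implies $3+(4-2k)p-2q\ge0$ but is strictly stronger. The stated hypothesis is what the paper uses to make the main term of Lemma \ref{vdc1-A(H)-lemma} decrease in $\hH$.
\item To absorb the third term of Lemma \ref{analytic-A(H)-lemma}, split at $y\le K^{1/k}N^{-2\epsilon/k}$ rather than $y\le K^{1/k}$; otherwise an $N^\epsilon$ lands on $Ky^{1-k}$.
\end{itemize}
What your route buys is uniformity (no split in $K$ or at $\hY_1$) and a transparent origin for $\eta$.

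The step you defer is where the paper's breakpoints do real work, and your proposed shortcut for it is false. Take $k=2$, $y=N^{1/5}$, $\hH=1$. Then $y^2\hH^{-k}\le N^{1/k}$, yet $N^{-1/8}y^{19/8}=N^{-1/4}y^{3}=N^{7/20}$, which exceeds both $y^{5/3}=N^{1/3}$ and $N^{\eta}$.

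The secondary terms are small only where Lemmas \ref{vdc2-A(H)-lemma} and \ref{vdc1-A(H)-lemma} are actually needed. Since all of them decrease in $\hH$, you can locate that region exactly:
\begin{itemize}
\item Put $c=\frac{1-2\eta}{k-1}$. The analytic bound is $\ll N^{\eta+\epsilon}$ unless $\hH>y^2N^{-c}$, which forces $y\ll N^{c}$.
\item On that region, $N^{-1}y^{2k}\hH^{1-k}\le y^2N^{-2\eta}\ll N^{2(c-\eta)}\le N^{\eta}$. The last step is equivalent to $\eta\ge\frac{2}{3k+1}$, which holds for every exponent pair.
\item The one-dimensional bound fails there only if $y<N^{m_1}$. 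In that case the three decreasing terms of Lemma \ref{vdc2-A(H)-lemma} are at most their values at $\hH=y^2N^{-c}$, $y=N^{m_1}$, namely $N^{\frac{2+(k-1)m_1}{3k+1}}$, $N^{\frac{8+(k-1)m_1}{2(5k+3)}}$ and $N^{\frac{5}{2(3k+2)}}$.
\end{itemize}
These are exactly the comparisons the paper makes at $\hY_1$; this is what its choice of $\hL_1$ encodes. You must still check them against $N^\eta$ for your pair.

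A by-product is that Lemma \ref{vdc1+hyperb-A(H)-lemma} is not needed even for $k=2$. The paper's $\hY^{2/3}$ obstruction comes from balancing against $N^{-1}y^{2k}\hH^{1-k}$ for every $\hY$. But for $y\ge N^{1-2\eta}$, Lemma \ref{analytic-A(H)-lemma} alone gives $N^{1/2+\epsilon}y^{-1/2}\le N^{\eta+\epsilon}$, and below that threshold $y^{2/3}<N^{\eta}$.
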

\begin{proof}
    We first show that it is sufficient to prove the result when $z=2y$. One must be careful here, as it is important not to pick up a logarithmic factor on the term $Ky^{1-k}$. This is dependent on the shape of the bound. Suppose that
    \<\label{last-lemma-assumed-dyadic-eq}
    W(\hY,2\hY) \ll K\hY^{1-k} + N^\epsilon K^{\frac{1}{k}} + N^{\Delta_k+\epsilon}
    \>
    for each $\hY$ in our range, and consider the complete sum $W(y,z)$ for some $1 \leq y < z\leq N^{\frac{1}{k}}$. We expand to see that
    \begin{align*} 
    W(y,z) &\leq \sum_{N-K< n \leq N} \bigg|\sum_{\log(y) \leq Y \leq \log(z)}c_n(\hY,2\hY)\bigg|^2  \\
    &\leq \sum_{\log(y)\leq Y_1,Y_2 \leq \log(z)}\sum_{N-K<n \leq N}  c_n(\hY_1,2\hY_1)c_n(\hY_2,2\hY_2).
    \end{align*}
    We then apply the Cauchy-Schwarz inequality and \eqref{last-lemma-assumed-dyadic-eq}, deducing that
    \begin{align*}
        W(y,z) &\ll \bigg(\sum_{\log(y)\leq Y \leq \log(z)} (K\hY^{1-k} + N^\epsilon K^{\frac{1}{k}} + N^{\Delta_k+\epsilon})^{\frac{1}{2}}\bigg)^2 \ll Ky^{1-k} + N^\epsilon K^{\frac{1}{k}} + N^{\Delta_k+\epsilon}.
    \end{align*}
    Thus we may reduce to the case of $W(\hY,2\hY)$.

    Assume now that $K \gg N^{\frac{k}{k+1}}$. We write
    \[W(\hY,2\hY) \ll \sum_{H < L} A(H) + \sum_{H \geq L} A(H),\]
    where $\hL = \hY^2K^{-\frac{1}{k}}$. Applying Lemma \ref{analytic-A(H)-lemma} to the first set of $A(H)$ and the trivial bound \eqref{A(H)-trivial-bd-eq} to $A(H)$ with $H \geq L$, we have that
    \[W(\hY,2\hY) \ll K\hY^{1-k} + N^{\frac{1}{2}+\epsilon}K^{\frac{1-k}{2k}} + N^\epsilon K^{\frac{1}{k}}.\]
    Since $K \gg N^{\frac{k}{k+1}}$, the middle term is $O(N^{\epsilon}K^{\frac{1}{k}})$.

    The case in which $K \ll N^{\frac{k}{k+1}}$ is more computationally intensive, although the underlying principle is the same. We use the various bounds for $A(H)$ to construct bounds for $W(\hY,2\hY)$ which are effective in various ranges. The first bound is obtained by applying Lemmata \ref{analytic-A(H)-lemma}, \ref{vdc1-A(H)-lemma}, and \ref{vdc2-A(H)-lemma}. 

    We write
    \[W(\hY,2\hY) \ll \sum_{H<L_1} A(H) + \sum_{L_1 \leq H < L_2} A(H) + \sum_{H \geq L_2} A(H) =: S_1 +S_2 + S_3.\]
    Lemma \ref{analytic-A(H)-lemma} applied to $S_1$ gives again that
    \[S_1 \ll K\hY^{1-k} + N^{\frac{1}{2}+\epsilon}\hY^{1-k}\hL_1^{\frac{k-1}{2}},\]
    while Lemma \ref{vdc2-A(H)-lemma} applied to $S_2$ yields
    \[S_2 \ll K\hY^{1-k} + N^{\frac{1}{4}}y^{\frac{5-2k}{4}}\hL_2^{\frac{k-1}{4}} + N^\epsilon y^{\frac{5}{3}}\hL_1^{-\frac{2}{3}} + N^{-\frac{1}{8}+\epsilon}y^{\frac{15+2k}{8}}\hL_1^{-\frac{7+k}{8}} + N^{-\frac{1}{4}+\epsilon}y^{2+\frac{k}{2}}\hL_1^{-\frac{k}{4}-1}\]
    and
    \[S_3 \ll K\hY^{1-k} + N^{-1}\hY^{2k}\hL_2^{1-k} +  N^{\frac{p}{p+1}+\epsilon}\hY^{2-\frac{2kp}{p+1}}\hL_2^{\frac{1+2q+2kp}{2(p+1)}-2}\]
    by Lemma \ref{vdc1-A(H)-lemma}. Choosing
    \[\hL_1 = \min\bigg\{N^{-\frac{3}{3k+1}}\hY^{\frac{6k+14}{3k+1}}, N^{-\frac{5}{5k+3}}\hY^{\frac{10k+7}{5k+3}}, N^{-\frac{3}{3k+2}}\hY^2\bigg\}\]
    and
    \[\hL_2 = N^{\frac{3p-1}{(7-3k)p-4q+k+5}}\hY^{\frac{(3-6k)p+2k+3}{(7-3k)p-4q+k+5}},\]
    we may deduce that
    \<\label{W(Y,2Y)-up-bd-final,Y<Y_1}
    W(\hY,2\hY) \ll K\hY^{1-k} + \hY^{\frac{2}{3}+\epsilon} + N^{a_1+\epsilon}\hY^{a_2} + N^{\frac{2}{3k+1}+\epsilon}\hY^{\frac{k-1}{3k+1}} + N^{\frac{4}{5k+3}+\epsilon}\hY^{\frac{k-1}{2(5k+3)}} + N^{\frac{5}{2(3k+2)}+\epsilon},
    \>
    where
    \<\label{a_1-a_2-def-eq}
    a_1 = \dfrac{2p-2q+3}{2((7-3k)p-4q+k+5)}, \quad a_2 = \dfrac{(16-10k)p+(4k-10)q-2k+11}{2((7-3k)p-4q+k+5)}.
    \>

    The bound \eqref{W(Y,2Y)-up-bd-final,Y<Y_1} is most effective when $\hY$ is small. In fact, one finds that the use of Lemma \ref{vdc2-A(H)-lemma} is ineffective when 
    \[\hY \gg N^{\frac{2p-2q+3}{(k+5)p-(4k+2)q+5k+4}},\]
    as this is where $\hL_1 = \hL_2$. Thus we also make use of a bound constructed using only Lemmata \ref{analytic-A(H)-lemma} and \ref{vdc1-A(H)-lemma}. This results in the bound
    \begin{align*}
        W(\hY,2\hY) &\ll \sum_{H < L} A(H) + \sum_{H \geq L} A(H) \\
        &\ll K\hY^{1-k} + N^{\frac{1}{2}+\epsilon}\hY^{1-k}\hL^{\frac{k-1}{2}} + N^{-1}\hY^{2k}\hL^{1-k} + N^{\frac{p}{p+1}+\epsilon}\hY^{2-\frac{2kp}{p+1}}\hL^{\frac{1+2q+2kp}{2(p+1)}-2}.
    \end{align*}
    Since we have assumed $3+(4-2k)p-2q \geq 0$, the exponent of $\hL$ in the last term is negative. Thus upon taking $\hL = \max\{N^{-\frac{1}{k-1}}\hY^{\frac{6k-2}{3k-3}}, N^{\frac{p-1}{(3-k)p-2q+k+2}}\hY^{\frac{2(k+1)(p+1)-4kp}{(3-k)p-2q+k+2}}\}$, we deduce
    \<
    W(\hY,2\hY) \ll K\hY^{1-k} + \hY^{\frac{2}{3}+\epsilon} + N^{\frac{2p-2q+3}{2((3-k)p-2q+k+2)}}\hY^{\frac{(k-1)(2q-2p-1)}{(3-k)p-2q+k+2}}. \label{W(Y,2Y)-up-bd-final,Y_1<Y<Y_2}
    \>
    We observe that the factor of $\hY$ in the last term of \eqref{W(Y,2Y)-up-bd-final,Y_1<Y<Y_2} will have a negative exponent for any exponent pair.

    The bounds \eqref{W(Y,2Y)-up-bd-final,Y<Y_1} and \eqref{W(Y,2Y)-up-bd-final,Y_1<Y<Y_2} are sufficient for all but the case $k=2$, where the term $\hY^{\frac{2}{3}+\epsilon}$ is too large. To surpass this barrier, we instead combine Lemmata \ref{analytic-A(H)-lemma} and \ref{vdc1+hyperb-A(H)-lemma}, which gives a bound only effective for $\hY$ very close to $N^{\frac{1}{2}}$. Assuming $k=2$, we apply Lemma \ref{analytic-A(H)-lemma} and Lemma \ref{vdc1+hyperb-A(H)-lemma} with $(p,q) = (\frac{2}{7},\frac{1}{14})$ to deduce that
    \[
        W(\hY,2\hY) \ll \sum_{H < L} A(H) + \sum_{H \geq L} A(H) \ll K\hY^{-1} + N^{\frac{1}{2}+\epsilon}\hY^{-1}\hL^{\frac{1}{2}} + KN^{-\frac{1}{3}} + N^{\frac{2}{9}}(\hY/\hL)^{\frac{10}{9}}
    \]
    and upon choosing $\hL = N^{-\frac{5}{29}}\hY^{\frac{38}{29}}$ we may deduce that
    \<\label{W(Y,2Y)-up-bd-final,Y>Y_2}
    W(\hY,2\hY) \ll K\hY^{-1} + KN^{-\frac{1}{3}} + N^{\frac{12}{29}+\epsilon}\hY^{-\frac{10}{29}}.
    \>
    Since we have assumed $K \ll N^{\frac{k}{k+1}} = N^{\frac{2}{3}}$, the middle term is $O(N^\epsilon K^{\frac{1}{2}})$, and so we recover the bound \eqref{sun-W-bd} from \cite{sun:2023}.

    The proof is concluded by applying the bounds \eqref{W(Y,2Y)-up-bd-final,Y<Y_1} and \eqref{W(Y,2Y)-up-bd-final,Y_1<Y<Y_2}, along with \eqref{W(Y,2Y)-up-bd-final,Y>Y_2} in the case $k=2$, to appropriate ranges of $\hY$. In particular, for each $k$ we set $\hY_1 = N^{m_1}$, where 
    \[m_1 = \dfrac{2p-2q+3}{(k+5)p-(4k+2)q+5k+4},\]
    and we note that for $a_1,a_2$ as defined in \eqref{a_1-a_2-def-eq}, one has
    \[a_1+a_2m_1 = \frac{3(2p-2q+3)}{2((k+5)p-(4k+2)q+5k+4)},\]
    which is the desired exponent in the bound \eqref{W(y,z)-claimed-bound-eq}.
    For $k>2$, we apply \eqref{W(Y,2Y)-up-bd-final,Y<Y_1} if $\hY < \hY_1$ and \eqref{W(Y,2Y)-up-bd-final,Y_1<Y<Y_2} if $\hY \geq \hY_1$. Noting that each of the terms in the bound \eqref{W(Y,2Y)-up-bd-final,Y<Y_1} besides $K\hY^{1-k}$ have positive exponents on $\hY$, we have that for $\hY<\hY_1$,
    \[
        W(\hY,2\hY) \ll K\hY^{1-k} + \hY_1^{\frac{2}{3}+\epsilon} + N^{a_1+\epsilon}\hY_1^{a_2} + N^{\frac{2}{3k+1}+\epsilon}\hY_1^{\frac{k-1}{3k+1}} + N^{\frac{4}{5k+3}+\epsilon}\hY_1^{\frac{k-1}{2(5k+3)}} + N^{\frac{5}{2(3k+2)}+\epsilon},
    \]
    and upon substituting the definition of $\hY_1$ this becomes
    \begin{multline*}
        W(\hY,2\hY) \ll K\hY^{1-k} + N^{\frac{2}{3}m_1+\epsilon} + N^{a_1+a_2m_1+\epsilon} \\
        + N^{\frac{2}{3k+1}+\frac{k-1}{3k+1}m_1+\epsilon} + N^{\frac{4}{5k+3}+\frac{k-1}{2(5k+3)}m_1 + \epsilon} + N^{\frac{5}{2(3k+2)}+\epsilon}.
    \end{multline*}
    Comparing these terms, it is easy to verify that the term $N^{a_1+a_2m_1+\epsilon}$ dominates the other terms of that shape. Thus the above bound simplifies to
    \<\label{W(Y,2Y)-final-eq-1}
    W(\hY,2\hY) \ll K\hY^{1-k} + N^{a_1+a_2m_1+\epsilon} \qquad (\hY<\hY_1).
    \>

    Similarly, recalling that the third term in \eqref{W(Y,2Y)-up-bd-final,Y_1<Y<Y_2} is negative in the exponent of $\hY$, we may deduce that for $\hY \geq \hY_1$ one has
    \begin{align*}
        W(\hY,2\hY) &\ll K\hY^{1-k} + \hY^{\frac{2}{3}+\epsilon} + N^{\frac{2p-2q+3}{2((3-k)p-2q+k+2)}}\hY_1^{\frac{(k-1)(2q-2p-1)}{(3-k)p-2q+k+2}} \\
        &\ll K\hY^{1-k} + \hY^{\frac{2}{3}+\epsilon} + N^{a_1+a_2m_1+\epsilon}. \stepcounter{equation}\tag{\theequation}\label{W(Y,2Y)-almost-final-eq}
    \end{align*}
    Provided $k>2$, the second term is dominated by the third, and so
    \<\label{W(Y,2Y)-final-eq-2}
    W(\hY,2\hY) \ll K\hY^{1-k} + N^{a_1+a_2m_1+\epsilon} \qquad (\hY \geq \hY_1).
    \>
    Together \eqref{W(Y,2Y)-final-eq-1} and \eqref{W(Y,2Y)-final-eq-2} prove that \eqref{W(y,z)-claimed-bound-eq} holds for $k>2$.

    In the case $k=2$, we set $\hY_2 = N^{\frac{9}{22}}$, and observe that \eqref{W(Y,2Y)-almost-final-eq} becomes
    \<\label{W(Y,2Y)-k=2-eq-1}
    W(\hY,2\hY) \ll K\hY^{1-k} + N^{\frac{3}{11}+\epsilon} + N^{a_1+a_2m_1+\epsilon} \qquad (\hY < \hY_2).
    \>
    Similarly, we may see that \eqref{W(Y,2Y)-up-bd-final,Y>Y_2} becomes
    \<\label{W(Y,2Y)-k=2-eq-2}
    W(\hY,2\hY) \ll K\hY^{-1} + N^\epsilon K^{\frac{1}{2}} + N^{\frac{3}{11}+\epsilon} \qquad (\hY \geq \hY_2).
    \>
    Upon observing that $N^{\frac{3}{11}+\epsilon} \ll N^{a_1+a_2m_1+\epsilon}$ for all exponent pairs $(p,q)$, the bounds \eqref{W(Y,2Y)-k=2-eq-1} and \eqref{W(Y,2Y)-k=2-eq-2} together imply \eqref{W(y,z)-claimed-bound-eq}, and we at last have the desired result for all $k \geq 2$.
\end{proof}

\begin{proof}[Proof of Lemma \ref{c_n-bd-lemma}]
    For $k=2$ we apply Lemma \ref{c_n-(p,q)-bd-lemma} with the exponent pair $(\frac{1}{30},\frac{11}{30})$. For $k>2$, the specific values of $\delta_k$ are obtained by using the exponent pairs
    \[\bigg(\frac{2}{k^2(k+3)}, \frac{1}{2}-\frac{3k+1}{k(k+1)(k+3)}\bigg),\]
    which are newly available due to the resolution of the main conjecture of Vinogradov's mean value theorem (see \cite{wooley:2016} and \cite{bourgain-demeter-guth:2016}). These exponent pairs are derived by Heath-Brown in \cite{heath-brown:2017}, though one must be careful to observe that our definition of exponent pairs differs from that used in \cite{heath-brown:2017}.
\end{proof}

We conclude with a remark that almost certainly one may improve this result slightly with more carefully chosen exponent pairs. This is the case for example when $k=3$, where we have shown that $\theta_{3,1} \leq \frac{381}{769} \approx 0.49545$, but the exponent pair $(\frac{1}{14},\frac{2}{7})$ applied to Lemma \ref{c_n-(p,q)-bd-lemma} results in a slightly better bound of $\frac{54}{109} \approx 0.49542$. However, these improvements are not significant (indeed, even with the exponent pairs conjecture, Lemma \ref{c_n-(p,q)-bd-lemma} cannot show $\theta_{k,1} \leq \frac{9}{20}$ for any $k$), and so we opt for a unified approach.

\section*{Acknowledgements}

The author is grateful to his advisor Trevor Wooley for his constant support and suggestions. He would also like to thank Anurag Sahay and Atal Bhargava for helpful discussions. This work was partially supported by NSF grants DMS-2001549 and DMS-2502625, under the supervision of Trevor Wooley.

\bibliography{refs}

\end{document}